\newtheorem{theorem}{Theorem}
\newtheorem{lemma}{Lemma}
\newtheorem{corollary}{Corollary}
\newtheorem{remark}{Remark}
\newtheorem{proposition}{Proposition}
\newtheorem{rhp}{RH Problem}
\numberwithin{equation}{section}
\begin{document}

\begin{frontmatter}
\title{   Existence of global solutions for the nonlocal derivation nonlinear Schr\"{o}dinger equation by the inverse scattering transform method}

\author[inst2]{Yuan Li}
\author[inst2]{Xinhan Liu}
\author[inst2]{Engui Fan$^{*,}$  }

\address[inst2]{ School of Mathematical Sciences and Key Laboratory of Mathematics for Nonlinear Science, Fudan University, Shanghai, 200433, China\\
* Corresponding author and e-mail address: faneg@fudan.edu.cn  }

\begin{abstract}
  We address the existence of global solutions to the initial value problem for the integrable nonlocal derivative nonlinear Schr\"{o}dinger equation in weighted Sobolev space $H^{2}(\mathbb{R})\cap H^{1,1}(\mathbb{R})$.
The key to prove this result is  to establish a bijectivity between potential and reflection coefficient  by  using the inverse scattering transform method in the form of the Riemann-Hilbert problem.

\end{abstract}

\begin{keyword}
  %% keywords here, in the form: keyword \sep keyword
  Nonlocal derivation nonlinear Schr\"{o}dinger equation\sep Global solutions\sep Cauchy operator\sep Inverse scattering transform.
  %% PACS codes here, in the form: \PACS code \sep code

  \textit{Mathematics Subject Classification:}
  %% MSC codes here, in the form: \MSC code \sep code
  %% or \MSC[2008] code \sep code (2000 is the default)
  % \MSC 0000 \sep 1111
35P25; 35Q51; 35Q15; 35A01; 35G25.
  \end{keyword}
\end{frontmatter}
\tableofcontents
    % \baselineskip=18pt
% \maketitle

\section{Introduction}
% \hspace*{\parindent}

It is well known that the derivative nonlinear Schr\"{o}dinger (DNLS) equation \cite{Kaup}
 \begin{equation}\label{DNLS}
u_t(x,t)=iu_{xx}(x,t)+\sigma(u^2(x,t)\overline{u}(x,t))_x,\ \sigma=\pm 1
\end{equation}
is one of  important integrable systems.
In particular, the problem of global well-posedness for the DNLS equation \eqref{DNLS} has been extensively studied in the last two decades. For example, Hayashi, Ozawa \cite{Hay} and Wu, Guo \cite{Wu1,Wu2,Guo}
proved that the global solutions to the DNLS equation exists in Sobolev space if the initial data satisfy the smallness condition. The inverse scattering transform (IST) method to global well-posedness of the DNLS equation has advanced considerably in recent years.
Jenkins and Liu et al. \cite{Liu1,Jen1,Jen2,Jen3}
have used this techniques to prove the global well-posedness of the DNLS equation for any initial data in a weighted Sobolev space. Pelinovsky and Shimabukoro \cite{Pel,Pel1} constructed a unique global solution of the DNLS equation in a different weighted Sobolev space by using the IST method and B\"{a}cklund transformation.
  Bahouri and Perelman \cite{Bah1,Bah2} proved the global well-posedness of the DNLS equation in a very weak space with the help of the IST techniques. Their work has almost completely settled the problem of the global well-posedness of the DNLS equation.

In order to extend the equation (\ref{DNLS}) into a nonlocal case,  considering  a  linear system \cite{Abl}
\begin{equation}\label{1.9}
\Phi_x=-ik^2\sigma_3\Phi+kQ(u)\Phi,
\end{equation}
\begin{equation}\label{1.10}
\Phi_t=-2ik^{4}\sigma_{3}\Phi+\tilde{Q}(u)\Phi,
\end{equation}
where
\begin{align}
&\sigma_3=\begin{bmatrix}
    1&0\\
    0&-1
    \end{bmatrix}, \ \ \ \
Q(u)=\begin{bmatrix}
    0&u(x,t)\\
    v(x,t)&0
    \end{bmatrix},\nonumber\\
&\tilde{Q}(u)=\begin{bmatrix}
k^2uv&-i(2ik^3u-ku_x+iku^2v)\\
-i(2ik^3v+kv_x+ikuv^2)&-k^2uv
\end{bmatrix},\nonumber
\end{align}
Ablowitz   et al.  recently  found that under the symmetry reduction
$
v(x,t)=\sigma\overline{u}(x,t),\ \  \sigma=\pm1,
$
the compatibility condition of system \eqref{1.9} and \eqref{1.10} leads to the   DNLS equation (\ref{DNLS}); 
under the symmetry reduction
\begin{equation}\label{sr}
v(x,t)=i\sigma\overline{u}(-x,t),\ \  \sigma=\pm1,
\end{equation}
the compatibility condition of system \eqref{1.9} and \eqref{1.10} yields the new nonlocal  derivation nonlinear Schr\"{o}dinger (nDNLS) equation
\begin{align}
&u_t(x,t)=iu_{xx}(x,t)+i\sigma(u^2(x,t)\overline{u}(-x,t))_x,\ \  x\in\mathbb{R},\label{1.1}
\end{align}
where   $\sigma=\pm1,$  $u(x,t)$ is a complex valued function and $\overline{u}$ represents the conjugate complex number of $u$.
Unlike the well-known rich results on  the DNLS equation, there still a little work  related to the nDNLS equation \eqref{1.1}.  Ablowitz   et al.    constructed the   inverse scattering transform 
for the equation \eqref{1.1}  \cite{Abl}.
All possible nonlocal versions of the DNLS equations were derived by the nonlocal reduction from the Chen-Lee-Liu equation, the Kaup-Newell equation and the Gerdjikov-Ivanov equation  \cite{Shi}.
Zhou introduced a nonlocal version
of the conventional DNLS equation and derived the explicit expressions of solutions by Darboux transformations \cite{Zhou} .
Chen   et al.  showed the global existence and uniqueness of the mild solution in super-critical function spaces for a Chen-Lee-Liu version of the nonlocal DNLS equation   \cite{Chen}.
However  the existence of global classical solutions to the nDNLS equation \eqref{1.1} is still unknown.
The essential difficulty by using analytical method
to prove this result  is that   the conservation law and the integrable structures cannot formulate useful norm structure for the   nDNLS equation \eqref{1.1}  compared to DNLS equation \eqref{DNLS}.

In this paper we  consider  the Cauchy problem for the  nDNLS  equation \eqref{1.1} with weighted Sobolev initial data
\begin{align}
&u(x,0)=u_0(x) \in H^{2}(\mathbb{R})\cap H^{1,1}(\mathbb{R}).\label{1.2}
\end{align}
We   prove the existence of  global solutions to the Cauchy problem  \eqref{1.1}-\eqref{1.2}    under    the appropriate small  initial data condition (see \eqref{small+} below).
  The proof is achieved by using the IST method in the form of the Riemann-Hilbert (RH) problem. By using IST method,
we obtain a strong solution for nDNLS equation \eqref{1.1} compared to the mild weak solution in integral form for a Chen-Lee-Liu version of the nonlocal DNLS equation obtained in \cite{Chen}.
Our main results are as follows 

\begin{theorem}\label{t1}
For any initial date $u_{0}\in H^{2}(\mathbb{R})\cap H^{1,1}(\mathbb{R})$ and satisfies the small norm restriction \eqref{small+},
then there exists a unique global solution $u \in C(\mathbb{R}, H^{2}(\mathbb{R})\cap H^{1,1}(\mathbb{R}))$ to the Cauchy problem \eqref{1.1}-\eqref{1.2}. Moreover, the map $u_{0}\mapsto u $ is Lipschitz continuous from $H^{2}(\mathbb{R})\cap H^{1,1}(\mathbb{R})$ to $C(\mathbb{R}, H^{2}(\mathbb{R})\cap H^{1,1}(\mathbb{R}))$.
\end{theorem}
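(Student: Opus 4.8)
The plan is to prove Theorem \ref{t1} by realizing the inverse scattering transform as a composition of three maps whose Lipschitz continuity on the relevant weighted spaces is established separately: the direct scattering map $u_0\mapsto r$, the explicit linear time flow $r(k,0)\mapsto r(k,t)$, and the inverse scattering map $r(\cdot,t)\mapsto u(\cdot,t)$. The central assertion, as announced in the abstract, is the bijectivity of the correspondence between the potential and the reflection coefficient, together with quantitative two-sided Lipschitz bounds. Once this is in place, the global solution is produced by transporting the scattering data along the trivial flow and inverting, and the Lipschitz dependence $u_0\mapsto u$ follows by composing the three Lipschitz maps; the smallness hypothesis \eqref{small+} enters precisely to keep the entire construction inside the regime where the direct and inverse maps are well defined and the flow is global.

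First I would carry out the direct scattering analysis for the Kaup--Newell-type spectral problem \eqref{1.9}. I would construct the Jost solutions normalized at $x=\pm\infty$ via their Volterra integral equations, establish their analyticity in the appropriate regions of the spectral plane (noting that $k$ enters quadratically, so the natural variable is $k^2$ and the analyticity regions are quadrants), and control their dependence on $u_0$. The nonlocal reduction \eqref{sr}, namely $v(x,t)=i\sigma\overline{u}(-x,t)$, must then be translated into symmetry relations on the Jost functions and on the scattering matrix; because it couples the point $x$ with $-x$, it replaces the usual Hermitian/Schwarz symmetry of the local DNLS problem by a parity-reflected one, and I expect this to be the most delicate bookkeeping of the step. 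From the scattering matrix I define the reflection coefficient $r(k)$ and prove that $u_0\mapsto r$ is Lipschitz from $H^2(\mathbb{R})\cap H^{1,1}(\mathbb{R})$ into a weighted Sobolev space of reflection coefficients on the cross $\mathbb{R}\cup i\mathbb{R}$. The smallness condition \eqref{small+} is used here to exclude discrete spectrum, so that $r$ alone constitutes a complete set of scattering data and the associated Riemann--Hilbert problem carries no poles.

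Next I would record the time evolution: differentiating the scattering relations using the $t$-part \eqref{1.10} shows that $r$ merely acquires an explicit oscillatory factor of the form $r(k,t)=r(k,0)\,e^{-4ik^{4}t}$. On the contour $\mathbb{R}\cup i\mathbb{R}$ this factor is unimodular, so it preserves the $L^2\cap L^\infty$ size of $r$ that governs the contraction estimate below; only the weighted $k$-derivative norm grows, at a controlled polynomial rate in $t$ (coming from $\partial_k e^{-4ik^{4}t}\sim t\,k^{3}$), which is exactly what yields a global-in-time but possibly unbounded solution in $C(\mathbb{R},H^{2}\cap H^{1,1})$. I would then set up the inverse problem as a matrix Riemann--Hilbert problem on $\mathbb{R}\cup i\mathbb{R}$ with jump built from $r(\cdot,t)$, and reduce its solvability to the invertibility of $I-\mathcal{C}_{r}$, where $\mathcal{C}_{r}$ is the Beals--Coifman (Cauchy) operator attached to the jump; for small $r$ this is a Neumann-series/contraction argument, where the small-norm hypothesis is again exploited. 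Reconstructing $u(x,t)$ from the $1/k$ term in the large-$k$ expansion of the RH solution, I would prove that the reconstructed potential lies in $H^{2}(\mathbb{R})\cap H^{1,1}(\mathbb{R})$ and that $r\mapsto u$ is Lipschitz, using weighted $L^2$ estimates on $\mathcal{C}_{r}$ and on its $x$- and $k$-derivatives.

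The main obstacle, flagged already in the introduction, is that the nonlocal equation \eqref{1.1} carries no coercive conservation law controlling the $H^{2}\cap H^{1,1}$ norm, so, unlike the local DNLS, one cannot propagate an a priori bound by energy methods; this is what forces the small-data hypothesis and makes the Lipschitz control of the two scattering maps indispensable rather than merely convenient. Concretely, the hardest estimates will be (i) propagating two derivatives through the reconstruction, that is, deducing $\partial_x^{2}u\in L^{2}$ from the reflection data, which requires bounding second $x$-derivatives of the Cauchy operator and exploiting the $k^{2}$-structure of the phase, and (ii) verifying that the parity-reflected symmetry forced by \eqref{sr} is compatible with, and preserved by, both the construction of $r$ and the solvability of the RH problem, so that the reconstructed $u$ genuinely solves \eqref{1.1} rather than only the unreduced system. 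Uniqueness then follows from the uniqueness of the RH solution in the small-norm class, and the theorem is obtained by composing the three Lipschitz maps.
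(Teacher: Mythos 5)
Your overall architecture --- direct map $u_0\mapsto$ scattering data, explicit unimodular time factor, inverse map via a Riemann--Hilbert problem, globality and Lipschitz dependence by composing the three maps --- is exactly the paper's strategy, and your time evolution $e^{\mp 4ik^4t}$ and the linear-in-$t$ growth of the weighted derivative norm match Lemma \ref{p10}. But there are two genuine gaps. First, you plan around a \emph{single} reflection coefficient $r$, with the nonlocal symmetry \eqref{sr} treated as bookkeeping that recovers the rest of the scattering matrix. It does not: the reduction yields $a(k)=\overline{a(\pm i\overline{k})}$, $d(k)=\overline{d(\pm i\overline{k})}$, $c(k)=\mp\overline{b(\pm i\overline{k})}$ (Corollary \ref{c2+}), so $c$ is tied to $b$ but $d$ is \emph{not} tied to $a$. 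The scattering data is therefore the independent pair $r_1=b/a$, $r_2=c/d$ (equivalently $r_\pm(z)$), both of which enter the jump matrix \eqref{3.7}; this is precisely the ``short of symmetry'' feature the paper flags, and a one-coefficient formulation cannot close. Second, your solvability argument --- Neumann series/contraction for $I-\mathcal{C}_{r}$ under the smallness hypothesis --- fails under the actual hypothesis \eqref{small+}: Proposition \ref{psmall} and Corollary \ref{csmall} only give $|r_j(k)|\le c_0<1$ with $c_0$ close to $1$ (the paper gets $|a|>0.705$, $|b|<0.7048$), so $\|S\|_{L^\infty_z}$ is order one and the Neumann series need not converge. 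The paper instead proves that $I+S_H(x,k)$ is positive definite whenever $|r_j|<1$ (Proposition \ref{p5}) and runs a Fredholm-plus-vanishing-lemma argument with an explicit resolvent bound (Proposition \ref{p78}); smallness enters only to force $|r_j|<1$ and keep $1/a,1/d$ bounded, not to make the jump small.

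Two further ingredients your sketch omits would surface as obstructions if you carried it out. The paper explicitly records that the $z$-plane coefficients $r_\pm$ alone are not regular enough, and interpolates between the $k$- and $z$-planes via the auxiliary problems $N_j=M\rho_j-\rho_j$ of \eqref{3.29} and RH Problem \ref{RH4}; some such device is needed because the reconstruction formulas live in $z$ while the positivity structure lives in $k$. More importantly, the weighted Cauchy-projection estimates are one-sided in $x$ (Lemma \ref{p7} holds for $\sup_{x\in\mathbb{R}^+}$), so the reconstruction formulas \eqref{4.72}, \eqref{4.71} control $u$ only on the half-line $\mathbb{R}^+$; to recover $u\in H^2\cap H^{1,1}$ on all of $\mathbb{R}$ the paper introduces the scalar RH Problem \ref{RH5} for $\delta(z)$ and the conjugated problem \ref{RH6} with coefficients $r_{\delta,\pm}$, yielding the second set of formulas \eqref{4.91}, \eqref{4.90} valid on $\mathbb{R}^-$. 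Without this $\delta$-twist your step (i) (propagating two derivatives through the reconstruction) cannot be completed on the whole line. The remaining elements of your plan --- absence of discrete spectrum under \eqref{small+}, uniqueness via Beals--Coifman, globality from the $C(T)$ bound, and the composed Lipschitz estimate --- do align with Section 5 of the paper.
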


\begin{remark}  Here we remark that compared to that of the DNLS equation \cite{Pel}, 
  the Jost function and  scattering data for the nDNLS equation \eqref{1.1}  is short of   symmetry properties, which 
This distinctive feature not only affects the solvability of the associated RH problem for the Cauchy problem \eqref{1.1}-\eqref{1.2}, but also makes more   analytical   difficult.
To ensure  the existence of the  solution to the RH problem, a small norm assumption \eqref{small+} on the initial data is necessary.
\end{remark}

The structure of the paper is as follows. In Section 2, we first establish the Jost functions in the $z$-plane and give its existence, asymptotics and regularity. Then, we returned to the $k$-plane, constructed the scattering data and gave the corresponding regularity.
Section 3 constructs several RH problems and defines the corresponding reflection coefficients. In order to ensure the solvability of the RH problems, a suitable small norm condition is found. Further, the regularity of the reflection coefficients and the Lipshitz continuity with respect to the potential are also given.
Section 4 proves the solvability of the RH problem with the help of properties of the Cauchy operator and gives an estimate of its solution. The Reconstruction formulas for the potential and the corresponding estimate are also obtained.
Finally, we give the time evolution of the linear equation \eqref{1.10} and the proof of the Theorem \ref{t1} in Section 5.

\section{Direct scattering transform}

In this section, we  state some  main
results on  the  direct  scattering transform associated with the Cauchy problem
\eqref{1.1}-\eqref{1.2}.

\subsection{Notations}

We introduce some notations used in this paper. The Sobolev space
\begin{align*}
H^{m}(\mathbb{R})=\{f\in L^{2}(\mathbb{R}): \partial_{x}^{j}f\in L^{2}(\mathbb{R}), \ j=1,2,\cdots,m\},
\end{align*}
 equipped with the norm
\begin{align*}
\|f\|_{H^{m}(\mathbb{R})}=\sum_{j=0}^{m}\|\partial_{x}^{j}f\|_{L^{2}(\mathbb{R})}.
\end{align*}
A weighted space $L^{p,s}(\mathbb{R})$  is defined by
\begin{align*}
L^{p,s}(\mathbb{R})=\{f\in L^{p}(\mathbb{R}): \langle \cdot\rangle^{s} f\in L^{p}(\mathbb{R})\},\ \ s\geq0,
\end{align*}
equipped with the norm
\begin{align*}
\|f\|_{L^{p,s}(\mathbb{R})}=\|\langle \cdot\rangle^{s} f\|_{L^{p}(\mathbb{R})},
\end{align*}
where $\langle \cdot \rangle=\sqrt{1+|\cdot|^2}$. The weighted Sobolev space $H^{m,s}(\mathbb{R})$ is defined by
\begin{align*}
H^{m,s}(\mathbb{R})=\{f\in L^{2}(\mathbb{R}): \langle \cdot\rangle^{s}\partial_{x}^{j}f\in L^{2}(\mathbb{R}),\ j=1,2,\cdots,m\},\ \ s\geq0,
\end{align*}
equipped with the norm
\begin{align*}
\|f\|_{H^{m,s}(\mathbb{R})}=\sum_{j=1}^{m}\|\langle \cdot\rangle^{s}\partial_{x}^{j}f\|_{L^{2}(\mathbb{R})},\ s\geq0.
\end{align*}

\subsection{The Jost functions in the $z$-variable}

We consider the case that the compatibility condition of system \eqref{1.9} and \eqref{1.10} under the symmetry reduction \eqref{sr} leads to the nDNLS equation \eqref{1.1}. In fact, due to the invariance of $x\rightarrow -x$, the sign of $\sigma=\pm1$ does not matter in the DNLS type equations, so it is sufficient to analyse only for $\sigma=1$.
According to the inverse scattering theory, the time dependence of the function is ignored for the moment and the time variable $t$ is seen as fixed.

Note that the equation \eqref{1.9}, unlike the nonlocal nonlinear Schr\"{o}dinger equation, the standard fixed point argument for Volterra's integral equations associated with the linear equation \eqref{1.9} is not uniform in $k$ as $|k|\rightarrow\infty$ if $Q\in L^{1}(\mathbb{R})$.
Therefore, we solve this problem below by transforming the linear equation \eqref{1.9}. For any $u(x)\in L^{\infty}(\mathbb{R})$, $k\in \mathbb{C}$, we define the following two transformations
\begin{align}
\Phi_1(x,k)=T_1(x,k)\Phi(x,k),\ \ \Phi_2(x,k)=T_2(x,k)\Phi(x,k), \label{2.3}
\end{align}
where
\begin{align}
T_1(x,k)=\begin{bmatrix}1&0\\v(x)&2ik\end{bmatrix},\ \ T_2(x,k)=\begin{bmatrix}2ik&-u(x)\\0&1\end{bmatrix}.
\end{align}
The transformation  (\ref{2.3}) changes   the spectral problem \eqref{1.9} into the following two Zakharov-Shabat
type spectral problems:
{\small\begin{align}\label{2.4}
\partial_x\Phi_{1}=-ik^2\Phi_1+Q_1(u)\Phi_1,\ \ Q_1(u)=\frac{1}{2i}\begin{bmatrix}-u(x)v(x)&u(x)\\2iv_x(x)-u(x)v^2(x)&u(x)v(x)\end{bmatrix},
\end{align}}
and
{\small\begin{align}\label{2.5}
\partial_x\Phi_{2}=-ik^2\Phi_2+Q_2(u)\Phi_2,\ \ Q_2(u)=\frac{1}{2i}\begin{bmatrix}-u(x)v(x)&-2iu_x(x)-u^2(x)v(x)\\v(x)&u(x)v(x)
\end{bmatrix}.
\end{align}}

Let $\Phi_1^{\pm}(x,k), \Phi_2^{\pm}(x,k)$ be even vectorial Jost function solutions satisfying the spectral problems \eqref{2.4} and \eqref{2.5} respectively, and have the following asymptotic properties
\begin{align*}
\Phi_1^{\pm}(x,k)\rightarrow e^{-ik^{2}x}, \ \ \Phi_2^{\pm}(x,k)\rightarrow e^{ik^{2}x},\ \ x\rightarrow \pm\infty.
\end{align*}
It is more convenient to define a new complex variable $z=k^2$ and work with the normalized Jost solutions
\begin{align}\label{2.6}
\mu_{\pm}(x,z)=\Phi_1(x,k)e^{izx}, \ \ \nu_{\pm}(x,z)=\Phi_2(x,k)e^{-izx},
\end{align}
with the asymptotic behavior
\begin{align}\label{2.7}
\mu_{\pm}(x,z)\rightarrow e_1, \ \ \nu_{\pm}(x,z)\rightarrow e_2,\ \ x\rightarrow \pm\infty.
\end{align}
Here $e_{1}=[1,0]^{T}$ and $e_{2}=[0,1]^{T}$.
The normalized Jost solutions have the following integral expression
\begin{align}\label{2.8}
\mu_{\pm}(x,z)=e_1+\int_{\pm\infty}^{x}\begin{bmatrix}1&0\\0&e^{2iz(x-y)}\end{bmatrix}Q_1(u(y))\mu_{\pm}(y,z)dy,
\end{align}
\begin{align}\label{2.9}
\nu_{\pm}(x,z)=e_2+\int_{\pm\infty}^{x}\begin{bmatrix}e^{2iz(x-y)}&0\\0&1\end{bmatrix}Q_2(u(y))\nu_{\pm}(y,z)dy.
\end{align}

The following lemmas will introduce the existence, uniqueness, analyticity, asymptotics, smoothness, and Lipschitz continuity of the Jost functions $\mu_{\pm}(x,z)$ and $\nu_{\pm}(x,z)$ satisfying the equations \eqref{2.8} and \eqref{2.9} respectively.
\begin{lemma}\label{l1}
If $u\in H^{1,1}(\mathbb{R})$, then for every $z\in\mathbb{R}$, integral equations \eqref{2.8} and \eqref{2.9} there exists unique solutions $\mu_{\pm}(x,z)\in L_{x}^{\infty}(\mathbb{R})$ and $\nu_{\pm}(x,z)\in L_{x}^{\infty}(\mathbb{R})$ respectively, and
\begin{equation}\label{2.10}
\|\mu_{\mp}(x,z)\|_{L_{x}^{\infty}}+\|\nu_{\pm}(x,z)\|_{L_{x}^{\infty}}\leq C,\ \ z\in\mathbb{C}^{\pm},
\end{equation}
where $C$ is a constant independent of $z$.
Moreover, $\forall x\in\mathbb{R}$, $\mu_{-}(x,z), \nu_{+}(x,z)$ are continued analytically in $z\in\mathbb{C}^+$, while $\mu_{+}(x,z), \nu_{-}(x,z)$ are continued analytically in $z\in\mathbb{C}^-$.
\end{lemma}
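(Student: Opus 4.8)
The plan is to treat \eqref{2.8} and \eqref{2.9} as Volterra integral equations and solve them by a Neumann (successive-approximation) series, exploiting the fact that the transformation introduced just before the lemma has rendered the exponential kernel uniformly bounded. I will carry out the argument for $\mu_-$; the cases $\mu_+$ and $\nu_\pm$ are entirely analogous, with the roles of $\mathbb{C}^+$ and $\mathbb{C}^-$ and of the two vector components interchanged.

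First I would record the crucial sign observation that fixes the half-plane of analyticity. In \eqref{2.8} the only $z$-dependence sits in the kernel entry $e^{2iz(x-y)}$ acting on the second component. For $\mu_-$ the integration runs over $y\in(-\infty,x]$, so $x-y\geq 0$; writing $z=\xi+i\eta$ gives $|e^{2iz(x-y)}|=e^{-2\eta(x-y)}\leq 1$ precisely when $\eta=\mathrm{Im}\,z\geq 0$, that is $z\in\overline{\mathbb{C}^+}$. This uniform bound on the kernel is exactly what the pre-lemma transformation was designed to supply, and it is the structural fact that makes the iteration uniform in $z$, in particular as $|z|\to\infty$, unlike the untransformed problem \eqref{1.9}.

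Next I would establish that each entry of $Q_1(u)$ lies in $L^1(\mathbb{R})$ with norm controlled by $\|u\|_{H^{1,1}}$. Using $v(x)=i\sigma\overline{u}(-x)$, the entries are built from $uv$, $u$, $v_x$, and $uv^2$. The bilinear and trilinear terms are handled by H\"older and the embedding $H^1\hookrightarrow L^\infty$, via $\|uv\|_{L^1}\leq\|u\|_{L^2}\|v\|_{L^2}$ and $\|uv^2\|_{L^1}\leq\|u\|_{L^\infty}\|v\|_{L^2}^2$. The derivative term is the only delicate one; I would bound it by $\|v_x\|_{L^1}\leq\|\langle\cdot\rangle v_x\|_{L^2}\,\|\langle\cdot\rangle^{-1}\|_{L^2}$, which is finite because $\langle\cdot\rangle^{-1}\in L^2(\mathbb{R})$ while $\langle\cdot\rangle v_x\in L^2$ is precisely the $H^{1,1}$ information. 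With these two facts the iteration closes routinely: the Volterra structure together with $|e^{2iz(x-y)}|\leq 1$ yields the factorial bound $\|\mu^{(n)}(\cdot,z)\|_{L^\infty_x}\leq\frac{1}{n!}\left(\int_{\mathbb{R}}\|Q_1(u)\|\,dy\right)^n$, so the series converges absolutely and uniformly for $z\in\overline{\mathbb{C}^+}$. Summation produces the unique $\mu_-\in L^\infty_x$ and the uniform estimate \eqref{2.10}; uniqueness follows from the same Gronwall-type bound applied to the difference of two solutions. For analyticity I would note that each iterate $\mu^{(n)}(x,\cdot)$ is analytic in $\mathbb{C}^+$ and that the convergence is uniform on compact subsets, so Morera's theorem transfers analyticity to the limit, while \eqref{2.7} follows by sending $x\to-\infty$ in \eqref{2.8} and applying dominated convergence to the vanishing integral.

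The main obstacle, and the step I would treat most carefully, is the $L^1$ control of the off-diagonal entry containing $v_x$ in $Q_1$ (and, in \eqref{2.9}, the analogous $u_x$ entry in $Q_2$): this is the single place where $H^{1,1}$ rather than mere $L^1$ integrability is indispensable, and it is what forces the weighted hypothesis in the statement. Everything else is the standard Volterra fixed-point machinery, made uniform in $z$ by the kernel bound established in the first step.
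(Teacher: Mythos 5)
Your proposal follows essentially the same route as the paper: a Neumann (successive-approximation) series for the Volterra equations, the uniform kernel bound $|e^{2iz(x-y)}|\leq 1$ for $\mathrm{Im}\,z\geq 0$ on the integration range $y\leq x$, the factorial estimate $\|Q_1\|_{L^1(\mathbb{R})}^{n}/n!$ giving the $z$-independent bound \eqref{2.10}, uniqueness by a Gronwall-type argument, and analyticity of the limit from uniform convergence of the analytic iterates. The only difference is that you spell out the verification $Q_1(u)\in L^1(\mathbb{R})$ (weighted Cauchy--Schwarz for the $v_x$ term, H\"older plus $H^1\hookrightarrow L^\infty$ for the nonlinear terms), which the paper simply asserts; this is a correct and welcome addition, not a deviation.
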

\begin{proof}
We provide a detailed proof using $\mu_{-}(x,z)$ as an example, and the proofs for the other Jost functions are similar. We denote the $L^1$ matrix norm of the $2\times2$ matrix function $Q$ as
\begin{align*}
\|Q\|_{L^{1}(\mathbb{R})}:=\sum_{i,j=1}^{2}\|Q_{i,j}\|_{L^1}.
\end{align*}
If $u\in H^{1,1}(\mathbb{R})$, then $Q_{1}\in L^{1}(\mathbb{R})$. Define Neumann series
\begin{align*}
\omega_{0}(x,z)=e_{1},\ \ \omega_{n+1}(x,z)=\int_{-\infty}^{x}F(x,y,z)\omega_{n}(y)dy,
\end{align*}
where $F(x,y,z)=\text{diag}[1,e^{2iz(x-y)}]Q_{1}(y)$.
For every $\text{Im}z\geq 0$ and for every $x\in\mathbb{R}$, we have
\begin{align*}
|\omega_{1}(x,z)|\leq\int_{-\infty}^{x}|F(x,y,z)\omega_{0}(y)|dy\leq\int_{-\infty}^{x}|Q_{1}(y)|dy\triangleq \rho(x),
\end{align*}
and $\rho(x)\leq \|Q_{1}\|_{L^{1}}$, $\rho_{x}(x)=|Q_{1}(x)|$. Further we have
\begin{align*}
|\omega_{2}(x,z)|\leq\int_{-\infty}^{x}|Q_{1}(y)\omega_{1}(y)|dy\leq\int_{-\infty}^{x}\rho_{y}(y)\rho(y)dy\leq\frac{\left\|Q_1\right\|_{L^{1}}^{2}}{2}.
\end{align*}
Using mathematical induction we get
\begin{align*}
|\omega_{n}(x,z)|\leq\frac{\left\|Q_1\right\|_{L^{1}}^{n}}{n!},
\end{align*}
therefore
\begin{align}\label{2.13}
|\sum_{n=0}^{\infty}\omega_{n}(x,z)|\leq\sum_{n=0}^{\infty}\frac{\left\|Q_1\right\|_{L^{1}}^{n}}{n!}=e^{\left\|Q_1\right\|_{L^{1}}^{n}}.
\end{align}
We know by direct verification that $\sum_{n=0}^{\infty}\omega_{n}(x,z)\triangleq \mu_{-}(x,z)$ satisfies \eqref{2.8} and that $\mu_{-}(x,z)$ is a unique solution of \eqref{2.8} by Gronwall's inequality.
In addition, uniform boundedness \eqref{2.10}, as well as analyticity, is easily obtained from \eqref{2.13}.
\end{proof}
\begin{lemma}\label{l2}
If $u\in H^{1,1}(\mathbb{R})$, then for every $x\in\mathbb{R}$, we have
\begin{align}
&\lim_{|z|\rightarrow\infty}\mu_{\pm}(x,z)=\mu_{\pm}^{\infty}(x)e_1,\ \ \mu_{\pm}^{\infty}(x):=e^{-\frac{1}{2i}\int_{\pm\infty}^xu(y)v(y)dy},\label{2.15}\\
&\lim_{|z|\rightarrow\infty}\nu_{\pm}(x,z)=\nu_{\pm}^{\infty}(x)e_2,\ \ \nu_{\pm}^{\infty}(x):=e^{\frac{1}{2i}\int_{\pm\infty}^xu(y)v(y)dy}.\label{2.16}
\end{align}
Moreover, if $u\in C^1(\mathbb{R})\cap H^{1,1}(\mathbb{R})$, then for every $x\in\mathbb{R}$, we have
\begin{align}
&\lim_{|z|\rightarrow\infty}z[\mu_{\pm}(x,z)-\mu_{\pm}^{\infty}(x)e_1]=\alpha_{\pm}(x),\label{2.17}\\
&\lim_{|z|\rightarrow\infty}z[\nu_{\pm}(x,z)-\nu_{\pm}^{\infty}(x)e_2]=\beta_{\pm}(x),\label{2.18}
\end{align}
where $\alpha_{\pm}(x):=[\alpha_{\pm}^{(1)}(x),\alpha_{\pm}^{(2)}(x)]^{T},\ \ \beta_{\pm}(x):=[\beta_{\pm}^{(1)}(x),\beta_{\pm}^{(2)}(x)]^{T}$ and
\begin{align*}
&\alpha_{\pm}^{(1)}(x)=\frac{1}{4}e^{-\frac{1}{2i}\int_{\pm\infty}^{x}u(y)v(y)dy}\int_{\pm\infty}^{x}[u(y)v_y(y)-\frac{1}{2i}u^2(y)v^2(y)]dy,\\
&\alpha_{\pm}^{(2)}(x)=-\frac{1}{2i}\partial_x[v(x)e^{-\frac{1}{2i}\int_{\pm\infty}^{x}u(y)v(y)dy}],\\
&\beta_{\pm}^{(1)}(x):=-\frac{1}{2i}\partial_x[u(x)e^{\frac{1}{2i}\int_{\pm\infty}^{x}u(y)v(y)dy}],\\
&\beta_{\pm}^{(2)}(x):=-\frac{1}{4}e^{\frac{1}{2i}\int_{\pm\infty}^{x}u(y)v(y)dy}\int_{\pm\infty}^{x}[u_y(y)v(y)+\frac{1}{2i}u^2(y)v^2(y)]dy.
\end{align*}
\end{lemma}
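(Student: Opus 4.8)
The plan is to work componentwise and exploit the dichotomy between the non-oscillatory (``slow'') and oscillatory (``fast'') entries of the Jost functions. Writing $\mu_\pm=(\mu_\pm^{(1)},\mu_\pm^{(2)})^{T}$ and reading off the two scalar equations from \eqref{2.8} (and symmetrically for $\nu_\pm$ from \eqref{2.9}), one sees that $\mu_\pm^{(1)}$ obeys a non-oscillatory Volterra equation while $\mu_\pm^{(2)}$ carries the factor $e^{2iz(x-y)}$. Differentiating \eqref{2.8} in $x$ converts it into the first-order system
\begin{align*}
\partial_x\mu_\pm^{(1)}&=\tfrac{1}{2i}\bigl(-uv\,\mu_\pm^{(1)}+u\,\mu_\pm^{(2)}\bigr),\\
\partial_x\mu_\pm^{(2)}&=2iz\,\mu_\pm^{(2)}+\tfrac{1}{2i}\bigl((2iv_x-uv^2)\mu_\pm^{(1)}+uv\,\mu_\pm^{(2)}\bigr),
\end{align*}
with $\mu_\pm\to e_1$ as $x\to\pm\infty$. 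The term $2iz\,\mu_\pm^{(2)}$ is what suppresses the fast component for large $|z|$ and organizes the whole expansion.

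First I would establish the leading order \eqref{2.15}--\eqref{2.16}. Since $u\in H^{1,1}(\mathbb{R})$ yields $u,v\in L^\infty\cap L^1$ and $v_x,uv,uv^2\in L^1$, every entry of $Q_1$ (resp.\ $Q_2$) lies in $L^1(\mathbb{R})$; together with the uniform bound \eqref{2.10} from Lemma \ref{l1}, the integrand defining $\mu_\pm^{(2)}$ is bounded in $L^1_y$ uniformly in $z$. The Riemann--Lebesgue lemma then forces $\mu_\pm^{(2)}(x,z)\to0$ as $|z|\to\infty$; the one subtlety is that the integrand itself depends on $z$ through the Jost function, which I would resolve by comparing with the $z$-independent frozen profile $\frac{1}{2i}(2iv_y-uv^2)\mu_\pm^{\infty}$ and controlling the difference via \eqref{2.10} and Gronwall's inequality. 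Passing to the limit in the $\mu_\pm^{(1)}$-equation then leaves the transport equation $\partial_x\mu_\pm^{(1)}=-\frac{1}{2i}uv\,\mu_\pm^{(1)}$ with $\mu_\pm^{(1)}\to1$ at $\pm\infty$, whose solution is exactly $\mu_\pm^{\infty}(x)=e^{-\frac{1}{2i}\int_{\pm\infty}^x uv\,dy}$; the computation for $\nu_\pm^{\infty}$ is identical with the roles of the two components interchanged.

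For the refined expansion \eqref{2.17}--\eqref{2.18} I would invoke the strengthened hypothesis $u\in C^1(\mathbb{R})\cap H^{1,1}(\mathbb{R})$, which makes the entries of $Q_1,Q_2$ differentiable and hence licenses one integration by parts in the oscillatory integral (equivalently, solving the fast equation as $\mu_\pm^{(2)}=-\frac{1}{2iz}\bigl([Q_1\mu_\pm]^{(2)}-\partial_x\mu_\pm^{(2)}\bigr)$ and iterating). The boundary contribution at $y=x$ produces the coefficient of $1/z$, giving $z\,\mu_\pm^{(2)}\to\alpha_\pm^{(2)}=-\frac{1}{2i}\partial_x[v\,\mu_\pm^{\infty}]$, while the remaining integral is $o(1/z)$ by a further Riemann--Lebesgue argument. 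Inserting the two-term expansions $\mu_\pm^{(1)}=\mu_\pm^{\infty}+z^{-1}\alpha_\pm^{(1)}+o(z^{-1})$ and $\mu_\pm^{(2)}=z^{-1}\alpha_\pm^{(2)}+o(z^{-1})$ into the slow equation and collecting the $1/z$ terms produces the inhomogeneous linear ODE
\begin{align*}
\partial_x\alpha_\pm^{(1)}+\tfrac{1}{2i}uv\,\alpha_\pm^{(1)}=\tfrac{1}{2i}u\,\alpha_\pm^{(2)},\qquad \alpha_\pm^{(1)}(\pm\infty)=0,
\end{align*}
which I would solve with the integrating factor $(\mu_\pm^{\infty})^{-1}$ to recover the stated formula for $\alpha_\pm^{(1)}$; the analogous ODE for $\beta_\pm^{(2)}$ driven by $\beta_\pm^{(1)}$ gives \eqref{2.18}.

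I expect the main obstacle to be the rigorous, uniform control of the oscillatory integrals when the integrand depends on the large parameter $z$ through the Jost function itself. At leading order this is the circular-looking step settled by the frozen-profile/Gronwall comparison; at refined order it reappears as the need to show that the remainder after integration by parts is genuinely $o(1/z)$ rather than merely $o(1)\cdot z^{-1}$, and to propagate the two-term expansion self-consistently through the coupled slow/fast system. The hypothesis $u\in C^1$ enters precisely here: it is exactly the regularity required to integrate by parts once and thereby expose the $1/z$ coefficient.
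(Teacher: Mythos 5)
Your treatment of the leading order \eqref{2.15}--\eqref{2.16} is sound, and it actually takes a different (and in one respect stronger) route than the paper: you use the Riemann--Lebesgue lemma together with a frozen-profile comparison, and the circularity you flag is genuinely resolvable, since subtracting the Volterra equation for $\mu_{\pm}^{\infty}$ from \eqref{2.8} gives a closed system for $r_{1}=\mu_{\pm}^{(1)}-\mu_{\pm}^{\infty}$, $r_{2}=\mu_{\pm}^{(2)}$ whose only $z$-dependent forcing is the frozen oscillatory integral, after which Gronwall with the $L^{1}$ kernel and the bound \eqref{2.10} yields $\sup_{y}(|r_{1}|+|r_{2}|)\to 0$; this even covers real $z\to\pm\infty$. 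The paper instead applies dominated convergence directly to \eqref{2.20} using the damping $|e^{2iz(x-y)}|=e^{-2\mathrm{Im}z\,(x-y)}$, and accordingly restricts the limit to contours in the analyticity domain with $|\mathrm{Im}(z)|\to\infty$ --- a restriction that matters below.

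The refined order \eqref{2.17}--\eqref{2.18} is where your plan has a genuine gap: integration by parts is not licensed by the hypothesis. The oscillatory density is $\varphi(y,z)=[2iv_{y}-uv^{2}]\mu_{-}^{(1)}+uv\,\mu_{-}^{(2)}$, and the fast column of $Q_{1}$ \emph{already contains} $v_{x}$; so ``one integration by parts'' requires $v_{yy}$, i.e.\ $C^{2}$-type regularity of $u$, whereas $u\in C^{1}\cap H^{1,1}$ makes $\varphi(\cdot,z)$ only continuous and $L^{1}$. Your closing claim that $C^{1}$ is ``exactly the regularity required to integrate by parts once'' is off by one derivative. Moreover, even granting smoothness, $\partial_{y}\varphi$ contains $\partial_{y}\mu_{-}^{(2)}=2iz\,\mu_{-}^{(2)}+\cdots$, so the post-IBP remainder contains $\frac{1}{2iz}\int e^{2iz(x-y)}uv\cdot 2iz\,\mu_{-}^{(2)}\,dy$, which is of the same size as the original integral --- Riemann--Lebesgue does not give $o(1/z)$, and your ``iterate the fast equation'' fallback is circular because $\partial_{x}\mu_{-}^{(2)}\to 2i\alpha_{-}^{(2)}+[Q_{1}\mu_{-}^{\infty}e_{1}]^{(2)}\neq 0$. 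Quantitatively, the self-consistent bootstrap you hope for also falls short: the Gronwall comparison yields $\sup_{y}|\mu_{-}^{(2)}(y,z)|=O((\mathrm{Im}\,z)^{-1/2})$ from $w\in L^{2}$, not $O(1/z)$. The paper's proof supplies precisely the missing device: split \eqref{2.20} at a boundary layer of width $\delta=(\mathrm{Im}\,z)^{-1/2}$ into $h_{1}+h_{2}+h_{3}$, extract the coefficient from $h_{2}$ (an approximate-identity computation, $z\int_{x-\delta}^{x}e^{2iz(x-y)}dy\to\frac{1}{2i}$, needing only \emph{continuity} of $\varphi$), kill $h_{1}$ by the damping $e^{-2\sqrt{\mathrm{Im}\,z}}$ and $h_{3}$ by uniform continuity on the shrinking window --- which is also why the lemma's limit must be taken along contours with $|\mathrm{Im}(z)|\to\infty$ rather than along the real axis where your IBP-free ingredients live. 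Your final step (matching $1/z$ terms in the slow equation to get the ODE for $\alpha_{\pm}^{(1)}$) coincides with the paper's integrating-factor computation and is fine once $z\mu_{-}^{(2)}\to\alpha_{-}^{(2)}$ is established with enough uniformity to pass the limit under the integral.
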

\begin{proof}
We still only give the proof of $\mu_{-}(x,z)$ and the rest is similar. Note that the limit here is along a contour that is in the analytical domain of the Jost functions and makes $|\text{Im}(z)|\rightarrow\infty$. Let $\mu_{-}(x,z)=[\mu_{-}^{(1)}(x,z),\mu_{-}^{(2)}(x,z)]^{T}$, and according to the integral equation \eqref{2.8} we get
\begin{align}
&\mu_{-}^{(1)}(x,z)=1-\frac{1}{2i}\int_{-\infty}^{x}u(y)v(y)\mu_{-}^{(1)}(y,z)-u(y)\mu_{-}^{(2)}(y,z)dy,\label{2.19}\\
&\mu_{-}^{(2)}(x,z)=\frac{1}{2i}\int_{-\infty}^{x}e^{2iz(x-y)}\varphi(y,z)dy,\label{2.20}
\end{align}
where
\begin{align*}
\varphi(x,z)=[2i\partial_{x}v(x)-u(x)v^{2}(x)]\mu_{-}^{(1)}(x,z)+u(x)v(x)\mu_{-}^{(2)}(x,z).
\end{align*}
When $\text{Im}(z)>0$, Lebesgue's dominated convergence theorem can be applied to \eqref{2.20} due to $u\in H^{1,1}(\mathbb{R})$ and uniform boundedness \eqref{2.10} holds, yielding
\begin{align}\label{2.20a}
\lim_{|z|\rightarrow\infty}\mu_{-}^{(2)}(x,z)=0.
\end{align}
Substituting \eqref{2.20a} into \eqref{2.19} gives the limit \eqref{2.15}.

In order to obtain the formula \eqref{2.17}, \eqref{2.20} is rewritten in the following form
\begin{align*}
\mu_{-}^{(2)}(x,z)=&\frac{1}{2i}\int_{-\infty}^{x-\delta}e^{2iz(x-y)}\varphi(y,z)dy+\frac{\varphi(x,z)}{2i}\int_{x-\delta}^{x}e^{2iz(x-y)}dy\\
&+\frac{1}{2i}\int_{x-\delta}^{x}e^{2i(x-y)}[\varphi(y,z)-\varphi(x,z)]dy\\
\triangleq & h_{1}(x,z)+h_{2}(x,z)+h_{3}(x,z).
\end{align*}
Since $u\in C^1(\mathbb{R})\cap H^{1,1}(\mathbb{R})$, we have $\varphi(\cdot,z)\in C(\mathbb{R})\cap L^{1}(\mathbb{R})$, $\forall$ $\text{Im}(z)>0$. If we let $\delta=[\text{Im}(z)]^{-1/2}$, then
\begin{align*}
\lim_{|z|\rightarrow\infty}z\mu_{-}^{(2)}(x,z)&=\lim_{|z|\rightarrow\infty}z h_{2}(x,z)=\frac{1}{4}\lim_{|z|\rightarrow\infty}\varphi(x,z)\\
&=\frac{1}{4}[2i\partial_{x}v(x)-v^{2}(y)u(y)]\mu_{-}^{\infty}(x)\\
&=\alpha_{-}^{(2)}(x).
\end{align*}
Taking the derivative of the equation \eqref{2.19} with respect to the $x$ variable and using $\overline{\mu}_{-}^{\infty}(x)$ as the integrating factor, we get
\begin{equation*}
\mu_{-}^{(1)}(x,z)=\mu_{-}^{\infty}(x)+\frac{1}{2i}\mu_{-}^{\infty}(x)\int_{-\infty}^{x}u(y)\overline{\mu}_{-}^{\infty}(y)\mu_{-}^{(2)}(y,z)dy,
\end{equation*}
and hence
\begin{align*}
\lim_{|z|\rightarrow\infty}z[\mu_{-}^{(1)}(x,z)-\mu_{-}^{\infty}(x)]&=\frac{1}{2i}\mu_{-}^{\infty}(x)\lim_{|z|\rightarrow\infty}\int_{-\infty}^{x}u(y)\overline{\mu}_{-}^{\infty}(y)z\mu_{-}^{(2)}(y,z)dy\\
&=\alpha_{-}^{(1)}(x).
\end{align*}
\end{proof}
\begin{lemma}\label{l3}
Suppose that $u\in H^{1,1}(\mathbb{R})$, then we have
\begin{align}\label{2.30}
\mu_{\pm}(x,z)-\mu_{\pm}^{\infty}(x)e_1, \ \nu_{\pm}(x,z)-\nu_{\pm}^{\infty}(x)e_2\in L_{x}^{\infty}(\mathbb{R}^{\pm}, H_{z}^1(\mathbb{R})),
\end{align}
and the map
\begin{align}\label{2.43}
u\mapsto[\mu_{\pm}(x,z)-\mu_{\pm}^{\infty}(x)e_1, \nu_{\pm}(x,z)-\nu_{\pm}^{\infty}(x)e_2]
\end{align}
is Lipschitz continuous from $H^{1,1}(\mathbb{R})$ to $L_{x}^{\infty}(\mathbb{R}^{\pm}, H_{z}^{1}(\mathbb{R}))$.

Moreover, if $u\in H^2(\mathbb{R})\cap H^{1,1}(\mathbb{R})$, then we have
\begin{equation}\label{2.31}
z(\mu_{\pm}(x,z)-\mu_{\pm}^{\infty}(x)e_1)-\alpha_{\pm}(x)\in L_{x}^{\infty}(\mathbb{R},L_z^2(\mathbb{R})),
\end{equation}
\begin{equation}\label{2.32}
z(\nu_{\pm}(x,z)-\nu_{\pm}^{\infty}(x)e_2)-\beta_{\pm}(x)\in L_{x}^{\infty}(\mathbb{R},L_z^2(\mathbb{R})),
\end{equation}
and the map
\begin{align}\label{2.45}
u\mapsto[z(\mu_{\pm}(x,z)-\mu_{\pm}^{\infty}(x)e_1)-\alpha_{\pm}(x), z(\nu_{\pm}(x,z)-\nu_{\pm}^{\infty}(x)e_2)-\beta_{\pm}(x)]
\end{align}
is Lipschitz continuous from $H^{2}(\mathbb{R})\cap H^{1,1}(\mathbb{R})$ to $L_{x}^{\infty}(\mathbb{R}, L_{z}^{2}(\mathbb{R}))$.
\end{lemma}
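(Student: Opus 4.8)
The plan is to prove every assertion for $\mu_-$ only; the statements for $\mu_+$ and $\nu_\pm$ follow verbatim after replacing the lower limit $-\infty$ in \eqref{2.8}--\eqref{2.9} by $+\infty$ and exchanging the two diagonal entries, and the reduction $v(x)=i\sigma\overline{u}(-x)$ is harmless here since $\|v\|_{H^{1,1}}=\|u\|_{H^{1,1}}$. Write $n_-(x,z):=\mu_-(x,z)-\mu_-^{\infty}(x)e_1$; subtracting the limit \eqref{2.15} from \eqref{2.8} shows that $n_-$ solves a Volterra equation $n_-=n_-^{(0)}+\mathcal Kn_-$, whose kernel is $\mathrm{diag}[1,e^{2iz(x-y)}]Q_1(y)$ and whose inhomogeneity $n_-^{(0)}$ collects the terms built from the explicit, $z$-independent profile $\mu_-^{\infty}$. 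The two analytic tools I use repeatedly are: (i) Minkowski's integral inequality, which for $\mathrm{Im}\,z\ge0$ and $y<x$ (so that $|e^{2iz(x-y)}|\le1$) gives
\[
\Big\|\int_{-\infty}^{x}e^{2iz(x-y)}q(y)f(y,z)\,dy\Big\|_{L^2_z}\le\int_{-\infty}^{x}|q(y)|\,\|f(y,\cdot)\|_{L^2_z}\,dy;
\]
and (ii) the Plancherel theorem, which after the substitution $s=x-y$ identifies $z\mapsto\int_{-\infty}^{x}e^{2iz(x-y)}\psi(y)\,dy$ with a rescaled Fourier transform of $\psi(x-\cdot)\mathbf{1}_{(0,\infty)}$, hence bounds its $L^2_z$ norm by $\|\psi\|_{L^2}$ uniformly in $x\in\mathbb R$.

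For \eqref{2.30} I first treat $n_-^{(0)}$: its entries are Fourier integrals of products of $u,v,v_x,u^2v^2$ against $\mu_-^{\infty}$, all lying in $L^1\cap L^2$ once $u\in H^{1,1}$ (note $v_x\in L^2$ and $v\in H^1\hookrightarrow L^\infty$), so tool (ii) gives $\sup_{x\in\mathbb R}\|n_-^{(0)}(x,\cdot)\|_{L^2_z}<\infty$. Feeding this into the Volterra equation and applying tool (i), the scalar $\Psi(x):=\|n_-(x,\cdot)\|_{L^2_z}$ satisfies $\Psi(x)\le\Psi^{(0)}(x)+\int_{-\infty}^{x}|Q_1(y)|\,\Psi(y)\,dy$, and Gronwall's inequality closes the bound \emph{uniformly over all} $x\in\mathbb R$. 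For the $z$-derivative I differentiate the integral equation; $\partial_z$ falls on $e^{2iz(x-y)}$ and produces the factor $2i(x-y)$, so $\partial_z n_-$ solves the same Volterra equation with forcing a Fourier integral against $(x-y)$ times the same coefficients. Here the weight of $H^{1,1}$ enters, but only after I restrict $x$ to $\mathbb R^-$: for $y<x\le0$ one has $|x-y|\le|y|\le\langle y\rangle$, so $y\mapsto(x-y)(2iv_y-uv^2)$, etc., is in $L^2$ uniformly in $x\le0$, tool (ii) bounds the forcing, and Gronwall yields $\partial_zn_-\in L^\infty_x(\mathbb R^-,L^2_z)$. This $\mathbb R^-$ restriction is exactly why \eqref{2.30} is stated on $\mathbb R^\pm$. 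The Lipschitz claim \eqref{2.43} is obtained by running the same scheme on $n_-[u]-n_-[\tilde u]$: since $Q_1$ is polynomial in $(u,v,v_x)$ and $\mu_-^{\infty}[u]-\mu_-^{\infty}[\tilde u]$ is controlled by $\|u-\tilde u\|_{H^{1,1}}$ while $\mu_-[u]$ is bounded by \eqref{2.10}, the differences of forcings and kernels are linear in $\|u-\tilde u\|_{H^{1,1}}$, and one more Gronwall argument gives Lipschitz continuity into $L^\infty_x(\mathbb R^-,H^1_z)$.

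For the second part I extract the $O(1/z)$ term by integrating \eqref{2.20} by parts once, using $z\,e^{2iz(x-y)}=-\tfrac1{2i}\partial_ye^{2iz(x-y)}$ to obtain
\[
z\mu_-^{(2)}(x,z)=\tfrac14\varphi(x,z)-\tfrac14\int_{-\infty}^{x}e^{2iz(x-y)}\partial_y\varphi(y,z)\,dy,
\]
whose boundary term converges to $\alpha_-^{(2)}(x)$ as $|z|\to\infty$ by Lemma \ref{l2}; an analogous identity for the first component follows from the integrating-factor representation used in the proof of Lemma \ref{l2}. Subtracting $\alpha_-$ leaves $\tfrac14\big(\varphi(x,z)-\lim_{|z|\to\infty}\varphi(x,z)\big)$, controlled in $L^2_z$ by the part already proved in \eqref{2.30} together with the pointwise bounds $u,u_x\in L^\infty$ from $H^2(\mathbb R)\hookrightarrow W^{1,\infty}(\mathbb R)$, plus the Fourier integral of $\partial_y\varphi$. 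Because this second integral carries no $(x-y)$ weight, tool (ii) bounds it by $\|\partial_y\varphi\|_{L^2}$ uniformly over \emph{all} $x\in\mathbb R$, which is why \eqref{2.31}--\eqref{2.32} are stated on the full line. The extra regularity is essential precisely for this term: $\partial_y\varphi$ contains $v_{yy}$ (hence needs $v\in H^2$, i.e.\ $u\in H^2\cap H^{1,1}$) and the $x$-derivatives of the Jost functions; splitting $\partial_y\varphi$ into its $z$-free leading part (Plancherel) and a remainder proportional to $n_-$ (Minkowski plus Gronwall) then gives \eqref{2.31}, and \eqref{2.32} is identical. The Lipschitz statement \eqref{2.45} again follows by differencing.

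The main obstacle I anticipate is this last step: after the integration by parts one must show that $z\,n_--\alpha_-$ is bounded in $L^2_z$ \emph{uniformly in $x$}, which forces one both to separate the genuinely $z$-dependent pieces of $\partial_y\varphi$ (treated by Minkowski plus Gronwall) from its $z$-free Fourier part (treated by Plancherel), and to trade one more derivative of the potential for regularity, so that the hypothesis $H^2\cap H^{1,1}$—and the embedding $H^2\hookrightarrow W^{1,\infty}$ supplying the uniform-in-$x$ coefficient bounds—are used to their full strength. The absence of the usual Jost-function symmetries noted in the Remark does not interfere, since every estimate is routed through the $x\mapsto-x$ invariant norms $\|u\|_{H^{1,1}}$ and $\|u\|_{H^2}$.
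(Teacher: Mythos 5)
Your overall architecture is the same as the paper's: recast $n_-=\mu_--\mu_-^{\infty}e_1$ as a Volterra equation with kernel $\mathrm{diag}[1,e^{2iz(x-y)}]Q_1(y)$, bound the $z$-free inhomogeneity by Plancherel (this is exactly Proposition~1 of \cite{Pel}, which the paper invokes), invert by Neumann series/Gronwall (equivalent for Volterra operators, cf.\ \eqref{2.38}--\eqref{2.38a}), restrict to $\mathbb{R}^{\mp}$ for the $z$-derivative, and for \eqref{2.31} trade the factor $z$ for a $y$-derivative by parts, which is where $u\in H^2$ enters --- all of this matches. However, there is a genuine gap in your $z$-derivative step. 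After differentiating, the forcing is \emph{not} only ``a Fourier integral against $(x-y)$ times the same coefficients'': it splits as $\partial_z n_-^{(0)}+(\partial_zK)n_-$, and the second piece,
\begin{equation*}
\int_{-\infty}^{x}2i(x-y)\,e^{2iz(x-y)}\Bigl[\tfrac{1}{2i}(2iv_y-uv^2)\,n_-^{(1)}(y,z)+\tfrac{1}{2i}uv\,n_-^{(2)}(y,z)\Bigr]dy,
\end{equation*}
has a genuinely $z$-dependent integrand, so your tool (ii) (Plancherel) does not apply to it, and your write-up bounds only the $z$-free part. If you instead use tool (i) (Minkowski) and dump the weight on the coefficients via $|x-y|\le\langle y\rangle$, you need $\langle y\rangle Q_1\in L^1(\mathbb{R}^-)$, which \emph{fails} for general $u\in H^{1,1}$: the entry $v_x$ satisfies $\langle y\rangle v_x\in L^2$ but not $\langle y\rangle v_x\in L^1$ (take $|u_x|\sim\langle x\rangle^{-3/2}(\log\langle x\rangle)^{-1}$).

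The missing idea --- which is precisely what the paper supplies before estimating its terms $g_1,g_2,g_3$ in \eqref{2.40} --- is an intermediate \emph{weighted} estimate
\begin{equation*}
\sup_{x\in\mathbb{R}^{-}}\bigl\|\langle x\rangle\,[\mu_-(x,\cdot)-\mu_-^{\infty}(x)e_1]\bigr\|_{L_z^2}\le C\,\|w\|_{L^{2,1}(\mathbb{R}^{-})},
\end{equation*}
obtained from the weighted version of the Fourier bound (Proposition~1 in \cite{Pel}) together with the invertibility \eqref{2.38a} of $I-K$ in the weighted setting on $\mathbb{R}^{-}$, where the kernel ratio $\langle x\rangle/\langle y\rangle\le1$ for $y\le x\le0$ makes the Neumann bound survive. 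With this in hand, Minkowski transfers the weight $|x-y|\le|y|$ onto $n_-$ rather than onto $Q_1$, and the unweighted coefficients are in $L^1$ (note $v_x\in L^{2,1}\Rightarrow v_x\in L^1$ by Cauchy--Schwarz), which closes Gronwall; the paper organizes the same bookkeeping through the auxiliary unknown $\eta=[\partial_z\mu_-^{(1)},\,\partial_z\mu_-^{(2)}-2ix\mu_-^{(2)}]^{T}$. The same repair is needed when you difference two potentials for the Lipschitz claim \eqref{2.43}. Your second part is sound in outline (the term $uv\,\partial_y\mu_-^{(2)}$ produced by the integration by parts contains $-2iz\,uv\,\mu_-^{(2)}$, i.e.\ the new unknown, and folds back into the same Volterra equation, with no $(x-y)$ weight, whence the full-line statement), matching the paper's equation $(I-K)[z(\mu_--\mu_-^{\infty}e_1)-\alpha_-]=\hat g\,e_2$; but as stated your proposal does not prove the key weighted bound on which the first part's derivative estimate rests.
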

\begin{proof}
As usual, we still only give the proof of $\mu_{-}(x,z)$ and the rest is similar.
Before we start the formal proof, we define the operator $K$
\begin{align}\label{2.11}
Kf(x,z):=\int_{-\infty}^{x}\begin{bmatrix}1&0\\0&e^{2iz(x-y)}\end{bmatrix}Q_1(y)f(y,z)dy,
\end{align}
and give its corresponding properties. For every column vector $f(x,z)\in L_{x}^{\infty}(\mathbb{R}, L_{z}^{2}(\mathbb{R}))$, the following estimate can be obtained by mathematical induction
\begin{align*}
\|(K^{n}f)(x,z)\|_{L_{x}^{\infty}(\mathbb{R}, L_{z}^{2}(\mathbb{R}))}\leq \frac{\|Q_1\|_{L^1(\mathbb{R})}^{n}}{n!}\|f(x,z)\|_{L_{x}^{\infty}(\mathbb{R}, L_{z}^{2}(\mathbb{R}))},
\end{align*}
which implies that the operator $I-K$ is invertible in the space $L_{x}^{\infty}(\mathbb{R}, L_{z}^{2}(\mathbb{R}))$ and
\begin{equation}\label{2.38}
\left\|(I-K)^{-1}\right\|_{L_{x}^{\infty}(\mathbb{R}, L_{z}^{2}(\mathbb{R}))\rightarrow L_{x}^{\infty}(\mathbb{R}, L_{z}^{2}(\mathbb{R}))}\leq \sum_{n=0}^{\infty}\frac{\|Q_{1}\|_{L^1(\mathbb{R})}^{n}}{n!}=e^{\|Q_{1}\|_{L^1(\mathbb{R})}}.
\end{equation}
In fact, the operator $I-K$ is also invertible in space $L_{x}^{\infty}(\mathbb{R}^{-}, L_{z}^{2}(\mathbb{R}))$, again with
\begin{align}\label{2.38a}
\left\|(I-K)^{-1}\right\|_{L_{x}^{\infty}(\mathbb{R}^{-}, L_{z}^{2}(\mathbb{R}))\rightarrow L_{x}^{\infty}(\mathbb{R}^{-}, L_{z}^{2}(\mathbb{R}))}\leq \sum_{n=0}^{\infty}\frac{\|Q_{1}\|_{L^1(\mathbb{R})}^{n}}{n!}=e^{\|Q_{1}\|_{L^1(\mathbb{R})}}.
\end{align}

Thanks to the definition of the operator $K$, we can rewrite the integral equation \eqref{2.8} as
\begin{equation}\label{2.33}
(I-K)\mu_{-}(x,z)=e_{1}.
\end{equation}
Subtracting $(I-K)\mu_{-}^{\infty}e_{1}$ from both sides of the above equation, we have
\begin{equation}\label{2.34}
(I-K)(\mu_{-}-\mu_{-}^{\infty}e_1)=e_{1}-(I-K)\mu_{-}^{\infty}e_1\triangleq g(x,z)e_{2},
\end{equation}
where
\begin{equation*}
g(x,z)=\int_{-\infty}^{x}e^{2iz(x-y)}w(y)dy,\ \ w(x)=\partial_{x}[v(x)e^{-\frac{1}{2i}\int_{-\infty}^xuvdy}].
\end{equation*}
If $u\in H^{1,1}(\mathbb{R})$, it is easy to obtain $w(x)\in L^{2,1}(\mathbb{R})$. By Proposition 1 in reference \cite{Pel} it follows that $g(x,z)\in L_{x}^{\infty}(\mathbb{R}, L_{z}^{2}(\mathbb{R}))$.
Then, we can obtain $\mu_{-}(x,z)-\mu_{-}^{\infty}(x)e_{1}\in L_{x}^{\infty}(\mathbb{R}, L_{z}^{2}(\mathbb{R}))$ by using \eqref{2.38} and \eqref{2.34}.

We take derivative of the equation \eqref{2.33} in $z$ and obtain
\begin{equation}\label{2.40}
(I-K)\eta(x,z)=g_{1}(x,z)e_{1}+g_{2}(x,z)e_{2}+g_{3}(x,z)e_{2},
\end{equation}
where
\begin{align*}
&\eta(x,z)=[\partial_{z}\mu_{-}^{(1)},\partial_{z}\mu_{-}^{(2)}-2ix\mu_{-}^{(2)}]^T,\\
&g_{1}(x,z)=\int_{-\infty}^{x}yu(y)\mu_{-}^{(2)}(y,z)dy,\\
&g_{2}(x,z)=-\int_{-\infty}^{x}ye^{2iz(x-y)}[2iv_{y}(y)-v^2(y)u(y)][\mu_{-}^{(1)}(y,z)-\mu_{-}^{\infty}(y)]dy,\\
&g_{3}(x,z)=-\int_{-\infty}^{x}ye^{2iz(x-y)}[2iv_{y}(y)-v^2(y)u(y)]\mu_{-}^{\infty}(y)dy.
\end{align*}
Combining \eqref{2.38a}, \eqref{2.34} and  Proposition 1 in \cite{Pel}, we can get the following estimates
\begin{align*}
&\sup_{x\in\mathbb{R}^{-}}\|g_{1}(x,z)e_{1}+g_{2}(x,z)e_{2}+g_{3}(x,z)e_{2}\|_{L_{z}^{2}(\mathbb{R})}\\
\leq& C\sup_{x\in\mathbb{R}^{-}}\|\langle x \rangle[\mu_{-}(x,z)-\mu_{-}^{\infty}(x)e_1]\|_{L_{z}^{2}(\mathbb{R})}+\tilde{C}\\
\leq& C\sup_{x\in\mathbb{R}^{-}}\|\langle x\rangle g(x,z)e_{2}\|_{L^{2}_{z}(\mathbb{R})}+\tilde{C} \\
\leq& C\|w\|_{L^{2,1}(\mathbb{R}^{-})}+\tilde{C} ,
\end{align*}
where $C$ and $\tilde{C} $ are two positive constants depend on $\|u\|_{H^{1,1}(\mathbb{R})}$. Using the property \eqref{2.38a} again gives $\eta(x,z)\in L_{x}^{\infty}(\mathbb{R}^{-}, L_{z}^{2}(\mathbb{R}))$, which implies $\partial_{z}[\mu_{-}(x,z)-\mu_{-}^{\infty}(x)e_{1}]\in L_{x}^{\infty}(\mathbb{R}^{-}, L_{z}^{2}(\mathbb{R}))$. Therefore, we complete the proof of $\mu_{-}$ in \eqref{2.30}.

Next, we prove that the map \eqref{2.43} is Lipschitz continuous. Suppose that $u, \tilde{u} \in H^{1,1}(\mathbb{R})$ satisfy $\|u\|_{H^{1,1}(\mathbb{R})}, \|\tilde{u}\|_{H^{1,1}(\mathbb{R})}\leq \gamma$ for some $\gamma>0$. Denote the corresponding Jost functions by $\mu_{-}(x,z)$ and $\tilde{\mu}_{-}(x,z)$ respectively. And we can define $\tilde{K}, \tilde{g}, \tilde{w}$ corresponding to $\tilde{u}$ similarly to the equations \eqref{2.11} and \eqref{2.34}, then
\begin{align*}
&(\mu_{-}-\mu_{-}^{\infty}e_{1})-(\tilde{\mu}_{-}-\tilde{\mu}_{-}^{\infty}e_1)\\
=&(I-K)^{-1}ge_{2}-(I-\tilde{K})^{-1}\tilde{g}e_{2}\\
=&(I-K)^{-1}(g-\tilde{g})e_{2}+(I-K)^{-1}(K-\tilde{K})(I-\tilde{K})^{-1}\tilde{g}e_{2}.
\end{align*}
According to \eqref{2.38} and Proposition 1 in  \cite{Pel}, we can know that
\begin{align*}
&\sup_{x\in\mathbb{R}}\|(I-K)^{-1}(g-\tilde{g})e_{2}\|_{L_{z}^{2}}\\
\leq& c_{1}(\gamma)\sup_{x\in\mathbb{R}}\|g-\tilde{g}\|_{L_{z}^{2}}\\
\leq &c_{1}(\gamma)\|w-\tilde{w}\|_{L^{2}}\\
\leq & c_{1}(\gamma)\|\mu_{-}^{\infty}-\tilde{\mu}_{-}^{\infty}\|_{L^{\infty}}+c_{2}(\gamma)\|u-\tilde{u}\|_{H^{1,1}}\\
\leq & c_{1}(\gamma)\|\int_{-\infty}^{x}(|u(y)|^{2}-|\tilde{u}(y)|^{2})dy\|_{L^{\infty}}+c_{2}(\gamma)\|u-\tilde{u}\|_{H^{1,1}}\\
\leq &c_{3}(\gamma)\|u-\tilde{u}\|_{H^{1,1}},
\end{align*}
and
\begin{align*}
&\sup_{x\in\mathbb{R}}\|(I-K)^{-1}(K-\tilde{K})(I-\tilde{K})^{-1}\tilde{g}e_{2}\|_{L_{z}^{2}}\\
\leq&c_{4}(\gamma) \sup_{x\in\mathbb{R}}\|(K-\tilde{K})(I-\tilde{K})^{-1}\tilde{g}e_{2}\|_{L_{z}^{2}}\\
\leq&c_{4}(\gamma) \sup_{x\in\mathbb{R}}\|(I-\tilde{K})^{-1}\tilde{g}e_{2}\|_{L_{z}^{2}}\|u-\tilde{u}\|_{H^{1,1}}\\
\leq&c_{4}(\gamma) \sup_{x\in\mathbb{R}}\|\tilde{g}e_{2}\|_{L_{z}^{2}}\|u-\tilde{u}\|_{H^{1,1}}\\
\leq &c_{4}(\gamma) \|\tilde{w}\|_{L^{2}}\|u-\tilde{u}\|_{H^{1,1}}\\
\leq &c_{4}(\gamma) \|u-\tilde{u}\|_{H^{1,1}},
\end{align*}
where $c_{3}(\gamma)$ and $c_{4}(\gamma)$ are two positive $\gamma$-dependent constants.
Thus, we have
\begin{align*}
\sup_{x\in\mathbb{R}}\|(\mu_{-}-\mu_{-}^{\infty}e_{1})-(\tilde{\mu}_{-}-\tilde{\mu}_{-}^{\infty}e_1)\|_{L_{z}^{2}}\leq c(\gamma) \|u-\tilde{u}\|_{H^{1,1}},
\end{align*}
where $c(\gamma)$ is a positive $\gamma$-dependent constant. A similar analysis of \eqref{2.40} using \eqref{2.38a} and Proposition 1 in \cite{Pel} shows that there exists $c(\gamma)$ such that
\begin{align*}
\sup_{x\in\mathbb{R}^{-}}\|\partial_{z}(\mu_{-}-\mu_{-}^{\infty}e_{1})-\partial_{z}(\tilde{\mu}_{-}-\tilde{\mu}_{-}^{\infty}e_1)\|_{L_{z}^{2}}\leq c(\gamma) \|u-\tilde{u}\|_{H^{1,1}},
\end{align*}
Therefore, we prove the Lipschitz continuity of \eqref{2.43}.

In order to prove \eqref{2.31}, we can imitate \eqref{2.34} and define $\hat{g}(x,z)$ as follows
\begin{align*}
&(I-K)[z(\mu_{-}-\mu_{-}^{\infty}e_{1})-\alpha_{-}]\\
=&zge_{2}-(I-K)\alpha_{-}\\
\triangleq& \hat{g}(x,z)e_{2}.
\end{align*}
By applying the similar analysis to the above equation, we can prove that the result \eqref{2.31} and the map \eqref{2.45} is Lipschitz continuous.
\end{proof}

\subsection{Regularity of the scattering data}
In order to build the scattering data, we need to consider the Jost solution of the original spectral problem \eqref{1.9}. Based on the transformation \eqref{2.3} and the definition of $\mu(x,z), \nu(x,z)$ \eqref{2.6}, we can define the normalized Jost functions for the spectral problem \eqref{1.9} as follows
\begin{equation}\label{2.51}
\varphi_{\pm}(x,k)=T_1^{-1}(x,k)\mu_{\pm}(x,z),\ \ \psi_{\pm}(x,k)=T_2^{-1}(x,k)\nu_{\pm}(x,z),\ \ k\in\mathbb{C}\setminus\{0\},
\end{equation}
then, the two Jost functions each satisfy the following Volterra's integral equation
\begin{equation}\label{2.53}
\varphi_{\pm}(x,k)=e_1+k\int_{\pm\infty}^{x}\begin{pmatrix}1&0\\0&e^{2ik^2(x-y)}\end{pmatrix}Q(y)\varphi_{\pm}(y,z)dy,
\end{equation}
\begin{equation}\label{2.54}
\psi_{\pm}(x,k)=e_2+k\int_{\pm\infty}^{x}\begin{pmatrix}e^{2ik^2(x-y)}&0\\0&1\end{pmatrix}Q(y)\psi_{\pm}(y,z)dy.
\end{equation}
When $k=0$, it is obvious that $\varphi_{\pm}(x,0)=e_1$, $\psi_{\pm}(x,0)=e_2$ and $\mu_{\pm}(x,0)=[1,v(x)]^{T}$, $\nu_{\pm}(x,0)=[-u(x),1]^{T}$.

We note that $\varphi_{\pm}=[\varphi_{\pm}^{(1)},\varphi_{\pm}^{(2)}]^{T}$ and $\psi_{\pm}=[\psi_{\pm}^{(1)},\psi_{\pm}^{(2)}]^{T}$.
According to Lemma \ref{l1}, \eqref{2.7} and the definition \eqref{2.51}, it is easy to obtain the following corollary.
\begin{corollary}\label{c2}
If $u\in H^{1,1}(\mathbb{R})$, then $\varphi_{\pm}(x,k)$ and $\psi_{\pm}(x,k)$ admit the following properties
\begin{enumerate}[label=(\roman*)]
  \item $\forall \ k^2\in\mathbb{R}\backslash\{0\}$, integral equations \eqref{2.53} and \eqref{2.54}  there exists unique solutions $ \varphi_{\pm}(x,k)\in L_{x}^{\infty}(\mathbb{R})$ and $ \psi_{\pm}(x,k)\in L_{x}^{\infty}(\mathbb{R})$ respectively.
  \item $\forall x\in\mathbb{R}$, $\varphi_{-}(x,k)$ and $\psi_{+}(x,k)$ are analytic in the first and third quadrant of the $k$ plane, $\varphi_{+}(x,k)$ and $\psi_{-}(x,k)$  are analytic in the second and fourth quadrant of the $k$ plane.
  \item $\varphi_{\pm}(x,k)\rightarrow e_1, \ \psi_{\pm}(x,k)\rightarrow e_2, \ \ x\rightarrow \pm\infty$.
  \item $\varphi_{\pm}^{(1)}(x,k), \ \psi_{\pm}^{(2)}(x,k)$ are even in $k$, $\varphi_{\pm}^{(2)}(x,k), \ \psi_{\pm}^{(1)}(x,k)$ are odd in $k$.
  \item $\varphi_-(x,k)=\begin{bmatrix}0&1\\ \pm1&0 \end{bmatrix} \overline{\psi_+(-x,\pm i\overline{k})},\
      \ \psi_-(x,k)=\begin{bmatrix}0&\pm1\\1 &0 \end{bmatrix}\overline{\varphi_+(-x,\pm i\overline{k})}$.
\end{enumerate}
\end{corollary}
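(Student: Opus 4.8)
The plan is to read off all five properties from the corresponding facts about $\mu_{\pm},\nu_{\pm}$ in the $z=k^{2}$ plane, established in Lemma \ref{l1} and \eqref{2.7}, by pushing them through the explicit algebraic relation \eqref{2.51}. The only genuinely new input is the last item (v), which is where the nonlocal reduction \eqref{sr} enters; items (i)--(iv) are purely a matter of transporting known information through the invertible matrices $T_{1}^{-1},T_{2}^{-1}$.

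First I would record that, for $k\ne 0$,
\[
T_{1}^{-1}(x,k)=\begin{bmatrix}1&0\\-\tfrac{v(x)}{2ik}&\tfrac{1}{2ik}\end{bmatrix},\qquad T_{2}^{-1}(x,k)=\begin{bmatrix}\tfrac{1}{2ik}&\tfrac{u(x)}{2ik}\\0&1\end{bmatrix},
\]
so that each $T_{j}^{-1}(x,\cdot)$ is rational in $k$ with its only singularity at $k=0$, and each has entries in $L_{x}^{\infty}(\mathbb{R})$ because $u,v\in H^{1,1}(\mathbb{R})\hookrightarrow L^{\infty}(\mathbb{R})$. For (i): when $k^{2}\in\mathbb{R}\setminus\{0\}$ we have $z=k^{2}\in\mathbb{R}$ and $k\ne0$, so Lemma \ref{l1} gives unique $\mu_{\pm},\nu_{\pm}\in L_{x}^{\infty}$ and the bounded invertible factors $T_{j}^{-1}$ transport existence, uniqueness and boundedness to $\varphi_{\pm},\psi_{\pm}$ via \eqref{2.51} (equivalently, one checks directly that \eqref{2.51} solves \eqref{2.53}--\eqref{2.54} and invokes Volterra uniqueness). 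For (ii): the map $k\mapsto z=k^{2}$ sends the first and third quadrants onto $\mathbb{C}^{+}$ and the second and fourth quadrants onto $\mathbb{C}^{-}$; since $T_{j}^{-1}$ is analytic in $k$ away from $0$ and $\mu_{-},\nu_{+}$ (resp. $\mu_{+},\nu_{-}$) are analytic for $z\in\mathbb{C}^{+}$ (resp. $z\in\mathbb{C}^{-}$) by Lemma \ref{l1}, the composition $\mu_{\pm}(x,k^{2})$ is analytic in $k$ on the corresponding quadrants, and the stated analyticity of $\varphi_{\pm},\psi_{\pm}$ follows. For (iii): as $x\to\pm\infty$ the decay $u,v\to0$ gives $T_{1}^{-1}\to\mathrm{diag}(1,\tfrac{1}{2ik})$ and $T_{2}^{-1}\to\mathrm{diag}(\tfrac{1}{2ik},1)$, which applied to the limits \eqref{2.7} yields $\varphi_{\pm}\to e_{1}$, $\psi_{\pm}\to e_{2}$. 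For (iv): because $\mu_{\pm},\nu_{\pm}$ depend on $k$ only through $z=k^{2}$ they are even in $k$, and combining this with the parity of the entries of $T_{j}^{-1}$ (the constant entries are even, the $1/k$ entries are odd) gives that $\varphi_{\pm}^{(1)},\psi_{\pm}^{(2)}$ are even and $\varphi_{\pm}^{(2)},\psi_{\pm}^{(1)}$ are odd.

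The main work is (v), which encodes the reduction \eqref{sr}. Taking $\sigma=1$, I would first convert $v(x)=i\overline{u(-x)}$ into the pointwise identities $\overline{u(-x)}=-iv(x)$ and $\overline{v(-x)}=-iu(x)$, equivalently $\overline{Q(-x)}=-i\begin{bmatrix}0&v(x)\\u(x)&0\end{bmatrix}$. The strategy is then a uniqueness argument at the level of the Volterra equations. Starting from \eqref{2.54} for $\psi_{+}$, I apply the composite transformation $(x,k)\mapsto(-x,\pm i\overline{k})$, take complex conjugates, change the integration variable $y\mapsto-y$, and finally multiply on the left by $\begin{bmatrix}0&1\\ \pm1&0\end{bmatrix}$. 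The constant term $e_{2}$ turns into $e_{1}$; the conjugation-plus-parameter-shift converts the exponential kernel of \eqref{2.54} into that of \eqref{2.53}; and the reduction identities together with the left multiplication turn $\overline{Q(-\cdot)}$ back into $Q(\cdot)$ with the correct scalar prefactor $k$. Hence $\begin{bmatrix}0&1\\ \pm1&0\end{bmatrix}\overline{\psi_{+}(-x,\pm i\overline{k})}$ satisfies exactly the Volterra equation \eqref{2.53} defining $\varphi_{-}$, and the uniqueness from (i) forces the two to agree. The companion identity for $\psi_{-}$ in terms of $\overline{\varphi_{+}}$ is obtained by the same computation with the roles of the two Zakharov--Shabat problems interchanged.

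The delicate point, and the step I expect to be the main obstacle, is the exact bookkeeping in (v): one must track the factors of $i$ produced by conjugating the reduction $v=i\overline{u(-x)}$, confirm that the choice $k\mapsto\pm i\overline{k}$ (so that $k^{2}\mapsto-\overline{k^{2}}$) is precisely what maps the exponential kernel of the $\psi_{+}$ equation onto that of the $\varphi_{-}$ equation while keeping the domain of analyticity fixed (the reflection $z\mapsto-\overline{z}$ preserves $\mathbb{C}^{+}$), and check that the two sign choices $\pm$ in the off-diagonal matrix are consistently paired with $\pm i\overline{k}$ so that the scalar coefficient comes out to be $+k$ in both cases. A useful sanity check, which I would perform first, is the boundary behaviour: as $x\to-\infty$ the right-hand side tends to $\begin{bmatrix}0&1\\ \pm1&0\end{bmatrix}\overline{e_{2}}=e_{1}$, matching the normalization of $\varphi_{-}$.
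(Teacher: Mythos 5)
Your proposal is correct and is essentially the paper's own (unwritten) argument: the paper simply asserts that the corollary follows from Lemma \ref{l1}, the asymptotics \eqref{2.7} and the definition \eqref{2.51}, which is precisely your scheme of transporting existence, uniqueness, analyticity, asymptotics and parity through the rational invertible factors $T_{1}^{-1},T_{2}^{-1}$ (using that $k\mapsto k^{2}$ maps quadrants I/III onto $\mathbb{C}^{+}$ and II/IV onto $\mathbb{C}^{-}$), with the symmetry (v) established by the standard Volterra-uniqueness computation under the reduction \eqref{sr}. One caveat: your kernel bookkeeping in (v) comes out right with the kernel $e^{-2ik^{2}(x-y)}$ in \eqref{2.54} (the sign forced both by deriving \eqref{2.54} from the Lax pair \eqref{1.9} and by the analyticity assertions of Lemma \ref{l1} via \eqref{2.51}); with the sign as actually printed in \eqref{2.54} and \eqref{2.9} — an apparent typo in the paper — the conjugated, reflected exponential would be $e^{-2ik^{2}(x-s)}$ and would not match \eqref{2.53}, so your computation silently uses the corrected kernel, as it should.
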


By the theory of ODE, we can define the following scattering matrix associated with the spectral problem \eqref{1.9}
\begin{align}\label{2.59}
\begin{aligned}
&\begin{bmatrix}\varphi_-(x,k)e^{-ik^2x}&\psi_-(x,k)e^{ik^2x}\end{bmatrix}\\
=&\begin{bmatrix}\varphi_+(x,k)e^{-ik^2x}&\psi_+(x,k)e^{ik^2x}\end{bmatrix}\begin{bmatrix}a(k)&c(k)\\b(k)&d(k)\end{bmatrix},
\end{aligned}
\end{align}
then, the scattering data $a(k), b(k), c(k), d(k)$ are related to the Wronskian of the system via the relations below
\begin{align}
&a(k)=W(\varphi_-(x,k)e^{-ik^2x},\psi_+(x,k)e^{ik^2x}),\label{2.60}\\
&b(k)=W(\varphi_+(x,k)e^{-ik^2x},\varphi_-(x,k)e^{-ik^2x}),\label{2.61}\\
&c(k)=W(\psi_-(x,k)e^{ik^2x},\psi_+(x,k)e^{ik^2x}),\label{2.61c}\\
&d(k)=W(\varphi_+(x,k)e^{-ik^2x},\psi_-(x,k)e^{ik^2x})\label{2.60d}.
\end{align}

The scattering data $a(k), b(k)$ and $d(k)$ can also be expressed in the following integral form
\begin{align}
&a(k)=1+k\int_{-\infty}^{+\infty}u(y)\varphi_-^{(2)}(y,k)dy=1-k\int_{-\infty}^{+\infty}v(y)\psi_+^{(1)}(y,k)dy,\label{a}\\
&b(k)=k\int_{-\infty}^{+\infty}v(y)\varphi_-^{(1)}(y,k)e^{-2ik^2y}dy=k\int_{-\infty}^{+\infty}u(y)\varphi_+^{(1)}(y,k)e^{-2ik^2y}dy,\label{b}\\
&d(k)=1-k\int_{-\infty}^{+\infty}u(y)\varphi_+^{(2)}(y,k)dy=1+k\int_{-\infty}^{+\infty}v(y)\psi_-^{(1)}(y,k)dy.\label{d}
\end{align}
Further, we can obtain the following properties of the scattering data from  Lemma \ref{l2}, \ref{l3}, Corollary \ref{c2} and the relation \eqref{2.51}.
\begin{corollary}\label{c2+}
If $u\in H^{1,1}(\mathbb{R})$, then the scattering data $a(k), b(k), c(k), d(k)$ admit the following properties
\begin{enumerate}[label=(\roman*)]
  \item $a(k)$ and $d(k)$ extend analytically to the first and third quadrant of the $k$ plane and the first and third quadrant of the $k$ plane, respectively.
  \item $a(k)\rightarrow e^{-\frac{1}{2i}\int_{-\infty}^{+\infty}uvdy}\triangleq a_{\infty}, \ $ $d(k)\rightarrow e^{\frac{1}{2i}\int_{-\infty}^{+\infty}uvdy}\triangleq d_{\infty},\ \ |k|\rightarrow\infty.$
  \item $a(k),d(k)$ are even in $k$, and $b(k),c(k)$ are odd in $k$.
  \item $a(k)=\overline{a(\pm i\overline{k})}, \ d(k)=\overline{d(\pm i\overline{k})}, \ c(k)=\mp\overline{b(\pm i\overline{k})},\ \ k\in\mathbb{R}\cup i\mathbb{R}.$
  \item $a(k)d(k)-b(k)c(k)=1,\ \ k\in\mathbb{R}\cup i\mathbb{R}.$
\end{enumerate}
\end{corollary}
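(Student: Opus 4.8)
The five properties are read off from the two equivalent descriptions of the scattering data already in hand: the Wronskian representations \eqref{2.60}--\eqref{2.60d} and the integral representations \eqref{a}--\eqref{d}, fed by Lemmas \ref{l2}--\ref{l3}, Corollary \ref{c2}, and the conversion \eqref{2.51}. I would treat them in order of increasing delicacy. \textbf{Analyticity and asymptotics ((i)--(ii)).} For (i) I would invoke the Wronskian formulas: since $a(k)=W(\varphi_-,\psi_+)$ (the factors $e^{\mp ik^2x}$ cancel in the Wronskian) and both $\varphi_-,\psi_+$ are analytic in the first and third quadrants by Corollary \ref{c2}(ii), $a$ inherits that analyticity domain, and likewise $d=W(\varphi_+,\psi_-)$ inherits the analyticity domain of $\varphi_+,\psi_-$. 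For (ii) I would start from \eqref{a} and \eqref{d}, use \eqref{2.51} to rewrite $k\varphi_{\pm}^{(2)}=\tfrac{1}{2i}(\mu_{\pm}^{(2)}-v\mu_{\pm}^{(1)})$, and pass to $|k|\to\infty$ using Lemma \ref{l2} ($\mu_{\pm}^{(2)}\to 0$, $\mu_{\pm}^{(1)}\to\mu_{\pm}^{\infty}$); the remaining integral telescopes through $\partial_y\mu_-^{\infty}=-\tfrac{1}{2i}uv\,\mu_-^{\infty}$, giving $a(k)\to\mu_-^{\infty}(+\infty)=e^{-\frac{1}{2i}\int_{\mathbb{R}}uv}=a_\infty$, and symmetrically $d(k)\to d_\infty$.

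\textbf{Parity and determinant ((iii),(v)).} These are the routine items. For (iii) I would use the integral representations together with the parities of Corollary \ref{c2}(iv) ($\varphi_\pm^{(1)},\psi_\pm^{(2)}$ even and $\varphi_\pm^{(2)},\psi_\pm^{(1)}$ odd) and the fact that $k^2$ is even: in $a,d$ the integrand is $k\cdot(\text{odd})=(\text{even})$, so $a,d$ are even, whereas in $b,c$ it is $k\cdot(\text{even})=(\text{odd})$, so $b,c$ are odd. For (v) I would take determinants on both sides of the scattering relation \eqref{2.59}: the left-hand side equals $W(\varphi_-,\psi_-)$ and the right-hand side equals $W(\varphi_+,\psi_+)\,\det\begin{bmatrix}a&c\\b&d\end{bmatrix}$. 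Both Wronskians are $x$-independent (traceless coefficient matrix), and evaluating at $x\to-\infty$ and $x\to+\infty$ respectively with the normalizations $\varphi_\pm\to e_1,\psi_\pm\to e_2$ from Corollary \ref{c2}(iii) gives $W(e_1,e_2)=1$ in each case, whence $ad-bc=1$ on $\mathbb{R}\cup i\mathbb{R}$.

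\textbf{The symmetry relations (iv) — the main obstacle.} This is the genuinely new point and the one flagged in the Remark, since the nonlocal reduction \eqref{sr} replaces the usual involutions. The plan is to substitute the Jost symmetry of Corollary \ref{c2}(v) into the Wronskian formulas. Writing $J_\pm=\begin{bmatrix}0&1\\\pm1&0\end{bmatrix}$, the relation $\varphi_-(x,k)=J_\pm\overline{\psi_+(-x,\pm i\overline{k})}$, together with its iterate $\psi_+(x,k)=J_\pm\overline{\varphi_-(-x,\pm i\overline{k})}$ obtained by conjugating, sending $x\to-x$, $k\to\pm i\overline{k}$ and using $\pm i\,\overline{(\pm i\overline{k})}=k$, can be inserted into $a(k)=W(\varphi_-,\psi_+)$. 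Exploiting $W(J_\pm f,J_\pm g)=\det(J_\pm)\,W(f,g)$, the $x$-independence of the Wronskian, and $\overline{W(f,g)}=W(\overline{f},\overline{g})$, one reduces $a(k)$ to $\overline{a(\pm i\overline{k})}$; the relations $d(k)=\overline{d(\pm i\overline{k})}$ and $c(k)=\mp\overline{b(\pm i\overline{k})}$ follow the same pattern from the remaining formulas in Corollary \ref{c2}(v). I expect the real difficulty to be the simultaneous bookkeeping of the spatial reflection $x\to-x$, the complex conjugation, and the spectral map $k\mapsto\pm i\overline{k}$, together with the exact signs generated by $\det J_\pm=\mp1$ and $J_\pm^{2}=\pm I$; getting these consistent with the reduction $u(-y)=i\overline{v(y)}$ is where the computation must be carried out with care rather than by analogy with the local DNLS case.
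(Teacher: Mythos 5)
Your proposal is correct and takes the same route the paper indicates: the paper states this corollary without a written proof, deriving it from the Wronskian formulas \eqref{2.60}--\eqref{2.60d}, the integral representations \eqref{a}--\eqref{d}, Lemma \ref{l2}, Corollary \ref{c2} and the relation \eqref{2.51} (with property (iv) credited to \cite{Abl}), and your items (i)--(v) simply carry out exactly those derivations --- including, correctly, that $d(k)$ inherits the second/fourth-quadrant analyticity of $\varphi_{+},\psi_{-}$, so the corollary's ``first and third\ldots respectively'' for $d(k)$ is a typo that your argument silently fixes. The one sign to repair when writing out (iv) in full: since $J_{\pm}^{-1}=\pm J_{\pm}$, iterating Corollary \ref{c2}(v) yields $\psi_{+}(x,k)=\pm J_{\pm}\,\overline{\varphi_{-}(-x,\pm i\overline{k})}$ rather than your unsigned $J_{\pm}\,\overline{\varphi_{-}(-x,\pm i\overline{k})}$; with the extra $\pm$ the factors $\pm\det(J_{\pm})=-1$ and $W(f,g)=-W(g,f)$ combine to give $a(k)=\overline{a(\pm i\overline{k})}$ on both branches, whereas the unsigned iterate would produce the false relation $a(k)=\pm\overline{a(\pm i\overline{k})}$.
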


Note that property (iv) \cite{Abl} in Corollary \ref{c2+}, the scattering data $a(k)$ and $d(k)$ are not related, which is a distinctive feature of the nDNLS equation compared to its conventional counterpart. This feature makes our subsequent analysis more difficult, and we will subsequently give specific ways to overcome this difficulty.

\begin{lemma}\label{l4}
If $u\in H^{1,1}(\mathbb{R})$, then we have
\begin{equation}\label{2.65}
a(k)-a_{\infty},\  d(k)-d_{\infty},\ kb(k),\ k^{-1}b(k),\ kc(k),\ k^{-1}c(k) \in H_{z}^{1}(\mathbb{R}),
\end{equation}
and the map
\begin{equation}\label{2.75}
u\mapsto[a(k)-a_{\infty},  d(k)-d_{\infty}, kb(k), k^{-1}b(k), kc(k), k^{-1}c(k)]
\end{equation}
is Lipschitz continuous from $H^{1,1}(\mathbb{R})$ to $H_{z}^{1}(\mathbb{R})$.

Moreover, if $u\in H^{2}(\mathbb{R})\cap H^{1,1}(\mathbb{R})$, then we have
\begin{equation}\label{2.66}
kb(k),\ k^{-1}b(k),\ kc(k), \ k^{-1}c(k) \in L_{z}^{2,1}(\mathbb{R}).
\end{equation}
and the map
\begin{equation}\label{2.76}
u\mapsto[kb(k),k^{-1}b(k), kc(k), k^{-1}c(k)]
\end{equation}
is Lipschitz continuous from $H^{2}(\mathbb{R})\cap H^{1,1}(\mathbb{R})$ to $L_{z}^{2,1}(\mathbb{R})$.
\end{lemma}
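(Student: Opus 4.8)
The plan is to express every scattering coefficient as a function of the spectral variable $z=k^2$ and then read off its $z$-regularity from Lemmas \ref{l2} and \ref{l3}. Since $a,d$ are even and $b,c$ are odd in $k$ (Corollary \ref{c2+}(iii)), the combinations $a-a_\infty,\ d-d_\infty,\ k^{\pm1}b,\ k^{\pm1}c$ are genuine functions of $z$, so the claims \eqref{2.65}, \eqref{2.66} are meaningful. The decisive device I would use is to evaluate the Wronskian representations \eqref{2.60}--\eqref{2.60d} at the single point $x=0$: each of these Wronskians pairs two Jost functions that are controlled on \emph{opposite} half-lines (e.g. $a=W(\varphi_-,\psi_+)$, and $\varphi_-\sim\mu_-$ is good on $\mathbb R^-$ while $\psi_+\sim\nu_+$ is good on $\mathbb R^+$), and sampling at $x=0$ places both in the closure of the half-line where Lemma \ref{l3} actually supplies uniform $H^1_z$ bounds. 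Using $\varphi_\pm=T_1^{-1}\mu_\pm$, $\psi_\pm=T_2^{-1}\nu_\pm$ from \eqref{2.51} this gives, for instance, $a(k)=\mu_-^{(1)}(0,z)\nu_+^{(2)}(0,z)+\tfrac1{4z}\bigl(\mu_-^{(2)}(0,z)-v(0)\mu_-^{(1)}(0,z)\bigr)\bigl(\nu_+^{(1)}(0,z)+u(0)\nu_+^{(2)}(0,z)\bigr)$ and $kb(k)=\tfrac1{2i}\bigl(\mu_+^{(1)}(0,z)\mu_-^{(2)}(0,z)-\mu_+^{(2)}(0,z)\mu_-^{(1)}(0,z)\bigr)$. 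The entries $kc,k^{-1}c$ need not be treated separately: by the symmetry Corollary \ref{c2+}(iv) they equal conjugates of $kb,k^{-1}b$ composed with $z\mapsto-z$, and reflection and conjugation are isometries of the relevant spaces.

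For the $H^1_z$ statement \eqref{2.65} I would argue that the Jost values $\mu_\pm(0,\cdot)-\mu_\pm^\infty(0)e_1$, $\nu_\pm(0,\cdot)-\nu_\pm^\infty(0)e_2$ lie in $H^1_z$ by Lemma \ref{l3}, and that $H^1(\mathbb R)$ is a multiplication algebra; hence the products above are in $H^1_z$ after subtracting the correct limits ($a_\infty=\mu_-^\infty(0)\nu_+^\infty(0)$, etc.). The only subtlety is the factor $1/z$ created by the $1/(2ik)$ entries of $T_1^{-1},T_2^{-1}$, which is singular at $z=0$. This is where the explicit endpoint values $\mu_\pm(x,0)=[1,v(x)]^{T}$ and $\nu_\pm(x,0)=[-u(x),1]^{T}$ are essential: they force $\mu_-^{(2)}(0,z)-v(0)\mu_-^{(1)}(0,z)$, $\nu_+^{(1)}(0,z)+u(0)\nu_+^{(2)}(0,z)$ and the numerator of $kb$ to vanish at $z=0$, so that after dividing by $z$ one obtains a function regular at the origin; combined with the decay $\mu_\pm^{(2)}(0,z)\to0$ as $|z|\to\infty$ from Lemma \ref{l2}, this yields membership in $H^1_z$. (The same vanishing is what makes the further division defining $k^{-1}b$ admissible.) The Fourier-transform reading of \eqref{b}, namely $k^{-1}b(z)=\int_{\mathbb R}v\,\mu_-^{(1)}e^{-2izy}\,dy$, provides the intuition for the decay–smoothness duality that governs the second half of the lemma.

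The main obstacle is the sharper conclusion \eqref{2.66}, for which I would push the large-$z$ expansion one order further. By \eqref{2.31}--\eqref{2.32} of Lemma \ref{l3}, $z\mu_\pm^{(2)}(0,z)\to\alpha_\pm^{(2)}(0)$ and $z(\mu_\pm^{(1)}(0,z)-\mu_\pm^\infty(0))\to\alpha_\pm^{(1)}(0)$ with remainders in $L^2_z$, so $z\cdot kb$ has the leading constant $\mu_+^\infty(0)\alpha_-^{(2)}(0)-\alpha_+^{(2)}(0)\mu_-^\infty(0)$. For $z\cdot kb\in L^2_z$ (i.e. $kb\in L^{2,1}_z$) this constant \emph{must} vanish, and a direct computation using $\alpha_\pm^{(2)}(x)=-\tfrac1{2i}\partial_x[v\,\mu_\pm^\infty]$ together with the common logarithmic derivative $-\tfrac1{2i}uv$ of $\mu_+^\infty$ and $\mu_-^\infty$ shows it does cancel; the surviving terms are products of a bounded factor with an $L^2_z$ remainder supplied by Lemma \ref{l3}. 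This cancellation, and the invocation of \eqref{2.31}--\eqref{2.32}, is exactly where the extra regularity $u\in H^2(\mathbb R)\cap H^{1,1}(\mathbb R)$ is consumed—morally because the extra power of $z$ costs one more $y$-derivative of $v\,\mu_-^{(1)}$.

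Finally, the Lipschitz bounds \eqref{2.75}, \eqref{2.76} follow by applying the preceding representations to the difference of two potentials $u,\tilde u$ with $\|u\|,\|\tilde u\|\le\gamma$. Each scattering quantity is multilinear in the Jost values and polynomial in $u,v$ and their values at $0$, so subtracting and telescoping via $\|FG-\tilde F\tilde G\|\le\|F\|\,\|G-\tilde G\|+\|F-\tilde F\|\,\|\tilde G\|$ reduces everything to the already-established Lipschitz continuity of the maps \eqref{2.43} and \eqref{2.45} from Lemma \ref{l3} and the uniform bounds of Lemma \ref{l1}. The $1/z$ factors in the differences are handled by the same vanishing-at-$z=0$ argument applied to the difference of numerators, so no new difficulty arises there.
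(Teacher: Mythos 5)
Your blueprint coincides with the paper's at almost every junction: Wronskians \eqref{2.60}--\eqref{2.60d} sampled at $x=0$, the transformation \eqref{2.51}, $kb$ and $kc$ from the determinant identity \eqref{2.70}, the Banach-algebra property of $H_{z}^{1}(\mathbb{R})$, the cancellation of the leading constant $\mu_{+}^{\infty}(0)\alpha_{-}^{(2)}(0)-\alpha_{+}^{(2)}(0)\mu_{-}^{\infty}(0)$ behind \eqref{2.66} (which you verify explicitly, whereas the paper compresses it into ``follows from \eqref{2.31}, \eqref{2.32} and the Wronskians''), and the telescoping argument for the Lipschitz claims; your reduction of $c$ to $b$ via Corollary \ref{c2+}(iv) and the reflection $z\mapsto -z$ is a legitimate shortcut. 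But at the technical crux you diverge, and there your argument has a genuine gap: the treatment of the $1/(2ik)$ factors. You propose that since $\mu_{-}^{(2)}(0,z)-v(0)\mu_{-}^{(1)}(0,z)$ and $\nu_{+}^{(1)}(0,z)+u(0)\nu_{+}^{(2)}(0,z)$ vanish at $z=0$, dividing by $z$ yields a function ``regular at the origin,'' hence in $H_{z}^{1}$. That inference is false: for $f\in H^{1}(\mathbb{R})$ with $f(0)=0$ one only gets $|f(z)|\lesssim|z|^{1/2}$ and, by Hardy's inequality, $f/z\in L^{2}$; the derivative $(f/z)'=f'/z-f/z^{2}$ need not be square-integrable, since $f'\in L^{2}$ has no trace at $z=0$. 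Even for your product of two vanishing factors, the derivative term $g_{1}'\,g_{2}/z$ is bounded near the origin only by $|g_{1}'(z)|\,|z|^{-1/2}$, which is not in $L^{2}$ in general; and for $k^{-1}b=(kb)/z$ the numerator has merely a simple zero, so the quotient is not even bounded a priori. Your route therefore yields at best $L^{2}_{z}$ control of these quotients, not the $H_{z}^{1}$ membership that \eqref{2.65} asserts and that the rest of the paper consumes.

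The paper's device --- which you mention only as ``intuition'' --- is to never divide an $H^{1}_{z}$ function by $z$ at all. From the Volterra equation \eqref{2.53} it extracts the closed representation $k^{-1}\varphi_{\pm}^{(2)}(0,k)=\int_{\pm\infty}^{0}v(y)e^{-2izy}\mu_{\pm}^{(1)}(y,z)\,dy$ and estimates this Fourier-type integral directly: the $L^{2}_{z}$ bound comes from splitting $\mu_{\pm}^{(1)}=\mu_{\pm}^{\infty}+(\mu_{\pm}^{(1)}-\mu_{\pm}^{\infty})$ and invoking Lemma \ref{l3} together with Proposition 1 of \cite{Pel}, and $\partial_{z}$ of the representation is handled the same way using $v\in L^{2,1}(\mathbb{R})$, which is exactly where $u\in H^{1,1}(\mathbb{R})$ enters; the same applies to $k^{-1}\psi_{\pm}^{(1)}(0,k)$. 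With these memberships in hand, the splitting \eqref{2.69+} attaches only a \emph{single} factor $k^{-1}$ to $\varphi_{-}^{(2)}$ and replaces the companion factor by $2ik\psi_{+}^{(1)}(0,k)=\nu_{+}^{(1)}(0,z)+u(0)\nu_{+}^{(2)}(0,z)$, so no $1/z$ or $1/z^{2}$ of an $H^{1}$ function ever appears; your explicit $\tfrac{1}{4z}$ formula for $a(k)$ forces precisely the division your argument cannot justify, and likewise $k^{-1}b$ must be handled through the determinant with the entries $k^{-1}\varphi_{\pm}^{(2)}(0,k)$, not by dividing $kb$ by $z$. To repair the proposal, substitute the integral-representation estimates for the vanishing-at-the-origin step. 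One point in your favor: once \eqref{2.65} is known, $k^{-1}b,\ k^{-1}c\in L^{2,1}_{z}$ is immediate because $z\,k^{-1}b=kb\in L^{2}_{z}$, so only $kb,kc\in L^{2,1}_{z}$ genuinely require the cancellation you computed and the extra hypothesis $u\in H^{2}(\mathbb{R})$.
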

\begin{proof}
The proof mainly relies on Lemma \ref{l3} and the relation \eqref{2.51}, where \eqref{2.51} implies that
\begin{align}\label{2.51a}
\left\{\begin{aligned}
&\varphi_{\pm}^{(1)}(x,k)=\mu_{\pm}^{(1)}(x,z),\\
&\varphi_{\pm}^{(2)}(x,k)=-\frac{1}{2ik}v(x)\mu_{\pm}^{(1)}(x,z)+\frac{1}{2ik}\mu_{\pm}^{(2)}(x,z),
\end{aligned}
\right.
\end{align}
and
\begin{align}\label{2.51b}
\left\{\begin{aligned}
&\psi_{\pm}^{(1)}(x,k)=\frac{1}{2ik}\nu_{\pm}^{(1)}(x,z)+\frac{1}{2ik}u(x)\nu_{\pm}^{(2)}(x,z),\\
&\psi_{\pm}^{(2)}(x,k)=\nu_{\pm}^{(2)}(x,z).
\end{aligned}
\right.
\end{align}

By the Wronskian determinant \eqref{2.60}, property (ii) in the corollary \ref{c2+} and \eqref{2.51a}, we have
\begin{align}\label{2.69}
\begin{aligned}
a(k)-a_{\infty}=&\det\begin{bmatrix}\mu_{-}^{(1)}(x,z)&\psi_{+}^{(1)}(x,k)\\ \varphi_{-}^{(2)}(x,k)&\nu_{+}^{(2)}(x,z)\end{bmatrix}-\det\begin{bmatrix}\mu_{-}^{\infty}(x)&0\\0&\nu_{+}^{\infty}(x)\end{bmatrix}\\
=&[\mu_{-}^{(1)}(0,z)-\mu_{-}^{\infty}(0)][\nu_{+}^{(2)}(0,z)-\nu_{+}^{\infty}(0)]-\varphi_{-}^{(2)}(0,k)\psi_{+}^{(1)}(0,k)\\
&+\mu_{-}^{\infty}(0)[\nu_{+}^{(2)}(0,z)-\nu_{+}^{\infty}(0)]+\nu_{+}^{\infty}(0)[\mu_{-}^{(1)}(x,z)-\mu_{-}^{\infty}(0)].
\end{aligned}
\end{align}
We rewrite the term $\varphi_{-}^{(2)}(0,k)\psi_{+}^{(1)}(0,k)$ in \eqref{2.69} as
\begin{align}\label{2.69+}
\begin{aligned}
\varphi_{-}^{(2)}(0,k)\psi_{+}^{(1)}(0,k)=&\frac{1}{2i}k^{-1}\varphi_{-}^{(2)}(0,k)u(0)\nu_{+}^{\infty}(0)\\
&+\frac{1}{2i}k^{-1}\varphi_{-}^{(2)}(0,k)[2ik\psi_{+}^{(1)}(0,k)-u(0)\nu_{+}^{\infty}(0)],
\end{aligned}
\end{align}
where
\begin{align*}
2ik\psi_{+}^{(1)}(0,k)-u(0)\nu_{+}^{\infty}(0)=\nu_{+}^{(1)}(0,z)+u(0)[\nu_{+}^{(2)}(0,z)-\nu_{+}^{\infty}(0,z)]\in H_{z}^{1}(\mathbb{R}),
\end{align*}
according to \eqref{2.51b} and \eqref{2.30}.

Below we prove that $k^{-1}\varphi_{\pm}^{(2)}(0,k)\in H_{z}^{1}(\mathbb{R})$. From the integral expression \eqref{2.53} we can directly obtain
\begin{align*}
k^{-1}\varphi_{\pm}^{(2)}(0,k)=\int_{\pm\infty}^{0}v(y)e^{-2izy}\mu_{\pm}^{(1)}(y,z)dy,
\end{align*}
therefore,
\begin{align*}
&\|k^{-1}\varphi_{\pm}^{(2)}(0,k)\|_{L_{z}^{2}}\\ \leq&\|\int_{\pm\infty}^{0}v(y)e^{-2izy}[\mu_{\pm}^{(1)}(y,z)-\mu_{\pm}^{\infty}(y)]dy\|_{L_{z}^{2}}+\|\int_{\pm\infty}^{0}v(y)e^{-2izy}\mu_{\pm}^{\infty}(y)dy\|_{L_{z}^{2}}\\
\leq&\int_{\pm\infty}^{0}|v(y)|\|\mu_{\pm}^{(1)}(y,z)-\mu_{\pm}^{\infty}(y)\|_{L^{2}_{z}}dy+\|v\|_{L^{2}}\\
\leq&\sup_{x\in\mathbb{R}}\|\mu_{\pm}^{(1)}(x,z)-\mu_{\pm}^{\infty}(x)\|_{L^{2}_{z}}\|v\|_{L^{1}}+\|v\|_{L^{2}}.
\end{align*}
And $\partial_{z}(k^{-1}\varphi_{\pm}^{(2)}(0,k))\in L_{z}^{2}(\mathbb{R})$ can be proved similarly, therefore we have $ k^{-1}\varphi_{\pm}^{(2)}(0,k)\in H_{z}^{1}(\mathbb{R})$.

By \eqref{2.69+} and  the Banach algebra property of $H_{z}^{1}(\mathbb{R})$, we have
\begin{equation*}
\varphi_{-}^{(2)}(0,k) \psi_{+}^{(1)}(0,k)\in H_{z}^{1}(\mathbb{R}).
\end{equation*}
Further combining \eqref{2.69} and \eqref{2.30} we can get $a(k)-a_{\infty}\in H_{z}^{1}(\mathbb{R})$. By using the same process as above, we can obtain $d(k)-d_{\infty}\in H_{z}^{1}(\mathbb{R})$.

From the Wronskian determinant representation \eqref{2.61} and \eqref{2.51a} we have
\begin{align}\label{2.70}
\begin{aligned}
2ikb(k)=&\det\begin{bmatrix}\varphi_{+}^{(1)}(0,k)&\varphi_{-}^{(1)}(0,k)\\2ik\varphi_{+}^{(2)}(0,k)&2ik\varphi_{-}^{(2)}(0,k)\end{bmatrix}\\
=&\det\begin{bmatrix}\mu_{+}^{(1)}(0,z)&\mu_{-}^{(1)}(0,z)\\ \mu_{+}^{(2)}(0,z)&\mu_{-}^{(2)}(0,z)\end{bmatrix},
\end{aligned}
\end{align}
so $kb(k)\in H_{z}^{1}(\mathbb{R})$ is easily obtained according to \eqref{2.30}. The same is true for $kc(k)$.

Again, thanks to the Wronskian determinant of the scattering data, using \eqref{2.61c} we get the following equation
\begin{align*}
k^{-1}b(k)=\det\begin{bmatrix}\mu_{+}^{(1)}(0,z)&\mu_{-}^{(1)}(0,z)\\ k^{-1}\varphi_{+}^{(2)}(0,k)&k^{-1}\varphi_{-}^{(2)}(0,k)\end{bmatrix},
\end{align*}
thus, $k^{-1}b(k)\in H_{z}^{1}(\mathbb{R})$. The regularity $k^{-1}\psi_{\pm}^{(1)}(0,k) \in  H_{z}^{1}(\mathbb{R})$ can be obtained similarly, as proven by $k^{-1}\varphi_{\pm}^{(2)}(0,k)\in H_{z}^{1}(\mathbb{R})$, that therefore $k^{-1}c(k)\in H_{z}^{1}(\mathbb{R})$. This completes the proof of \eqref{2.65}.

The conclusion \eqref{2.66} can be obtained from \eqref{2.31}, \eqref{2.32} and the  Wronskian determinants \eqref{2.61}, \eqref{2.61c}. The Lipschitz continuity \eqref{2.75} and \eqref{2.76} follows from the Lipschitz continuity \eqref{2.43} and \eqref{2.45}.
\end{proof}

\section{Construction of the RH problems}
\subsection{The relationship between RH problems}

We define the reflection coefficients by
\begin{align}\label{3.1}
r_1(k):=\frac{b(k)}{a(k)},\ \ r_2(k):=\frac{c(k)}{d(k)},\ \ k\in \mathbb{R}\cup i\mathbb{R}.
\end{align}

From the relation \eqref{2.59} and the corollary \ref{c2} , \ref{c2+}, we can define a sectionally analytical matrix
\begin{align}
\Psi(x,k):=\left\{
\begin{aligned}
&[\frac{\varphi_{-}(x,k)}{a(k)},\psi_{+}(x,k)],\ \ \text{Im}k^2>0,\\
&[\varphi_+(x,k), \frac{\psi_-(x,k)}{d(k)}],\ \ \text{Im}k^2<0,
\end{aligned}
\right.
\end{align}
then $\Psi(x,k)$ solves the following RH problem.
\begin{rhp}\label{RH1}
Find a matrix-valued function $\Psi(x,k)$ that satisfies the following conditions
\begin{itemize}
  \item Analyticity: $\Psi(x,k)$  is analytical in $\mathbb{C}\setminus \{\mathbb{R}\cup i\mathbb{R}\}$.
  \item Jump condition:  $\Psi(x,k)$ has continuous boundary values $\Psi_{\pm}(x,k)$ on $\mathbb{R}\cup i\mathbb{R}$ and
  \begin{align}\label{3.6}
  \Psi_+(x,k)-\Psi_-(x,k)=\Psi_-(x,k)S(x,k), \ \ k\in \mathbb{R}\cup i\mathbb{R},
  \end{align}
  where
  \begin{align}\label{3.7}
  S(x,k)=\begin{bmatrix}-r_1(k)r_2(k)&-r_2(k)e^{-2ik^2x}\\r_1(k)e^{2ik^2x}&0\end{bmatrix}.
  \end{align}
  \item Asymptotic condition:
  \begin{align}\label{3.9}
  \Psi(x,k)\rightarrow[e^{\frac{1}{2i}\int^{+\infty}_{x}uvdy}e_1,e^{-\frac{1}{2i}\int^{+\infty}_{x}uvdy}e_2]\triangleq\Psi_{\infty}(x),\ \ |k|\rightarrow\infty.
  \end{align}
\end{itemize}
\end{rhp}

Notice that the reconstruction formulas \eqref{2.17} and \eqref{2.18} are built in the $z$-plane, so we next need to consider the RH problem in the $z$-plane.
First, the reflection coefficients in the $z$-plane are established as follows
\begin{align}\label{3.17}
r_{-}(z):=2ikr_1(k)=\frac{2ikb(k)}{a(k)},\ \ r_{+}(z):=-\frac{r_2(k)}{2ik}=-\frac{c(k)}{2ikd(k)},\ \ z\in\mathbb{R}.
\end{align}

Starting from \eqref{2.59} and using the relation \eqref{2.51}, a sectionally analytical matrix function can be defined by
\begin{align}\label{3.22}
\Gamma(x,z):=\left\{
\begin{aligned}
&[\frac{\mu_-(x,z)}{a(z)},\gamma_+(x,z)],\ \ \text{Im}z>0,\\
&[\mu_+(x,z),\frac{\gamma_{-}(x,z)}{d(z)}],\ \ \text{Im}z<0,
\end{aligned}
\right.
\end{align}
where
\begin{equation}\label{3.13}
\begin{aligned}
\gamma_{\pm}(x,z)=&\frac{1}{2ik}T_1(x,k)T_2^{-1}(x,k)\nu_{\pm}(x,z)\\
=&-\frac{1}{4z}\begin{bmatrix}1&u(x)\\v(x)&u(x)v(x)-4z\end{bmatrix}\nu_{\pm}(x,z),
\end{aligned}
\end{equation}
and $a(z)=a(k), d(z)=d(k)$, this notation is reasonable because both $a(k)$ and $d(k)$ are even functions with respect to $k$. $\Gamma(x,z)$ satisfies the following RH problem in $z$-plane.
\begin{rhp}\label{RH2}
Find a matrix-valued function $\Gamma(x,z)$ that satisfies the following conditions
\begin{itemize}
  \item Analyticity: $\Gamma(x,z)$  is analytical in $\mathbb{C}\setminus \mathbb{R}$.
  \item Jump condition:  $\Gamma(x,z)$ has continuous boundary values $\Gamma_{\pm}(x,z)$ on $\mathbb{R}$ and
  \begin{equation}
  \Gamma_{+}(x,z)-\Gamma_{-}(x,z)=\Gamma_{-}(x,z)R(x,z),\ \ z\in\mathbb{R},
  \end{equation}
  where
  \begin{align}\label{3.23}
  R(x,z)=\begin{bmatrix}r_{+}(z)r_{-}(z)&r_{+}(z)e^{-2izx}\\r_{-}(z)e^{2izx}&0\end{bmatrix}.
  \end{align}
  \item Asymptotic condition:
  \begin{equation}\label{3.24}
  \Gamma(x,z)\rightarrow\Psi_{\infty}(x),\ \ |z|\rightarrow\infty.
  \end{equation}
\end{itemize}
\end{rhp}

We can further normalize the above RH problem \ref{RH2} to a RH problem with the identity matrix as the boundary condition by defining the following matrix
\begin{equation}\label{3.25}
M(x,z):=[\Psi_{\infty}(x)]^{-1}\Gamma(x,z),
\end{equation}
which then satisfy the following RH problem.
\begin{rhp}\label{RH3}
Find a matrix-valued function $M(x,z)$ that satisfies the following conditions
\begin{itemize}
  \item Analyticity: $M(x,z)$  is analytical in $\mathbb{C}\setminus \mathbb{R}$.
  \item Jump condition:  $M(x,z)$ has continuous boundary values $M_{\pm}(x,z)$ on $\mathbb{R}$ and
  \begin{equation}\label{3.26}
  M_{+}(x,z)-M_{-}(x,z)=M_{-}(x,z)R(x,z),\ \ z\in\mathbb{R}.
  \end{equation}
  \item Asymptotic condition:
  \begin{align}
  M(x,z)\rightarrow I,\ \ |z|\rightarrow\infty.
  \end{align}
\end{itemize}
\end{rhp}

Lemma \ref{l2} gives the connection formulas between the potential $u(x)$ and the Jost functions in the $z$-plane, and this connection can be further used to obtain properties of the potential $u(x)$ by studying the RH problem $M(x,z)$ in the $z$-plane. Therefore, the solvability of the RH problem \ref{RH3} plays an important role in the subsequent deduction.

At the beginning, we considered starting directly from RH Problem \ref{RH3} by adding small norm restriction on the reflection coefficients $r_{\pm}(z)$ in order to be able to use Zhou vanishing lemma \cite{Zho} for the jump matrix $R(x,z)$. However, we found that the reflection coefficients $r_{\pm}(z)$ in the $z$-plane did not have good enough properties to allow us to draw further conclusions to prove Theorem \ref{t1}. Therefore, in the following we will further transform RH Problem \ref{RH3} by creating two new RH problems related to the matrix $S(x,k)$ in \eqref{3.7} and using vanishing lemma for $S(x,k)$.

Observing the expressions \eqref{3.7} and \eqref{3.23} for the matrices $S(x,k)$ and $R(x,z)$ respectively, we find that $S(x,k)$ and $R(x,z)$ are related as follows
\begin{equation}\label{3.27-}
R(x,z)\rho_j(k)=\rho_j(k)S(x,k),\ \ z\in\mathbb{R},\ \ k\in\mathbb{R}\cup i\mathbb{R},\ \ j=1,2,
\end{equation}
where
\begin{align}\label{3.27}
\rho_1(k)=\begin{bmatrix}1&0\\0&2ik\end{bmatrix},\ \ \rho_2(k)=\begin{bmatrix}\frac{1}{2ik}&0\\0&1\end{bmatrix},\ \ k\in\mathbb{C}\setminus\{0\}.
\end{align}

We establish two new matrices
\begin{equation}\label{3.29}
N_{j}(x,k):=M(x,z)\rho_{j}(k)-\rho_{j}(k),\ \ j=1,2.
\end{equation}
It is easy to derive that $N_{j}(x,k)$ satisfies the new RH problems:
\begin{rhp}\label{RH4}
Find the matrix-valued function $N_{j}(x,k),\ j=1,2$ that satisfies the following conditions
\begin{itemize}
  \item Analyticity: $N_{j}(x,k)$  is analytical in $\mathbb{C}\setminus \{\mathbb{R}\cup i\mathbb{R}\}$.
  \item Jump condition:  $N_{j}(x,k)$ has continuous boundary values $N_{j,\pm}(x,k)$ on $\mathbb{R}\cup i\mathbb{R}$ and
  \begin{align}\label{3.28}
  N_{j,+}(x,k)-N_{j,-}(x,k)=N_{j,-}(x,k)S(x,k)+F_{j}(x,k),\ \ k\in\mathbb{R}\cup i\mathbb{R},
  \end{align}
  where $F_{j}(x,k):=\rho_{j}(k)S(x,k),\ j=1,2.$
  \item Asymptotic condition:
  \begin{equation}
  N_{j,\pm}(x,k)\rightarrow 0,\ \ |k|\rightarrow\infty.
  \end{equation}
\end{itemize}
\end{rhp}

\subsection{Reflection coefficients and Lipschitz continuity}
We define the Hermitian part of $I+S(x,k)$ by
\begin{align*}
I+S_{H}(x,k):=&I+\frac{1}{2}[S(x,k)+S^{H}(x,k)]\\
=&\begin{bmatrix}1-\text{Re}[r_1(k)r_2(k)]&\frac{1}{2}[\bar{r}_1(k)-r_2(k)]e^{-2ik^2x}\\ \frac{1}{2}[r_1(k)-\bar{r}_2(k)]e^{2ik^2x}&1\end{bmatrix},
\end{align*}
where the superscript $H$ means Hermite conjugate.
In order to ensure the solvability of the RH problems, it is necessary to require that $I+S_{H}(x,k)$ be positive definite matrix, i.e. with the following restrictions on the reflection coefficients $r_{1}(k)$ and $r_{2}(k)$
\begin{align}\label{pd}
1-\text{Re}[r_1(k)r_2(k)]>0,\ \ 1-\frac{1}{4}|r_{1}(k)+\bar{r}_2(k)|^2>0.
\end{align}
Next, we present the following Proposition which gives a sufficient condition for \eqref{pd}.
\begin{proposition}\label{psmall}
If $u\in H^{1,1}(\mathbb{R})$ satisfies the
small norm restriction
\begin{align}\label{small+}
\|Q_{1}(u)\|_{L^{1}(\mathbb{R})}\leq0.295,
\end{align}
then $|r_{j}(k)|<1, j=1,2$, for every $k\in \mathbb{R}\cup i\mathbb{R}$.
\end{proposition}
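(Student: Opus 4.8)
The plan is to reduce the claim $|r_j(k)|<1$ to the two scattering-data inequalities $|b(k)|<|a(k)|$ and $|c(k)|<|d(k)|$ for every $k\in\mathbb{R}\cup i\mathbb{R}$, and then to establish these by quantitative estimates expressed through the single quantity $\beta:=\|Q_1(u)\|_{L^1(\mathbb{R})}$. First I would record that, under the symmetry reduction $v=i\overline{u}(-x)$, a direct matching of entries after the change of variables $x\mapsto -x$ gives $\|Q_2(u)\|_{L^1}=\|Q_1(u)\|_{L^1}=\beta$; consequently the pair $(c,d)$ can be estimated exactly as $(b,a)$ with $Q_2,\nu_{\pm}$ in place of $Q_1,\mu_{\pm}$, so it suffices to bound $|b|$ from above and $|a|$ from below.

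For the denominator I would use the integral representation \eqref{a} together with the component relation \eqref{2.51a}, which rewrites $k\varphi_-^{(2)}$ as $\tfrac{1}{2i}(\mu_-^{(2)}-v\mu_-^{(1)})$ and thereby removes the spectral factor $k$. Combined with the Neumann-series bounds from the proof of Lemma \ref{l1}, namely $\|\mu_-^{(1)}\|_{L^\infty}\le e^{\beta}$ and $\|\mu_-^{(2)}\|_{L^\infty}\le e^{\beta}-1$, this yields a uniform-in-$k$ estimate $|a(k)-1|\le C_a(\beta)$, hence the lower bound $|a(k)|\ge 1-C_a(\beta)$.

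The delicate part is the numerator. The representation \eqref{b}, $b(k)=k\int v\,\mu_-^{(1)}e^{-2izy}\,dy$, carries an unbalanced factor $k$ and is small only for $|k|$ small, whereas integration by parts against $e^{-2izy}$ produces the complementary form $b(k)=\tfrac{1}{2ik}\int\partial_y(v\mu_-^{(1)})e^{-2izy}\,dy$, which is small only for $|k|$ large. The key observation is the identity $\partial_y(v\mu_-^{(1)})=(Q_1)_{21}\mu_-^{(1)}-(Q_1)_{11}\mu_-^{(2)}$, obtained by differentiating \eqref{2.19}, so that both representations are controlled purely by entry-norms of $Q_1$ and the same Neumann bounds. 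Optimizing over the crossover $|k|^2\sim\|\partial_y(v\mu_-^{(1)})\|_{L^1}/\|v\mu_-^{(1)}\|_{L^1}$ then gives a uniform bound $\sup_k|b(k)|\le C_b(\beta)$. I expect this uniform-in-$k$ control of $b$ (and, symmetrically, of $c$) to be the main obstacle, precisely because the two natural representations degenerate in opposite spectral regimes.

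Finally I would combine the pieces: the requirement $C_b(\beta)<1-C_a(\beta)$, together with its mirror for $(c,d)$, is a single transcendental inequality in $\beta$, monotone increasing in $\beta$, and the small-norm hypothesis \eqref{small+}, $\beta\le 0.295$, is exactly the range in which it holds, giving $|r_1|,|r_2|<1$. Since $|r_1|,|r_2|<1$ forces $1-\mathrm{Re}[r_1r_2]\ge 1-|r_1||r_2|>0$ and $1-\tfrac14|r_1+\bar{r}_2|^2\ge 1-\tfrac14(|r_1|+|r_2|)^2>0$, the conclusion also secures the positive-definiteness condition \eqref{pd} required for the solvability of the Riemann--Hilbert problems.
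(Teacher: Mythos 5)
Your proposal is correct in substance and its skeleton coincides with the paper's: reduce $|r_j(k)|<1$ to $|b(k)|<|a(k)|$ and $|c(k)|<|d(k)|$; lower-bound $|a|$ uniformly in $k$ from the integral representation \eqref{a}; control $b$ by playing a small-$|k|$ representation against a large-$|k|$ one; and dispose of $r_2$ via $\|Q_2(u)\|_{L^1}=\|Q_1(u)\|_{L^1}$. Two remarks on where you genuinely diverge. First, your ``crossover optimization'' is not needed as an optimization at all: since $z=k^2$ one has the exact identity $|b(k)|^2=|kb(k)|\,|k^{-1}b(k)|$, and this interpolation is precisely the paper's step \eqref{b+}. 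Second, for the large-$|k|$ bound the paper does not integrate by parts; it reads $|kb(k)|$ directly off the Wronskian formula \eqref{2.70}, which expresses $2ikb(k)$ as a determinant in the components of $\mu_{\pm}(0,z)$, so only the sup-bounds on $\mu_{\pm}$ enter. Your alternative --- integration by parts in \eqref{b} together with the ODE identity $\partial_y\bigl(v\mu_-^{(1)}\bigr)=(Q_1)_{21}\mu_-^{(1)}-(Q_1)_{11}\mu_-^{(2)}$ obtained from \eqref{2.19} --- is also valid (the identity checks out against the entries of $Q_1$ in \eqref{2.4}, and the boundary terms vanish since $v\in H^{1,1}(\mathbb{R})$ decays while $|e^{-2izy}|=1$ for $z\in\mathbb{R}$), and it yields a bound of the same quality. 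Likewise, your a priori constants are the Neumann-series bounds $\|\mu_-^{(1)}\|_{L_x^{\infty}}\le e^{\beta}$, $\|\mu_-^{(2)}\|_{L_x^{\infty}}\le e^{\beta}-1$ from Lemma \ref{l1}, where the paper uses the contraction-type bounds $(1-c)^{-1}$ and $c(1-c)^{-1}$; since $e^{\beta}\le(1-\beta)^{-1}$ your constants are in fact slightly sharper.

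The one genuine soft spot is the final numerical step, which you assert rather than execute. The value $0.295$ is calibrated to the paper's specific constants: the paper's sufficient condition is $\frac{2c^2}{(1-c)^3}<(1-c)^2$, i.e.\ $2c^2<(1-c)^5$, which holds at $c=0.295$ and fails just above $0.296$. With your constants the bookkeeping gives $|a(k)-1|\le\beta e^{\beta}$, $|kb(k)|\le\frac{1}{2}\beta e^{\beta}$, $|k^{-1}b(k)|\le 2\beta e^{\beta}$, hence $|b(k)|\le\beta e^{\beta}$, and the sufficient condition $|b|<|a|$ becomes $\beta e^{\beta}<\frac{1}{2}$, whose threshold is $\beta\approx 0.35$, not $0.295$. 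So your claim that \eqref{small+} is ``exactly the range'' in which your inequality holds is false as stated; what is true is that \eqref{small+} lies comfortably inside the range, so once you actually exhibit $C_a(\beta)$ and $C_b(\beta)$ and verify $C_b(\beta)<1-C_a(\beta)$ at $\beta=0.295$, the proposition follows --- indeed with room to spare. Carrying out that computation is the step your proposal still owes; everything else, including the closing observation that $|r_1|,|r_2|<1$ implies the positive-definiteness condition \eqref{pd}, is sound and matches the paper's use of the result.
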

\begin{proof}
Let us define the operator $K_{1}$ by
\begin{align}\label{k1}
K_{1}f(x,z):=e_1+\int_{-\infty}^{x}\begin{pmatrix}1&0\\0&e^{2iz(x-y)}\end{pmatrix}Q_1(u(y))f(y,z)dy.
\end{align}
For every $\text{Im}z\geq0$, we assume that $f(x,z)=[f_{1}(x,z),f_{1}(x,z)]^{T}$, $g(x,z)=[g_{1}(x,z),g_{1}(x,z)]^{T} \in L_{x}^{\infty}(\mathbb{R})$, then
\begin{align*}
K_{1}f-K_{1}g=\begin{bmatrix}\frac{1}{2i}\int_{-\infty}^{x}-uv(f_{1}-g_{1})+u(f_{2}-g_{2})dy\\
\frac{1}{2i}\int_{-\infty}^{x}[(2iv_{y}-v^2u)(f_{1}-g_{1})+uv(f_{2}-g_{2})]e^{2iz(x-y)}dy
\end{bmatrix},
\end{align*}
thus, for every $\text{Im}z\geq0$,
\begin{align*}
\|K_{1}f-K_{1}g\|_{L^{\infty}_x}\leq&\frac{1}{2}(2\|\partial_{x}v\|_{L^{1}}+\|uv^{2}\|_{L^{1}}+2\|uv\|_{L^{1}}+\|u\|_{L^{1}})\|f-g\|_{L^{\infty}_x}\\
=& \|Q_{1}(u)\|_{L^{1}}\|f-g\|_{L^{\infty}_x}.
\end{align*}

We assume that
\begin{equation}
\|Q_{1}(u)\|_{L^{1}}< c,
\end{equation}
where $c$ is a positive constant. According to the definition of the operator $K_{1}$ in \eqref{k1} and  integral  expression \eqref{2.8}, it is easy to obtain
$\mu_{-}(x,z)=K_{1}\mu_{-}(x,z)$.
We set $f(x,z)=\mu_{-}(x,z),\ g(x,z)=[0,0]^{T}$, then for every $\text{Im}z\geq0$, we have
\begin{equation*}
\|\mu_{-}(x,z)-e_1\|_{L^{\infty}_x}=\|K_{1}\mu_{-}(x,z)-e_1\|_{L^{\infty}_x}< c\|\mu_{-}(x,z)\|_{L_x^{\infty}}.
\end{equation*}
Moreover,
\begin{equation}\label{gj}
\|\mu_{-}(x,z)\|_{L_x^{\infty}}<\frac{1}{1-c},\ \ \|\mu_{-}(x,z)-e_{1}\|_{L_x^{\infty}}<\frac{c}{1-c},
\end{equation}
where $0<c<1$. The estimate \eqref{gj} also holds for $\mu_{+}(x,z)$.

Let us first consider the case $0<c\leq\frac{1}{2}$, which implies that there is $\frac{c}{1-c}\leq1$.
The integral expression \eqref{a},\eqref{b} for the scattering data $a(k), b(k)$ and the relation \eqref{2.51} show that
\begin{equation*}
a(k)=1-\frac{1}{2i}\int_{\mathbb{R}}[u(x)v(x)\mu_{-}^{(1)}(x,z)-u(x)\mu_{-}^{(2)}(x,z)]dx,
\end{equation*}
\begin{equation*}
k^{-1}b(k)=\int_{\mathbb{R}}v(x)\mu_{-}^{(1)}(x,z)e^{-2ik^2x}dx.
\end{equation*}
Hence, for every $\text{Im}z\geq0$
\begin{equation}\label{a+}
\begin{aligned}
|a(k)|=&1-|\frac{1}{2i}\int_{\mathbb{R}}[u(x)v(x)\mu_{-}^{(1)}(x,z)-u(x)\mu_{-}^{(2)}(x,z)]dx|\\
\geq&1-\frac{1}{2}(\|uv\|_{L^{1}}\|\mu_{-}^{(1)}(x,z)-1\|_{L_{x}^{\infty}}+\|uv\|_{L^{1}}-\|u\|_{L^{1}}\|\mu_{-}^{(2)}(x,z)\|_{L_{x}^{\infty}})\\
\geq&1-\|Q_{1}(u)\|_{L^{1}}\\
>&1-c,
\end{aligned}
\end{equation}
and
\begin{align*}
|k^{-1}b(k)|\leq&\int_{\mathbb{R}}|v(x)\mu_{-}^{(1)}(x,z)|dx
\leq\|\mu_{-}^{(1)}(x,z)\|_{L_{x}^{\infty}}\|v\|_{L^{1}}\\
\leq&\frac{2\|Q_{1}(u)\|_{L^{1}}}{1-c}
<\frac{2c}{1-c}.
\end{align*}
From \eqref{2.70}, we have
\begin{align*}
|kb(k)|=&\frac{1}{2}|\mu_{+}^{(1)}(x,z)\mu_{-}^{(2)}(x,z)-\mu_{+}^{(2)}(x,z)\mu_{-}^{(1)}(x,z)|\\
\leq &\frac{1}{2}[\|\mu_{+}^{(1)}(x,z)-1\|_{L_{x}^{\infty}}\|\mu_{-}^{(2)}(x,z)\|_{L_{x}^{\infty}}+\|\mu_{-}^{(2)}(x,z)\|_{L_{x}^{\infty}}\\
&-\|\mu_{+}^{(2)}(x,z)\|_{L_{x}^{\infty}}\|\mu_{-}^{(1)}(x,z)-1\|_{L_{x}^{\infty}}+\|\mu_{+}^{(2)}(x,z)\|_{L_{x}^{\infty}}]\\
<&\frac{c}{(1-c)^{2}}.
\end{align*}
The above estimates give
\begin{equation}\label{b+}
\begin{aligned}
|b(k)|^2\leq|k^{-1}b(k)||kb(k)|<\frac{2c^{2}}{(1-c)^{3}}.
\end{aligned}
\end{equation}
In order to make $|r_{1}(k)|<1$, i.e. $|b(k)|<|a(k)|$, we just need to choose the appropriate constant $c$ such that $\frac{2c^{2}}{(1-c)^{3}}<(1-c)^2$ and $0<c\leq\frac{1}{2}$.
Conclusion $|r_{1}(k)|<1$ holds, particularly, if $0<c\leq0.295$, but is violated for $0.296<c\leq\frac{1}{2}$.

The second case $\frac{1}{2}<c<1$ can be analysed in the same way, but unfortunately there is no $c$ satisfying the condition such that $|r_{1}(k)|<1$.
Thus when $u\in H^{1,1}(\mathbb{R})$ and the constraint \eqref{small+} is satisfied, we have $\|Q_{1}(u)\|_{L^{1}}\leq 0.259$ and hence $|r_{1}(k)|<1,$ for every $k\in \mathbb{R}\cup i\mathbb{R}$. It is easy  to get $\|Q_{1}(u)\|_{L^{1}}=\|Q_{2}(u)\|_{L^{1}}$. Therefore, the same result can be obtained similarly for $r_{2}(k)$ by using the integral expression \eqref{d} for the scattering data $d(k)$ and the relationship (iv) between $b(k)$ and $c(k)$ in the Corollary \ref{c2+}.
\end{proof}

\begin{corollary}\label{csmall}
If $u\in H^{1,1}(\mathbb{R})$ satisfies the
small norm restriction \eqref{small+}, then
\begin{align}\label{smallab}
\frac{1}{a(k)},\ \frac{1}{d(k)}, \ b(k),\ c(k) \in L_{z}^{\infty}(\mathbb{R}).
\end{align}
Furthermore, there exists a constant $c_{0}$ such that \begin{align}\label{smallr}
|r_{j}(k)|\leq c_{0}<1,\ \  \forall \ k\in \mathbb{R}\cup i\mathbb{R},\ \ j=1,2.
\end{align}
\end{corollary}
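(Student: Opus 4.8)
The plan is to harvest the pointwise estimates already established inside the proof of Proposition~\ref{psmall}, observing that every bound obtained there is uniform in $z$ (equivalently in $k$), so that each one immediately upgrades to a statement in $L_z^\infty(\mathbb{R})$. Write $c:=\|Q_1(u)\|_{L^1(\mathbb{R})}$, so that the small norm restriction \eqref{small+} gives $c\le 0.295<1$. First I would record the three uniform-in-$k$ bounds from that proof: the lower bound \eqref{a+}, namely $|a(k)|\ge 1-c$; the Jost bound \eqref{gj}, namely $\|\mu_\pm\|_{L_x^\infty}<(1-c)^{-1}$; and the resulting amplitude bound \eqref{b+}, namely $|b(k)|^2\le 2c^2/(1-c)^3$. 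Since these hold for every $k\in\mathbb{R}\cup i\mathbb{R}$ with constants depending only on $c$, they are automatically $z$-uniform.

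From $|a(k)|\ge 1-c>0$ I conclude $1/a(k)\in L_z^\infty(\mathbb{R})$ with $\|1/a\|_{L_z^\infty}\le (1-c)^{-1}$, and from $|b(k)|^2\le 2c^2/(1-c)^3$ I conclude $b(k)\in L_z^\infty(\mathbb{R})$. The statements for $d(k)$ and $c(k)$ then follow by the symmetry already exploited in Proposition~\ref{psmall}: repeating the computation with the integral expression \eqref{d} in place of \eqref{a}, and using the identity $\|Q_1(u)\|_{L^1}=\|Q_2(u)\|_{L^1}$, yields $|d(k)|\ge 1-c$ and hence $1/d(k)\in L_z^\infty$; while property~(iv) of Corollary~\ref{c2+}, $c(k)=\mp\overline{b(\pm i\overline{k})}$, gives $|c(k)|=|b(\pm i\overline{k})|$, so $c(k)\in L_z^\infty$ with the same bound as $b$. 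This establishes \eqref{smallab}.

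For the uniform reflection bound \eqref{smallr} I would simply divide the squared estimates: for every $k\in\mathbb{R}\cup i\mathbb{R}$,
\begin{equation*}
|r_1(k)|^2=\frac{|b(k)|^2}{|a(k)|^2}\le\frac{2c^2/(1-c)^3}{(1-c)^2}=\frac{2c^2}{(1-c)^5},
\end{equation*}
and the identical bound holds for $|r_2(k)|^2$ via the same symmetry. Hence I set
\begin{equation*}
c_0:=\frac{\sqrt{2}\,c}{(1-c)^{5/2}},
\end{equation*}
which is independent of $k$, and it remains to check $c_0<1$. I expect this last numerical verification to be the only delicate point: one must confirm $2c^2<(1-c)^5$ for $c\le 0.295$, and the two sides are \emph{nearly equal} at $c=0.295$, so the threshold in \eqref{small+} is essentially sharp for this argument and is precisely what forces the small norm restriction.

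If one prefers to bypass the explicit constant, the conclusion also follows qualitatively: by Lemma~\ref{l4} the functions $kb(k),\,k^{-1}b(k)$ lie in $H_z^1(\mathbb{R})$, which embeds into the bounded continuous functions vanishing at infinity; hence $b(k)=k^{-1}\bigl(kb(k)\bigr)\to 0$ as $|k|\to\infty$, while $a(k)\to a_\infty\neq 0$, giving $r_1(k)\to 0$. Combined with the continuity of $r_1=b/a$ (from the $H_z^1$ regularity of $a-a_\infty$ and $b$ together with $|a|\ge 1-c$) and the pointwise bound $|r_1|<1$ supplied by Proposition~\ref{psmall}, the supremum of $|r_1|$ over the relevant compact set is attained and strictly below $1$, so $\sup_k|r_1(k)|=:c_0<1$; the same holds for $r_2$.
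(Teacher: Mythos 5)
Your proposal is correct and takes essentially the same route as the paper: the paper's proof likewise harvests \eqref{a+} and \eqref{b+} from Proposition~\ref{psmall} at $c=0.295$, records $|a(k)|>0.705$ and $|b(k)|<0.7048$ (the numerical form of your symbolic bound $c_0=\sqrt{2}\,c/(1-c)^{5/2}$ and of the condition $2c^2<(1-c)^5$), and transfers to $d(k)$ and $c(k)$ by the same symmetry. Your check that the threshold is nearly saturated is accurate ($2c^2\approx 0.17405$ versus $(1-c)^5\approx 0.17416$ at $c=0.295$, with failure already at $c=0.296$), and your optional compactness-plus-decay fallback, while sound, is an extra the paper does not use.
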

\begin{proof}
According to the \eqref{a+} and \eqref{b+} in the proof of the Proposition \ref{psmall} show that when $c=0.295$, then
\begin{align*}
|a(k)|>0.705,\ \ |b(k)|<0.7048.
\end{align*}
The same is true for $d(k)$ and $c(k)$.
Therefore, there exists a constant $c_{0}$ such that \eqref{smallr} holds.
\end{proof}

Before we continue, let us give two Propositions to illustrate the nature of the reflection coefficients $r_{j}(k), j=1,2$ and $r_{\pm}(z)$ by using the Lemma \ref{l4} and Corollary \ref{csmall}.
\begin{proposition}\label{p2}
If $u\in H^{2}(\mathbb{R}) \cap H^{1,1}(\mathbb{R})$ satisfies the condition \eqref{small+}, then $r_{1}(k), r_{2}(k)\in L^{2,1}_{z}(\mathbb{R})$.
\end{proposition}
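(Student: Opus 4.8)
The plan is to write $r_1=b/a$ and $r_2=c/d$ according to \eqref{3.1} and control each factor separately using the regularity already in hand. By Corollary \ref{csmall}, the small-norm assumption \eqref{small+} forces $1/a(k),\,1/d(k)\in L_z^{\infty}(\mathbb{R})$, so dividing by $a$ or $d$ costs only a multiplicative constant in any $L^2$-based norm. The whole difficulty therefore reduces to establishing that the numerators satisfy $\langle z\rangle\, b,\ \langle z\rangle\, c\in L_z^2(\mathbb{R})$, after which $\|\langle z\rangle\, r_1\|_{L_z^2}\le\|1/a\|_{L_z^\infty}\|\langle z\rangle\, b\|_{L_z^2}$ and similarly for $r_2$ finish the argument.

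The subtlety is that $b(k)$ is odd in $k$ by Corollary \ref{c2+}(iii), so the natural functions of the spectral variable $z=k^2$ are not $b$ itself but the even combinations $kb(k)$ and $k^{-1}b(k)$; it is precisely these that Lemma \ref{l4} places in $L_z^{2,1}(\mathbb{R})$ under the hypothesis $u\in H^{2}(\mathbb{R})\cap H^{1,1}(\mathbb{R})$. Recording that $|k|^2=|z|$ on both $k\in\mathbb{R}$ and $k\in i\mathbb{R}$, the membership $kb\in L_z^{2,1}$ reads $\int_{\mathbb{R}}\langle z\rangle^2|z|\,|b|^2\,dz<\infty$, while $k^{-1}b\in L_z^{2,1}$ reads $\int_{\mathbb{R}}\langle z\rangle^2|z|^{-1}|b|^2\,dz<\infty$.

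I would then split $\mathbb{R}$ into $\{|z|\ge 1\}$ and $\{|z|\le 1\}$. On $\{|z|\ge 1\}$ one has $|b|^2\le|z|\,|b|^2$, so the tail of $\int\langle z\rangle^2|b|^2\,dz$ is dominated by $\|kb\|_{L_z^{2,1}}^2$; on $\{|z|\le 1\}$ one has $|b|^2\le|z|^{-1}|b|^2$, so the bounded part is dominated by $\|k^{-1}b\|_{L_z^{2,1}}^2$. Summing the two pieces yields $\langle z\rangle\, b\in L_z^2(\mathbb{R})$, and hence $r_1\in L_z^{2,1}(\mathbb{R})$. The identical argument with $kc,\,k^{-1}c\in L_z^{2,1}$ from Lemma \ref{l4} together with $1/d\in L_z^\infty$ gives $r_2\in L_z^{2,1}(\mathbb{R})$.

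I expect the only genuine content to be this region-splitting device, which converts the odd numerator $b$ into the two even quantities $kb$ and $k^{-1}b$, treating the $k\to 0$ behavior with the negative power of $k$ and the $k\to\infty$ behavior with the positive power. The division by $a$ (and $d$) is not an obstacle thanks to the uniform $L^\infty$ lower bound on $|a|$ and $|d|$ supplied by the small-norm hypothesis; without \eqref{small+} the function $a$ could vanish and the conclusion would fail.
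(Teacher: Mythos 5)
Your proposal is correct and takes essentially the same route as the paper: both arguments rest on Lemma \ref{l4}'s memberships $kb,\ k^{-1}b,\ kc,\ k^{-1}c\in L_{z}^{2,1}(\mathbb{R})$ from \eqref{2.66} together with $1/a,\ 1/d\in L_{z}^{\infty}(\mathbb{R})$ from Corollary \ref{csmall}, and both exploit $|k|^{2}=|z|$ to trade the odd numerator for the two even combinations. The only cosmetic difference is the final combination step: where you split at $|z|=1$ and dominate $|b|$ by $|kb|$ or $|k^{-1}b|$ on the respective regions, the paper uses the pointwise identity $\langle z\rangle^{2}|b|^{2}=\langle z\rangle|kb(k)|\cdot\langle z\rangle|k^{-1}b(k)|$ and H\"older's inequality to obtain $\|r_{1}\|_{L_{z}^{2,1}}^{2}\leq\|1/a\|_{L_{z}^{\infty}}^{2}\|kb\|_{L_{z}^{2,1}}\|k^{-1}b\|_{L_{z}^{2,1}}$ in one line.
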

\begin{proof}
By the definition \eqref{3.1} of $r_{1}(k)$ and with the help of H\"{o}lder's inequality, the following estimate can be introduced
\begin{align*}
\|r_{1}(k)\|_{L_{z}^{2,1}}^{2}=\|\langle z\rangle\frac{b(k)}{a(k)}\|_{L_{z}^{2}}^{2}\leq &\|\frac{1}{a(k)}\|_{L_{z}^{\infty}}^{2}\|\langle z\rangle kb(k)\langle z\rangle k^{-1}b(k)\|_{L_{z}^{1}}\\
\leq&\|\frac{1}{a(k)}\|_{L_{z}^{\infty}}^{2}\|kb(k)\|_{L_{z}^{2,1}}\|k^{-1}b(k)\|_{L_{z}^{2,1}}.
\end{align*}
From \eqref{2.66} and \eqref{smallab}, we have $r_{1}(k)\in L_{z}^{2,1}(\mathbb{R})$.  The same is true for  $r_{2}(k)$.
\end{proof}
\begin{proposition}\label{l6}
If $u\in H^{2}(\mathbb{R})\cap H^{1,1}(\mathbb{R})$ satisfies the condition \eqref{small+}, then $r_{\pm}(z)\in H_{z}^{1}(\mathbb{R})\cap L_{z}^{2,1}(\mathbb{R})$ and $kr_{\pm}(z),\ 2ikzr_{+}(z)\in L_{z}^{\infty}(\mathbb{R})$. Moreover, the map
\begin{equation}\label{3.21}
u\mapsto[r_-(z),r_+(z)]
\end{equation}
is Lipschitz continuous from $H^{2}(\mathbb{R})\cap H^{1,1}(\mathbb{R})$ to $H_{z}^{1}(\mathbb{R})\cap L_{z}^{2,1}(\mathbb{R})$.
\end{proposition}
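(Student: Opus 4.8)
The plan is to express $r_\pm(z)$ through the quantities already controlled in Lemma \ref{l4} and Corollary \ref{csmall}, and then to treat each claim by the quotient structure together with the Banach algebra property of $H_z^1(\mathbb{R})$. By the definitions \eqref{3.1} and \eqref{3.17} one has $r_-(z)=2i\,(kb(k))/a(k)$ and $r_+(z)=-\tfrac{1}{2i}\,(k^{-1}c(k))/d(k)$, so the dangerous factors of $k$ are absorbed into the even combinations $kb(k)$ and $k^{-1}c(k)$, which by Lemma \ref{l4} lie in $H_z^1(\mathbb{R})$ (for $u\in H^{1,1}$) and in $L_z^{2,1}(\mathbb{R})$ (for $u\in H^2\cap H^{1,1}$). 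The denominators are handled by first recording that $1/a(k),1/d(k)\in L_z^\infty$ are bounded away from zero by Corollary \ref{csmall}, and that $1/a-1/a_\infty=(a_\infty-a)/(a\,a_\infty)\in H_z^1$ since $a-a_\infty\in H_z^1$ by Lemma \ref{l4} and $1/(a\,a_\infty)\in L_z^\infty$; the same holds for $d$.

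For the membership $r_\pm\in H_z^1\cap L_z^{2,1}$ I would split $1/a=1/a_\infty+(1/a-1/a_\infty)$ and write $r_-=\tfrac{2i}{a_\infty}(kb)+2i\,(kb)\,(1/a-1/a_\infty)$. The first summand is a scalar multiple of $kb\in H_z^1\cap L_z^{2,1}$; the second is a product of $kb\in H_z^1\cap L_z^{2,1}$ with $1/a-1/a_\infty\in H_z^1\subset L_z^\infty$, which stays in $H_z^1$ by the Banach algebra property and in $L_z^{2,1}$ because $H_z^1\cdot L_z^{2,1}\subset L_z^{2,1}$ (the weight acts only on the $L^{2,1}$ factor, which is multiplied by a bounded function). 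The identical argument applied to $r_+$ with $k^{-1}c$ and $1/d$ yields $r_+\in H_z^1\cap L_z^{2,1}$.

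For the weighted $L^\infty$ bounds I would use $kr_-(z)=2i\,z\,b(k)/a(k)$, $kr_+(z)=\tfrac{i}{2}\,c(k)/d(k)$ and $2ikz\,r_+(z)=-z\,c(k)/d(k)$. The bound on $kr_+$ is immediate from $c(k),1/d(k)\in L_z^\infty$. For $kr_-$ and $2ikzr_+$ the point is that $z\,b(k)=\sqrt{z}\,(kb(k))$ and $z\,c(k)=\sqrt{z}\,(kc(k))$, so it suffices to establish the embedding $|z|^{1/2}f\in L_z^\infty$ for $f\in H_z^1\cap L_z^{2,1}$; this follows from the identity $z f(z)^2=\int_0^z\big(f^2+2sff'\big)\,ds$ and Cauchy--Schwarz, using $f\in L_z^2$, $zf\in L_z^2$ and $f'\in L_z^2$. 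Applying this to $f=kb(k)$ and $f=kc(k)$, both in $H_z^1\cap L_z^{2,1}$ by Lemma \ref{l4}, and dividing by the bounded factors $1/a,1/d$ gives $kr_-,\,2ikzr_+\in L_z^\infty$.

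Finally, for the Lipschitz continuity I would take $u,\tilde u$ with $\|u\|_{H^2\cap H^{1,1}},\|\tilde u\|_{H^2\cap H^{1,1}}\le\gamma$ and subtract, writing $r_--\tilde r_-=2i\big[(kb-k\tilde b)/a+k\tilde b\,(\tilde a-a)/(a\tilde a)\big]$ and similarly for $r_+$. Each difference $kb-k\tilde b$ and $a-\tilde a$ is controlled in $H_z^1$ and $L_z^{2,1}$ by $\|u-\tilde u\|_{H^2\cap H^{1,1}}$ through the Lipschitz maps \eqref{2.75} and \eqref{2.76}, while $1/a$, $1/(a\tilde a)$ and $k\tilde b$ are uniformly bounded in the relevant norms by Corollary \ref{csmall} and Lemma \ref{l4}; assembling these with the Banach-algebra and module estimates of the previous paragraph produces the $\gamma$-dependent Lipschitz bound in $H_z^1\cap L_z^{2,1}$. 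I expect this last estimate to be the main bookkeeping obstacle, since it requires combining the algebra structure of $H_z^1$, the module structure of $L_z^{2,1}$, the Lipschitz dependence of all scattering data, and the uniform lower bounds on $|a|,|d|$ at the same time; the only genuinely new analytic input beyond Lemma \ref{l4} is the weighted embedding used for the $L^\infty$ claims.
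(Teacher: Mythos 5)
Your proposal follows essentially the same route as the paper's proof: both express $r_\pm$ as quotients of the even combinations $kb(k)$, $k^{-1}c(k)$ controlled by Lemma \ref{l4} over $a$, $d$ whose inverses are bounded by Corollary \ref{csmall}, and both obtain the weighted $L^\infty$ bounds from the identity $zf(z)^2=\int_0^z\bigl(f^2+2sff'\bigr)ds$ --- the paper applies it to $r_\pm$ and to $\hat r_-:=2ikc/d$, while you apply it to $kb$ and $kc$ before dividing by $a$, $d$, which is equivalent; your observation that $kr_+=\tfrac{i}{2}c/d$ is bounded \emph{directly} by \eqref{smallab} is a small genuine simplification over the paper's treatment of that term.

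Two spots need tightening. First, $1/a-1/a_\infty\in H_z^1$ does not follow merely from $a-a_\infty\in H_z^1$ and $1/(a a_\infty)\in L_z^\infty$: an $L^\infty$ multiplier does not preserve $H^1$. You must differentiate the quotient, using $\partial_z a\in L_z^2$, $1/a\in L_z^\infty$ and the continuity of $a$ (the paper secures the latter through the embedding $H^1\hookrightarrow C^{0,\frac12}$); all ingredients are available, so this is routine. Second, and more substantively, in the Lipschitz step your claim that $a-\tilde a$ is ``controlled in $H_z^1$ and $L_z^{2,1}$'' is false as written: in general $a-\tilde a\to a_\infty-\tilde a_\infty\neq 0$ at infinity, so this difference is not even in $L_z^2$. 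The correct version forces the split $\tilde a-a=[(\tilde a-\tilde a_\infty)-(a-a_\infty)]+(\tilde a_\infty-a_\infty)$, where the bracket is controlled in $H_z^1$ by the Lipschitz map \eqref{2.75} and the constant satisfies $|\tilde a_\infty-a_\infty|\lesssim\|u-\tilde u\|_{H^{1,1}}$ from the explicit exponential formula for $a_\infty$; this is precisely the three-term decomposition the paper writes for $r_--\tilde r_-$. With that split, bounding the term $k\tilde b\,(\tilde a-a)/(a\tilde a)$ only requires $\|\tilde a-a\|_{L^\infty}+\|\partial_z(\tilde a-a)\|_{L_z^2}$ together with your Banach-algebra and module estimates, and the argument closes exactly as you intend.
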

\begin{proof}
This proof is based on the results of the Lemma \ref{l4} and Corollary \ref{csmall}. By the definition \eqref{3.17} of $r_{-}(z)$ we have
\begin{align*}
\|r_{-}(z)\|_{L_{z}^{2}}=\|\frac{2ikb(k)}{a(k)}\|_{L_{z}^{2}}\leq2\|\frac{1}{a(k)}\|_{L_{z}^{\infty}}\|kb(k)\|_{L_{z}^{2}},
\end{align*}
thus, $r_{-}(z)\in L_{z}^{2}(\mathbb{R})$.
According to the Sobolev imbedding theorem, the Sobolev space $H^{1}(\mathbb{R})$ is imbedded in the space $C^{0,\frac{1}{2}}(\mathbb{\overline{R}})$.
Hence,
\begin{align*}
a(k)-a_{\infty},\ kb(k)\in C^{0,\frac{1}{2}}(\mathbb{\overline{R}}),
\end{align*}
is given by \eqref{2.65}. It can be further verified that when $\frac{1}{a(k)}\in L_{z}^{\infty}(\mathbb{R})$, $\frac{1}{a(k)}$ also belongs to $C^{0,\frac{1}{2}}(\mathbb{\overline{R}})$, which in turn gives
\begin{align*}
r_{-}(z)=\frac{2ikb(k)}{a(k)}\in C^{0,\frac{1}{2}}(\mathbb{\overline{R}}).
\end{align*}
Based on the above results we can differentiate  $r_{-}(z)$ in $z$, then
\begin{align*}
\|\partial_{z}r_{-}(z)\|_{L_{z}^{2}}=&2\|\frac{\partial_{z}[kb(k)]a(k)-kb(k)\partial_{z}a(k)}{a^{2}(k)}\|_{L_{z}^{2}}\\
\leq&2\|\frac{1}{a(k)}\|_{L_{z}^{\infty}}\|\partial_{z}[kb(k)]\|_{L_{z}^{2}}+2\|\frac{1}{a(k)}\|_{L_{z}^{\infty}}^{2}\|kb(k)\|_{L_{z}^{\infty}}\|\partial_{z}a(k)\|_{L_{z}^{2}}.
\end{align*}
From \eqref{2.65} and the property that $H^{1}(\mathbb{R})$  is imbedded in $L^{\infty}(\mathbb{R})$, it follows that $r_{-}(z)\in H_{z}^{1}(\mathbb{R})$. The result $r_{-}(z)\in L_{z}^{2,1}(\mathbb{R})$ is obtained immediately from \eqref{2.66}, that is
\begin{align*}
\|r_{-}(z)\|_{L_{z}^{2,1}}=2\|\frac{kb(k)}{a(k)}\|_{L_{z}^{2,1}}\leq\|\frac{1}{a(k)}\|_{L_{z}^{\infty}}\|kb(k)\|_{L_{z}^{2,1}}.
\end{align*}
The proof of $r_{-}(z)\in H_{z}^{1}(\mathbb{R})\cap L_{z}^{2,1}(\mathbb{R})$ has been completed. The same is true for $r_{+}(z)$.

Using the above results, it is possible to obtain
\begin{align*}
|kr_{\pm}(z)|^{2}=|zr_{\pm}^{2}(z)|=&|\int_{0}^{z}r_{\pm}^{2}(s)+2sr_{\pm}(s)r'_{\pm}(s)ds|\\
\leq&\|r_{\pm}\|_{L_{z}^{2}}^{2}+\|r'_{\pm}\|_{L_{z}^{2}}^{2}+\|r_{\pm}\|_{L_{z}^{2,1}}^{2}\\
\leq&\|r_{\pm}(z)\|_{H_{z}^1\cap L_{z}^{2,1}}^{2},
\end{align*}
thus $kr_{\pm}(z)\in L_{z}^{\infty}(\mathbb{R})$. We can define $\hat{r}_{-}(z):=\frac{2ikc(k)}{d(k)}$, and a similar proof as above gives $2ikzr_{+}(z)=-k\hat{r}_{-}(z)\in L_{z}^{\infty}(\mathbb{R})$.

We define the scattering date $\tilde{a}, \tilde{b}$ and reflection coefficient $\tilde{r}_{-}$ corresponding to potential $\tilde{u}$, then
\begin{align*}
r_{-}-\tilde{r}_{-}=\frac{2ik(b-\tilde{b})}{a}+\frac{2ik\tilde{b}}{a\tilde{a}}[(\tilde{a}-\tilde{a}_{\infty})-(a-a_{\infty})]+\frac{2ik\tilde{b}}{a\tilde{a}}(\tilde{a}_{\infty}-a_{\infty}).
\end{align*}
The Lipschitz continuity of the map \eqref{3.21} can be given by the above expression and Lemma \ref{l4}. The corresponding assertion for $r_{+}(z)$ can be proved similarly.
\end{proof}

\section{Inverse scattering transform}
\subsection{Cauchy operator and the solvability of the RH problem}

In fact, for any column vector $f\in\mathbb{C}^{2}$, we have
\begin{align*}
\text{Re}[f^{H}(I+S(x,k))f]=&\frac{1}{2}[f^{H}(I+S(x,k))f+f^{H}(I+S^{H}(x,k))f]\\
=&f^{H}(I+S_{H}(x,k))f.
\end{align*}
Therefore, the following very important conclusion is drawn in terms of investigating the solvability of the RH problem.
\begin{proposition}\label{p5}
If the reflection coefficients $r_{1}(k), r_{2}(k)$ satisfy the condition \eqref{smallr}, then there exists positive constants $\eta_{-}$ and $\eta_{+}$ for any $x\in\mathbb{R}$ and any column vector $f\in\mathbb{C}^2$ such that
\begin{align}\label{4.7}
\text{Re}[f^{H}(I+S(x,k))f]\geq \eta_{-}f^{H}f,\ \ k\in\mathbb{R}\cup i\mathbb{R},
\end{align}
\begin{align}\label{4.8}
\|(I+S(x,k))f\|\leq \eta_{+}\|f\|,\ \ k\in\mathbb{R}\cup i\mathbb{R}.
\end{align}
where $\|f\|$ denotes the Euclidean norm of vectors in $\mathbb{C}^2$.
\end{proposition}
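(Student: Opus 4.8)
The plan is to reduce both inequalities to elementary, uniform-in-$(x,k)$ estimates on the $2\times2$ matrix $S(x,k)$, exploiting the smallness bound \eqref{smallr} together with the fact that $k^2\in\mathbb{R}$ for $k\in\mathbb{R}\cup i\mathbb{R}$, so that $|e^{\pm 2ik^2x}|=1$ everywhere on the contour. This last observation is precisely what forces all constants below to be independent of $x$ and $k$.

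For the coercivity estimate \eqref{4.7} I would start from the identity $\text{Re}[f^{H}(I+S)f]=f^{H}(I+S_{H})f$ recorded immediately above the statement, so that it suffices to bound the smallest eigenvalue of the Hermitian matrix $I+S_{H}(x,k)$ from below. Writing $A:=1-\text{Re}[r_1 r_2]$ for the $(1,1)$ entry and $C:=\tfrac12(\bar r_1-r_2)e^{-2ik^2x}$ for the $(1,2)$ entry, I would compute the determinant using $\tfrac14|r_1-\bar r_2|^2=\tfrac14(|r_1|^2+|r_2|^2-2\text{Re}[r_1 r_2])$, namely
\begin{align*}
\det(I+S_{H})=A-|C|^2=1-\tfrac12\text{Re}[r_1 r_2]-\tfrac14\big(|r_1|^2+|r_2|^2\big).
\end{align*}
Invoking \eqref{smallr} in the form $|r_j(k)|\le c_0<1$ then yields the two uniform bounds $A\ge 1-c_0^2>0$ and $\det(I+S_{H})\ge 1-c_0^2>0$, while the trace satisfies $\operatorname{tr}(I+S_{H})=A+1=2-\text{Re}[r_1 r_2]\le 2+c_0^2$.

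With positivity of both the $(1,1)$ minor and the determinant in hand, $I+S_{H}$ is positive definite; for a $2\times2$ positive definite Hermitian matrix its eigenvalues obey $\lambda_-\lambda_+=\det$ and $\lambda_-+\lambda_+=\operatorname{tr}$, whence $\lambda_+\le\operatorname{tr}$ and therefore $\lambda_-=\det/\lambda_+\ge\det/\operatorname{tr}$. I would then conclude
\begin{align*}
\text{Re}[f^{H}(I+S)f]=f^{H}(I+S_{H})f\ge\lambda_-\,f^{H}f\ge\frac{1-c_0^2}{2+c_0^2}\,f^{H}f,
\end{align*}
so that \eqref{4.7} holds with $\eta_-:=(1-c_0^2)/(2+c_0^2)>0$, uniformly in $x$ and $k$. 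For the boundedness estimate \eqref{4.8} I would simply control the operator norm of $I+S(x,k)$ by a constant: since each entry of $S$ is a product of reflection coefficients times a unimodular exponential, the Hilbert--Schmidt norm obeys $\sum_{i,j}|S_{ij}|^2=|r_1 r_2|^2+|r_1|^2+|r_2|^2\le c_0^4+2c_0^2$ uniformly in $(x,k)$, so $\|(I+S)f\|\le\big(1+c_0\sqrt{c_0^2+2}\big)\|f\|$ gives \eqref{4.8} with $\eta_+:=1+c_0\sqrt{c_0^2+2}$.

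I do not anticipate a genuine obstacle: once the Hermitian-part identity and the smallness bound \eqref{smallr} are available, the content is purely linear algebra. The only point requiring care is uniformity --- one must verify that the phases $e^{\pm 2ik^2x}$ never enter the magnitudes of the entries of $S_{H}$ or $S$, which is exactly where the restriction to the cross $\mathbb{R}\cup i\mathbb{R}$ (on which $k^2$ is real) is essential; off this contour the exponentials cease to be unimodular and the constants $\eta_\pm$ would fail to be uniform.
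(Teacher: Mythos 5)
Your proof is correct and follows essentially the same route as the paper: both reduce \eqref{4.7} to a uniform lower bound on the smallest eigenvalue of the Hermitian part $I+S_{H}(x,k)$, using \eqref{smallr} together with the unimodularity of $e^{\pm 2ik^{2}x}$ on $\mathbb{R}\cup i\mathbb{R}$, and both obtain \eqref{4.8} by directly bounding the entries of $I+S$. The only difference is in the mechanics: where the paper diagonalizes $I+S_{H}$, writes the eigenvalues $\lambda_{\pm}$ explicitly and rationalizes $\lambda_{-}$ before asserting the existence of $\eta_{-}$, you use the leading-minor/determinant positivity criterion and the bound $\lambda_{-}\geq \det(I+S_{H})/\operatorname{tr}(I+S_{H})$, which has the mild advantage of producing the explicit constants $\eta_{-}=(1-c_{0}^{2})/(2+c_{0}^{2})$ and $\eta_{+}=1+c_{0}\sqrt{c_{0}^{2}+2}$.
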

\begin{proof}
When the reflection coefficient $r_{1}(k), r_{2}(k)$ satisfy \eqref{smallr}, it follows from \eqref{pd} that $I+S_{H}(x,k)$ is a positive definite matrix. Thus there exists a unitary
matrix $U$ such that
\begin{align*}
U^{H}(I+S_{H}(x,k))U=\text{diag}[\lambda_{-}(k),\lambda_{+}(k)],
\end{align*}
where $\lambda_{-}(k),\lambda_{+}(k)$ are the eigenvalues of the matrix $I+S_{H}(x,k)$ and have the following expressions
\begin{align*}
\lambda_{\pm}(k)=\frac{2-\text{Re}[r_{1}(k)r_{2}(k)]\pm\sqrt{\text{Re}^{2}[r_{1}(k)r_{2}(k)]+|r_{1}(k)-\overline{r}_{2}(k)|^{2}}}{2}.
\end{align*}
Since there exists $c_{0}$ that makes $|r_{j}(k)|\leq c_{0}<1$, there exists a constant $\eta_{-}>0$ such that
\begin{align*}
\lambda_{-}(k)=\frac{4-|r_{1}+\overline{r}_{2}|^{2}}{4-2\text{Re}[r_{1}r_{2}]+2\sqrt{\text{Re}^{2}[r_{1}r_{2}]+|r_{1}-\overline{r}_{2}|^{2}}}
\geq \eta_{-}.
\end{align*}
We set that $U^{H}f$ is equal to $g=[g_{1},g_{2}]^{T}$, then
\begin{align*}
\text{Re}[f^{H}(I+S(x,k))f]=&f^{H}(I+S_{H}(x,k))f =\lambda_{-}(k)|g_{1}|^{2}+\lambda_{+}(k)|g_{2}|^{2}\\
\geq&\eta_{-}g^{H}g
=\eta_{-}f^{H}f.
\end{align*}

Let $f=[f_{1},f_{2}]^{T}$, then
\begin{align*}
(I+S(x,k))f=\begin{bmatrix}(1-r_1(k)r_2(k))f_{1}-r_2(k)e^{-2ik^2x}f_{2}\\r_1(k)e^{2ik^2x}f_{1}+f_{2}\end{bmatrix}.
\end{align*}
It is clear from the calculation that when $r_{1}(k), r_{2}(k)$ satisfy \eqref{smallr}, there exists $\eta_{+}$ such that
\begin{equation*}
\|(I+S(x,k))f\|^{2}\leq\eta_{+}^{2}(|f_{1}|^{2}+|f_{2}|^{2})\leq \eta_{+}^{2}\|f\|^{2}.
\end{equation*}
\end{proof}

For a given function $f\in L^{p}(\mathbb{R})$ with $1\leq p<\infty$, the Cauchy operator $\mathcal{C}$ is defined by
\begin{equation}
\mathcal{C}(f)(z):=\frac{1}{2\pi i}\int_{\mathbb{R}}\frac{f(s)}{s-z}ds,\ \ z\in\mathbb{C}\setminus\mathbb{R}.
\end{equation}
When $z\pm\varepsilon i \ (z\in\mathbb{R}, \epsilon>0)$ approach to a point on the real axis  $z\in\mathbb{R}$ transversely from the upper
and the lower half planes respectively, the Cauchy operator $\mathcal{C}$ becomes the Cauchy projection operators $\mathcal{C}^{\pm}$ defined respectively by
\begin{equation}
\mathcal{C}^{\pm}(f)(z):=\lim_{\varepsilon\rightarrow 0}\frac{1}{2\pi i}\int_{\mathbb{R}}\frac{f(s)}{s-(z\pm\varepsilon i)}ds,\ \ z\in\mathbb{R}.
\end{equation}
If a function $f$ in $L^{p}(\mathbb{R}), \ 1\leq p<\infty$, the Hilbert transform $\mathcal{H}$ is defined by
\begin{align}
\mathcal{H}(f)(z):=\frac{1}{\pi }\lim_{\varepsilon\rightarrow 0}(\int_{-\infty}^{z-\varepsilon}+\int_{z+\varepsilon}^{\infty})\frac{f(s)}{s-z}ds,\ \ z\in\mathbb{R}.
\end{align}
For every $1<p<\infty$, $\mathcal{H}$ is a bounded operator form $L^{p}(\mathbb{R})$ to $L^{p}(\mathbb{R})$.

Some important properties of the Cauchy operator $\mathcal{C}$ and the Cauchy projection operators $\mathcal{C}^{\pm}$ are enumerated by references \cite{Dur, Duo}, see the following proposition.
\begin{proposition}\label{p4}
For every $f\in L^{p}(\mathbb{R}),\ 1\leq p<\infty$, the Cauchy operator $\mathcal{C}$ admits the following properties:
\begin{enumerate}[label=(\roman*)]
  \item $\mathcal{C}(f)(z)$ is analytic in $z\in\mathbb{C}\setminus\mathbb{R}$.
  \item $\mathcal{C}(f)(z)\rightarrow 0, \ \ |z|\rightarrow\infty$.
  \item In particular, if $f\in L^{1}(\mathbb{R})$,  then
      \begin{align}\label{4.5}
      \lim_{|z|\rightarrow\infty}z \mathcal{C}(f)(z)=-\frac{1}{2\pi i}\int_{\mathbb{R}}f(s)ds, \ \ z\in\mathbb{C}\setminus\mathbb{R}.
      \end{align}
\end{enumerate}
For every $f\in L^{p}(\mathbb{R}),\ 1< p<\infty$, the Cauchy projection operators $\mathcal{C}^{\pm}$ admit the following properties:
\begin{enumerate}[label=(\roman*)]
  \item There exists a positive constant $C_{p}$ (with $C_{2}=1$) such that
      \begin{align}\label{4.4}
      \|\mathcal{C}^{\pm}(f)\|_{L^p}\leq C_p\|f\|_{L^p}.
      \end{align}
  \item $\mathcal{C}^{\pm}(f)(z)=\pm\frac{1}{2}f(z)-\frac{i}{2}\mathcal{H}(f)(z),\
      \ z\in\mathbb{R}.$
  \item $\mathcal{C}^{+}-\mathcal{C}^{-}=I$,\ \ $\mathcal{C}^{+}+\mathcal{C}^{-}=-i\mathcal{H}$.
\end{enumerate}
\end{proposition}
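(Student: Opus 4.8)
The plan is to treat the two clusters of claims separately, since the three statements about the Cauchy operator $\mathcal{C}$ are elementary consequences of the explicit kernel $\tfrac{1}{2\pi i(s-z)}$, whereas the three statements about the projections $\mathcal{C}^{\pm}$ rest on the two deep classical inputs recorded in \cite{Dur, Duo}: the Sokhotski--Plemelj formula and the $L^{p}$-boundedness of the Hilbert transform.

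For the operator $\mathcal{C}$ I would argue directly from the integral. To get (i), fix a compact set $E\subset\mathbb{C}\setminus\mathbb{R}$, set $d:=\mathrm{dist}(E,\mathbb{R})>0$, and note that both $s\mapsto(s-z)^{-1}$ and its $z$-derivative $(s-z)^{-2}$ lie in $L^{p'}(\mathbb{R})$ uniformly for $z\in E$ (the tails decay like $|s|^{-1}$, which is in $L^{p'}$ for $p<\infty$, while near the singular abscissa the kernel is bounded by $d^{-1}$); Hölder's inequality then legitimizes differentiation under the integral sign, giving analyticity. For (ii) I would use density: approximate $f$ in $L^{p}$ by $g\in C_{c}(\mathbb{R})$; the compactly supported part satisfies $|\mathcal{C}(g)(z)|\le\|g\|_{L^{1}}/\bigl(2\pi\,\mathrm{dist}(z,\mathrm{supp}\,g)\bigr)\to0$ as $|z|\to\infty$, while the remainder is controlled through the Hölder bound $\|(s-z)^{-1}\|_{L^{p'}_{s}}=c_{p}\,|\mathrm{Im}\,z|^{-1/p}$. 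For (iii), with $f\in L^{1}$, I would use the algebraic identity
\[
z\mathcal{C}(f)(z)+\frac{1}{2\pi i}\int_{\mathbb{R}}f(s)\,ds=\frac{1}{2\pi i}\int_{\mathbb{R}}\frac{s\,f(s)}{s-z}\,ds ,
\]
which reduces the claim to showing that the right-hand side vanishes; since $s(s-z)^{-1}\to0$ pointwise as $|z|\to\infty$, splitting the integral over $\{|s|\le R\}$ and $\{|s|>R\}$ and letting $R$ grow suitably yields the limit, as in \cite{Dur}.

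For the projection operators, property (i) is precisely M.~Riesz's theorem on the $L^{p}$-boundedness of the Hilbert transform, which I would quote; the value $C_{2}=1$ follows from Plancherel, because $\mathcal{H}$ has Fourier symbol $-i\,\mathrm{sgn}(\xi)$ of unit modulus, so by (ii) each $\mathcal{C}^{\pm}$ has symbol of modulus at most one. Property (ii) is the Sokhotski--Plemelj formula: writing $\tfrac{1}{s-(z\pm i\varepsilon)}=\tfrac{s-z}{(s-z)^{2}+\varepsilon^{2}}\pm\tfrac{i\varepsilon}{(s-z)^{2}+\varepsilon^{2}}$, the first piece converges as $\varepsilon\to0$ to the principal-value kernel (contributing $-\tfrac{i}{2}\mathcal{H}(f)$ after the factor $\tfrac{1}{2\pi i}$), while the second is a Poisson approximate identity converging to $\pm\tfrac12 f(z)$. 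Property (iii) is then immediate: subtracting and adding the two formulas in (ii) gives $\mathcal{C}^{+}-\mathcal{C}^{-}=I$ and $\mathcal{C}^{+}+\mathcal{C}^{-}=-i\mathcal{H}$.

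The main obstacle is not in the bookkeeping above but in the two substantive facts underpinning the projections: the existence of the non-tangential boundary limits defining $\mathcal{C}^{\pm}(f)$ for a general $f\in L^{p}$, and the $L^{p}$-continuity of the resulting operators (M.~Riesz). These are genuinely nonelementary and are exactly the classical results collected in \cite{Dur, Duo}, so I would invoke them rather than reprove them; the actual labor of the proof then consists of the kernel estimates for $\mathcal{C}$ in (i)--(iii) and the purely algebraic deduction of (iii) for $\mathcal{C}^{\pm}$.
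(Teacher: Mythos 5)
Your proof is sound, but you should know that the paper itself offers no proof of this proposition: it is presented purely as a list of classical facts imported from \cite{Dur,Duo} (the identical statement, with the same citation practice, appears as Proposition 2 of \cite{Pel}, from which this paper takes it). Your writeup therefore does strictly more than the paper does. The division of labor you chose is the right one: the three properties of $\mathcal{C}$ follow from elementary kernel estimates (your computation $\|(s-z)^{-1}\|_{L^{p'}_{s}}=c_{p}|\operatorname{Im}z|^{-1/p}$ is correct, as is the algebraic identity reducing \eqref{4.5} to showing $\int_{\mathbb{R}}sf(s)(s-z)^{-1}ds\to0$), while the two genuinely deep ingredients for $\mathcal{C}^{\pm}$ --- a.e.\ existence of the boundary values and M.~Riesz's $L^{p}$ bound --- are exactly what \cite{Dur,Duo} are cited for, with $C_{2}=1$ from Plancherel and the Plemelj split into principal-value part plus Poisson kernel giving property (ii), from which (iii) is pure algebra. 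All of this is correct and is the standard proof.

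One point you should state more carefully, since your own estimates expose it: the unrestricted limits $|z|\to\infty$ in the decay property and in \eqref{4.5} do not hold uniformly over $\mathbb{C}\setminus\mathbb{R}$ for general $f\in L^{p}$. Your Hölder bound controls the remainder only through $|\operatorname{Im}z|^{-1/p}$, so the density argument yields decay when $|z|\to\infty$ with $|\operatorname{Im}z|$ bounded below, or in sectors $|\operatorname{Im}z|\ge\delta|z|$; similarly, the dominated-convergence step behind \eqref{4.5} needs a uniform bound on $|s/(s-z)|$, which holds in such sectors (where $|s/(s-z)|\le\max(2,2/\delta)$) but fails as $z$ escapes to infinity hugging the real axis, where spike examples show $\mathcal{C}(f)(z)$ need not tend to zero. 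This restriction on the mode of convergence is implicit in the classical references and in how the paper actually invokes \eqref{4.5} (asymptotic expansions of solutions analytic in $\mathbb{C}^{\pm}$), so it is a matter of making your hypotheses explicit rather than a defect in your strategy.
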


In the following we consider the solvability of the RH Problem \ref{RH4} in the space of $L_{z}^{2}(\mathbb{R})$. It follows from Proposition \ref{p4} that if $N_{j,-}(x,k)\in L_{z}^{2}(\mathbb{R})$ is a solution of the Fredholm integral equation
\begin{align}\label{4.11}
N_{j,-}(x,k)=\mathcal{C}^{-}(N_{j,-}(x,k)S(x,k)+F_{j}(x,k))(z),\ \ j=1,2,\  z\in\mathbb{R},
\end{align}
then $N_{j,+}(x,k)\in L_{z}^{2}(\mathbb{R})$ can be obtained from the projection equation
\begin{align}\label{4.12}
N_{j,+}(x,k)=\mathcal{C}^{+}(N_{j,-}(x,k)S(x,k)+F_{j}(x,k))(z),\ \ j=1,2,\ z\in\mathbb{R}.
\end{align}
A further application of Proposition \ref{p4} yields that for any $x\in\mathbb{R}$, the RH Problem \ref{RH4} has a solution given by the Cauchy operator
\begin{align}\label{4.10}
N_{j}(x,k)=\mathcal{C}(N_{j,-}(x,k)S(x,k)+F_{j}(x,k))(z),\ \ j=1,2,\ z\in\mathbb{C}\setminus\mathbb{R}.
\end{align}

The integral equation \eqref{4.11} can be equivalently rewritten as
\begin{align}\label{4.13}
(I-\mathcal{C}^{-}_{S})N_{j,-}(x,k)=\mathcal{C}^{-}(F_{j})(x,k),\ \ j=1,2,\ k\in\mathbb{R}\cup i\mathbb{R},
\end{align}
where $\mathcal{C}^{-}_{S}N_{j,-}:=\mathcal{C}^-(N_{j,-}S)$.
We illustrate the solvability of the integral equation \eqref{4.11} in Proposition \ref{p78} below by considering its equivalent form \eqref{4.13}. It should be noted that the subscript $j$ takes either 1 or 2, and we will not emphasize this point in the following analysis.
\begin{proposition}\label{p78}
If the reflection coefficients $r_{1}(k), r_{2}(k)\in L_{z}^{2}(\mathbb{R})$ satisfy the condition \eqref{smallr} and $r_{\pm}(z)\in L_{z}^{2}(\mathbb{R})$, then $\forall x\in\mathbb{R}$, there exists a unique solution $N_{j,-}(x,k)\in L_{z}^{2}(\mathbb{R})$ of the linear inhomogeneous equation \eqref{4.13}. Moreover, the inverse operator $(I-\mathcal{C}^{-}_{S})^{-1}$ exists and is a bounded operator from $L_{z}^{2}(\mathbb{R})$ to $L_{z}^{2}(\mathbb{R})$.
\end{proposition}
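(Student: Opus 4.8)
The plan is to show that $I-\mathcal{C}^{-}_{S}$ is a bounded, invertible operator on $L^{2}_{z}(\mathbb{R})$, after which unique solvability of \eqref{4.13} for the datum $\mathcal{C}^{-}(F_{j})$ is immediate. First I would record boundedness: since $\mathcal{C}^{-}_{S}N=\mathcal{C}^{-}(NS)$ and $S(x,\cdot)\in L^{\infty}_{z}$ for every $x$ (its entries are $r_{1}r_{2}$, $r_{1}e^{2ik^{2}x}$ and $r_{2}e^{-2ik^{2}x}$, all in $L^{\infty}_{z}$ by Corollary \ref{csmall}), the $L^{2}$-boundedness of $\mathcal{C}^{-}$ from Proposition \ref{p4} (with $C_{2}=1$) gives $\|\mathcal{C}^{-}_{S}N\|_{L^{2}_{z}}\le\|S\|_{L^{\infty}_{z}}\|N\|_{L^{2}_{z}}$; the hypotheses $r_{1},r_{2},r_{\pm}\in L^{2}_{z}$ together with \eqref{smallr} likewise ensure $F_{j}\in L^{2}_{z}$, hence $\mathcal{C}^{-}(F_{j})\in L^{2}_{z}$, so \eqref{4.13} is a genuine identity in $L^{2}_{z}$. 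I would stress at this point that one cannot invert by a Neumann series, since the constant $c_{0}$ in \eqref{smallr} is close to $1$ and $\|S\|_{L^{\infty}_{z}}$ need not be $<1$; the positivity of Proposition \ref{p5} must be used instead.

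The core of the argument is to prove that $I-\mathcal{C}^{-}_{S}$ is Fredholm of index zero and injective, which together force invertibility. For the index I would invoke the general theory of Cauchy singular integral operators: the associated jump matrix is $I+S$, and a direct computation from \eqref{3.7} gives $\det(I+S)=1$, so the operator is Fredholm. Deforming $S\mapsto tS$, $t\in[0,1]$, one has $\det(I+tS)=1+r_{1}r_{2}\,t(t-1)$, and since $|r_{1}r_{2}|\le c_{0}^{2}$ and $|t(t-1)|\le\tfrac14$ this determinant stays bounded away from $0$ uniformly; the index is therefore constant along the homotopy and equals that of $I$ at $t=0$, namely $0$.

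For injectivity I would run the vanishing lemma of \cite{Zho}. Suppose $N\in L^{2}_{z}$ with $(I-\mathcal{C}^{-}_{S})N=0$, so $N=\mathcal{C}^{-}(NS)=:N_{-}$, and set $N_{+}:=\mathcal{C}^{+}(NS)$. By $\mathcal{C}^{+}-\mathcal{C}^{-}=I$ from Proposition \ref{p4} one has $N_{+}-N_{-}=NS=N_{-}S$, i.e.\ $N_{+}=N_{-}(I+S)$. Because $N_{+}$ is a boundary value of a function analytic in the upper half $z$-plane and $N_{-}$ of one analytic in the lower half $z$-plane, both decaying at infinity, the orthogonality of the Cauchy (Hardy) projections gives $\int_{\mathbb{R}}N_{+}N_{-}^{H}\,dz=0$, whence $\int_{\mathbb{R}}N_{-}(I+S)N_{-}^{H}\,dz=0$. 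Taking the real part of the trace, using $\mathrm{Re}\,\mathrm{tr}[N_{-}(I+S)N_{-}^{H}]=\mathrm{tr}[N_{-}(I+S_{H})N_{-}^{H}]$ and applying the lower bound \eqref{4.7} of Proposition \ref{p5} row by row, yields
\begin{equation*}
0=\int_{\mathbb{R}}\mathrm{tr}\!\left[N_{-}(I+S_{H})N_{-}^{H}\right]dz\ \ge\ \eta_{-}\int_{\mathbb{R}}\mathrm{tr}\!\left[N_{-}N_{-}^{H}\right]dz=\eta_{-}\|N_{-}\|_{L^{2}_{z}}^{2},
\end{equation*}
so $N_{-}=0$ and hence $N=0$. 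Thus $I-\mathcal{C}^{-}_{S}$ is injective; combined with index zero it is invertible, and boundedness of $(I-\mathcal{C}^{-}_{S})^{-1}$ follows from the open mapping theorem.

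The step I expect to be the main obstacle is the invertibility itself, concentrated in the vanishing lemma. The delicate points are converting the pointwise positivity of Proposition \ref{p5} into the global energy identity via the Hardy-space orthogonality $\int_{\mathbb{R}}N_{+}N_{-}^{H}\,dz=0$ (which requires verifying that $N_{\pm}$ are genuine analytic boundary values with the correct decay, after passing through the $z=k^{2}$ reduction), and pairing this with the Fredholm index computation, since smallness alone provides neither a contraction nor compactness of $\mathcal{C}^{-}_{S}$.
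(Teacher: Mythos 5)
Your proposal is correct, and its core — the vanishing-lemma injectivity argument — is essentially the paper's own: the paper takes a putative kernel element $f$, sets $f_{1}(z)=\mathcal{C}(fS)(z)$ and $f_{2}(z)=[\mathcal{C}(fS)(\overline{z})]^{H}$, and applies Cauchy--Goursat on a large semicircle together with the decay \eqref{4.5} to get $\int_{\mathbb{R}}f(I+S)f^{H}dz=0$, contradicting \eqref{4.7}; your Hardy-space orthogonality $\int_{\mathbb{R}}N_{+}N_{-}^{H}dz=0$ is the same computation, and in fact slightly cleaner, since orthogonality of $H^{2}_{\pm}$ follows from Fourier support alone and does not even need the $O(|z|^{-2})$ arc decay (which you do have, as $fS\in L^{1}_{z}$). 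For the Fredholm index the paper simply cites Beals--Coifman and Zhou; your explicit homotopy $\det(I+tS)=1+r_{1}r_{2}\,t(t-1)$, bounded away from zero by $1-c_{0}^{2}/4$ via \eqref{smallr}, is a nice concrete justification, though it rests on the same singular-integral symbol theory you would still have to invoke for Fredholmness at each $t$.

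The genuine divergence is the final step. You obtain boundedness of $(I-\mathcal{C}^{-}_{S})^{-1}$ softly, from bijectivity plus the open mapping theorem; the paper instead proves a quantitative resolvent bound: it decomposes $g=g_{+}-g_{-}$ via the projection identity $\mathcal{C}^{+}-\mathcal{C}^{-}=I$, runs two further contour-energy arguments (one in each half-plane) to get $\|g_{-}\|_{L^{2}_{z}}\leq\eta_{-}^{-1}\|G\|_{L^{2}_{z}}$ and $\|g_{+}\|_{L^{2}_{z}}\leq\eta_{-}^{-1}\eta_{+}\|G\|_{L^{2}_{z}}$ (\eqref{4.17}, \eqref{4.19}), and concludes \eqref{4.15} with $\eta$ depending only on the constants $\eta_{\pm}$ of Proposition \ref{p5} — hence \emph{uniform in $x$}, since $\eta_{\pm}$ depend only on $c_{0}$. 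Your open-mapping argument proves the proposition as literally stated (existence and boundedness of the inverse for each fixed $x$), but gives no control on how the inverse's norm depends on $x$, even though the operator $\mathcal{C}^{-}_{S}$ varies with $x$ through $S(x,k)$. That uniformity is exactly what the paper uses downstream: in the proof of Lemma \ref{l10} the bound \eqref{4.15} is applied with an $x$-independent $\eta$ before taking $\sup_{x}$ in \eqref{4.53}--\eqref{4.56}. So while your proof is a valid and somewhat more self-contained proof of Proposition \ref{p78} itself, to serve the paper's purposes it would need to be upgraded — most naturally by adding the paper's $g_{\pm}$ energy estimates, which your positivity setup already makes available.
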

\begin{proof}
By the definition of $S(x,k)$ \eqref{3.7} and the definition of $F_{j}(x,k)$ \eqref{3.28}, it is immediate that $\forall x\in\mathbb{R}$, $S(x,k)\in L^{2}_{z}(\mathbb{R})\cap L^{\infty}_{z}(\mathbb{R})$ and $F_{j}(x,k)\in L^{2}_{z}(\mathbb{R})$. According to references \cite{Bea,Bea1,Zho}, it is known that the operator $I-\mathcal{C}^{-}_{S}$ is a Fredholm operator of the index zero. So we only need to prove that the zero solution to the equation
\begin{equation}\label{4.13+}
(I-\mathcal{C}^{-}_{S})f=0
\end{equation}
is unique in $L_{z}^{2}(\mathbb{R})$.

Suppose that there exists a nonzero row vector solution $f\in L_{z}^{2}(\mathbb{R})$ for \eqref{4.13+}, then we define
\begin{align*}
f_{1}(z):=\mathcal{C}(fS)(z),\ \ f_{2}(z):=[\mathcal{C}(fS)(\overline{z})]^{H}.
\end{align*}
By the property (i) of Cauchy operator in Proposition \ref{p4}, $f_{1}(z), f_{2}(z)$  are analytic functions in  $\mathbb{C}\setminus\mathbb{R}$. Making a semicircle $C_{R}$ in the upper half plane $\mathbb{C}^{+}$ with the zero as the centre and $R$ as the radius, we have
\begin{align*}
\int_{-R}^{R} f_{1}(s)f_{2}(s)ds=\int_{C_{R}}f_{1}(s)f_{2}(s)ds
\end{align*}
by the Cauchy-Goursat theorem. Let $R\rightarrow \infty$ and using the limit \eqref{4.5} and the property (iii) of Cauchy projection operators in Proposition \ref{p4} we get
\begin{align*}
0=&\int_{\mathbb{R}}f_{1}(s)f_{2}(s)ds\\
=&\int_{\mathbb{R}}\mathcal{C}^{+}(fS)(s)[\mathcal{C}^{-}(fS)(\overline{s})]^{H}ds\\
=&\int_{\mathbb{R}}[\mathcal{C}^{-}(fS)(s)+fS(s)][\mathcal{C}^{-}(fS)(s)]^{H}ds\\
=&\int_{\mathbb{R}}f(s)(I+S)f(s)^{H}ds.
\end{align*}
The above equality contradicts the result \eqref{4.7}, so the linear equation \eqref{4.13} has a unique solution $N_{j,-}(k)$ in $L_{z}^{2}(\mathbb{R})$.

In the following we prove that the operator $I-\mathcal{C}^{-}_{S}$ is invertible in the space $L_{z}^{2}(\mathbb{R})$. For every row vector $g\in L_{z}^{2}(\mathbb{R})$, we write
\begin{equation}\label{4.15}
(I-\mathcal{C}^{-}_{S})g=G.
\end{equation}
From \eqref{4.4} it is easy to see that $G\in L_{z}^{2}(\mathbb{R})$.
Suppose $g$ can be decomposed as $g=g_{+}-g_{-}$, from the property (iii) of Cauchy projection operators in Proposition \ref{p4} we know that $g_{+}, g_{-}$ satisfy the following two equations respectively
\begin{equation}\label{4.16}
g_{-}-\mathcal{C}^{-}(g_{-}S)=\mathcal{C}^{-}(G),\ \ g_{+}-\mathcal{C}^{-}(g_{+}S)=\mathcal{C}^{+}(G).
\end{equation}
From the above proof and the fact that $\mathcal{C}^{-}(G),\mathcal{C}^{+}(G) \in L_{z}^{2}(\mathbb{R})$, it follows that the decomposition $g=g_{+}-g_{-}$ exists and is unique.

We first consider $g_{-}$. Two analytic functions in $\mathbb{C}\setminus\mathbb{R}$ are defined by
\begin{align*}
h_{1}(z):=\mathcal{C}(g_{-}S)(z),\ h_{2}(z):=[\mathcal{C}(g_{-}S+G)(\overline{z})]^{H}.
\end{align*}
Similarly, we make a semicircle in $\mathbb{C}^{+}$ with zero as its centre and $R$ as its radius, then letting $R\rightarrow\infty$ gives
\begin{align*}
0=&\int_{\mathbb{R}} h_{1}(z)h_{2}(z)dz\\
=&\int_{\mathbb{R}}\mathcal{C}^{+}(g_{-}S)[\mathcal{C}^{-}(g_{-}S+G)]^{H}dz\\
=&\int_{\mathbb{R}}[\mathcal{C}^{-}(g_{-}S)+g_{-}S][\mathcal{C}^{-}(g_{-}S+G)]^{H}dz\\
=&\int_{\mathbb{R}}[g_{-}-\mathcal{C}^{-}(G)+g_{-}S]g_{-}^{H}dz,
\end{align*}
which implies that
\begin{equation*}
\int_{\mathbb{R}}g_{-}(I+S)g_{-}^{H}dz=\int_{\mathbb{R}}\mathcal{C}^{-}(G)g_{-}^{H}dz.
\end{equation*}
According to the estimated equations \eqref{4.7} and \eqref{4.4}, there is a positive constant $\eta_{-}$ such that
\begin{equation*}
\eta_{-}\|g_{-}\|_{L_{z}^{2}}^{2}\leq\text{Re}\int_{\mathbb{R}}g_{-}(I+S)g_{-}^{H}dz=\text{Re}\int_{\mathbb{R}}\mathcal{C}^{-}(G)g_{-}^{H}dz\leq\|G\|_{L_{z}^{2}}\|g_{-}\|_{L_{z}^{2}}.
\end{equation*}
Therefore we have
\begin{equation}\label{4.17}
\|(I-\mathcal{C}^{-}_{S})^{-1}\mathcal{C}^{-}(G)\|_{L^2_z}=\|g_{-}\|_{L^2_z}\leq \eta_{-}^{-1}\|G\|_{L^2_z}.
\end{equation}

Next we consider $g_{+}$ and use the property $\mathcal{C}^{+}-\mathcal{C}^{-}=I$ of the Cauchy projection operators to rewrite the second equation of \eqref{4.16} in the following form
\begin{equation*}
g_{+}(I+S)-\mathcal{C}^{+}(g_{+}S)=\mathcal{C}^{+}(G).
\end{equation*}
This time, we make a semicircle in the lower half plane $\mathbb{C}^{-}$, and using similar means as above we can obtain
\begin{equation*}
\int_{\mathbb{R}}g_{+}(I+S)^{H}g_{+}^{H}dz=\int_{\mathbb{R}}\mathcal{C}^{+}(G)(I+S)^{H}g_{+}^{H}dz.
\end{equation*}
According to the estimated equations \eqref{4.7} and \eqref{4.8}, there are positive constants $\eta_{-}, \eta_{+}$ such that
\begin{align*}
\eta_{-}\|g_{+}\|_{L_{z}^{2}}^{2}\leq&\text{Re}\int_{\mathbb{R}}g_{+}(I+S)^{H}g_{+}^{H}dz=\text{Re}\int_{\mathbb{R}}\mathcal{C}^{+}(G)(I+S)^{H}g_{+}^{H}dz\\
\leq& \eta_{+}\|G\|_{L_{z}^{2}}\|g_{+}\|_{L_{z}^{2}}.
\end{align*}
So we have
\begin{equation}\label{4.19}
\|(I-\mathcal{C}^{-}_{S})^{-1}\mathcal{C}^{+}(G)\|_{L^2_z}=\|g_{+}\|_{L^2_z}\leq \eta_{-}^{-1}\eta_{+}\|G\|_{L^2_z}.
\end{equation}
Combining \eqref{4.17} with \eqref{4.19} it follows that there exists a positive constant $\eta$ such that
\begin{equation}\label{4.15}
\|(I-\mathcal{C}^{-}_{S})^{-1}G\|_{L^2_z}\leq \eta \|G\|_{L^2_z}.
\end{equation}
\end{proof}

From the above Proposition \ref{p78} it can be seen that \eqref{4.10} is a solution to the RH problem \ref{RH4}.
Therefore RH problem \ref{RH3} is also solvable according to the relationship \eqref{3.29} between the RH problem \ref{RH3} and \ref{RH4}.
By the Beals-Coifman theorem in \cite{Bea} it follows that the solution to the RH problem \ref{RH3} is unique, and hence the solution to the RH problem \ref{RH4} is also unique.

We consider mainly the RH problem \ref{RH3} in the $z$-plane. We note that
\begin{equation}\label{4.1} M_{\pm}(x,z):=[m_{\pm}(x,z),n_{\pm}(x,z)],
\end{equation}
and the superscripts $(1)$, $(2)$ used later represent the first and second columns of the square matrix, respectively. However, for column vectors the superscripts $(1)$ and $(2)$ denote the first and second rows of the vector, respectively.  Recalling the relation \eqref{3.29} as well as \eqref{3.27}, then we have
\begin{align*}
N_{1,\pm}(x,k)=&M_{\pm}(x,z)\rho_{1}(k)-\rho_{1}(k)\\
=&[m_{\pm}(x,z)-e_{1},2ik(n_{\pm}(x,z)-e_{2})].
\end{align*}
Moreover, equations \eqref{4.11}, \eqref{3.27-} and $F_{1}(x,k)=\rho_{1}(k)S(x,k)$ imply that
\begin{align*}
N_{1,\pm}(x,k)=&\mathcal{C}^{\pm}(N_{1,-}(x,k)S(x,k)+F_{1}(x,k))(z)\\
=&\mathcal{C}^{\pm}((M_{-}(x,z)-I)\rho_{1}(k)S(x,k)+\rho_{1}(k)S(x,k))(z)\\
=&\mathcal{C}^{\pm}(M_{-}(x,z)R(x,z)\rho_{1}(k))(z)\\
=&\mathcal{C}^{\pm}[(M_{-}(x,z)R(x,z))^{(1)},2ik(M_{-}(x,z)R(x,z))^{(2)}](z).
\end{align*}
Combining the above two expressions for $N_{1,\pm}(x,k)$, we can obtain
\begin{align}
m_{\pm}(x,z)-e_{1}&=\mathcal{C}^{\pm}((M_{-}(x,z)R(x,z))^{(1)})(z), \label{4.23} \\
2ik(n_{\pm}(x,z)-e_{2})&=\mathcal{C}^{\pm}(2ik(M_{-}(x,z)R(x,z))^{(2)})(z).\label{4.31}
\end{align}

Using the same means for $N_{2,\pm}(x,k)$, we have
\begin{align*}
N_{2,\pm}(x,k)=&M_{\pm}(x,z)\rho_{2}(k)-\rho_{2}(k)\\
=&[\frac{1}{2ik}(m_{\pm}(x,z)-e_{1}),n_{\pm}(x,z)-e_{2}],
\end{align*}
and
\begin{align*}
N_{2,\pm}(x,k)=&\mathcal{C}^{\pm}(N_{2,-}(x,k)S(x,k)+F_{2}(x,k))(z)\\
=&\mathcal{C}^{\pm}(M_{-}(x,z)R(x,z)\rho_{2}(k))(z)\\
=&\mathcal{C}^{\pm}[\frac{1}{2ik}(M_{-}(x,z)R(x,z))^{(1)},(M_{-}(x,z)R(x,z))^{(2)}](z).
\end{align*}
These results indicate that
\begin{align}
\frac{1}{2ik}(m_{\pm}(x,z)-e_{1})&=\mathcal{C}^{\pm}(\frac{1}{2ik}(M_{-}(x,z)R(x,z))^{(1)})(z), \label{4.32} \\
n_{\pm}(x,z)-e_{2}&=\mathcal{C}^{\pm}((M_{-}(x,z)R(x,z))^{(2)})(z).\label{4.26}
\end{align}

Notice that we can combine equations \eqref{4.23}, \eqref{4.26} and write them together in the form
\begin{equation}\label{4.27}
M_{\pm}(x,z)=I+\mathcal{C}^{\pm}(M_-(x,z)R(x,z))(z),\ \ z\in\mathbb{R}.
\end{equation}\label{4.28}
Thus the unique solution to RH problem \ref{RH3} can be expressed as
\begin{equation}\label{4.28}
M(x,z)=I+\mathcal{C}(M_-(x,z)R(x,z))(z),\ \ z\in\mathbb{C}\setminus\mathbb{R},
\end{equation}
where $M_{-}(x,z)R(x,z)$ can be rewritten as
\begin{equation}\label{MR}
M_{-}(x,z)R(x,z)=[n_{+}(x,z)r_{-}(z)e^{2izx},m_{-}(x,z)r_{+}(z)e^{-2izx}]
\end{equation}
by utilizing the equations \eqref{3.23}, \eqref{3.26} and \eqref{4.1}. Further, \eqref{4.31} and \eqref{4.32} can also be rewritten as
\begin{align}
2ik\mathcal{C}^{\pm}(m_{-}(x,z)r_{+}(z)e^{-2izx})(z)&=\mathcal{C}^{\pm}(2ikm_{-}(x,z)r_{+}(z)e^{-2izx})(z),\label{tk1}\\
\frac{1}{2ik}\mathcal{C}^{\pm}(n_{+}(x,z)r_{-}(z)e^{2izx})(z)&=\mathcal{C}^{\pm}(\frac{1}{2ik}n_{+}(x,z)r_{-}(z)e^{2izx})(z).\label{tk2}
\end{align}

By the relation \eqref{3.29} and Proposition \ref{p78}, the following corollary can easily be obtained.
\begin{corollary}\label{l9}
If the reflection coefficients $r_{1}(k), r_{2}(k)\in L_{z}^{2}(\mathbb{R})$ satisfy the condition \eqref{smallr} and $r_{\pm}(z)\in L_{z}^{2}(\mathbb{R})$, then the unique solution $M(x,z)$ to RH problem \ref{RH3} satisfies $M_{\pm}(x,z)-I\in L_{z}^{2}(\mathbb{R})$.
\end{corollary}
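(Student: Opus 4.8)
The plan is to read the conclusion off directly from Proposition~\ref{p78} together with the algebraic relation \eqref{3.29}, using in each of the two cases $j=1,2$ the column of $N_{j,\pm}$ that carries no factor of $k$. Recall from the discussion preceding the corollary that $N_{1,\pm}(x,k)=[\,m_\pm(x,z)-e_1,\;2ik(n_\pm(x,z)-e_2)\,]$ and $N_{2,\pm}(x,k)=[\,\tfrac{1}{2ik}(m_\pm(x,z)-e_1),\;n_\pm(x,z)-e_2\,]$, while $M_\pm=[m_\pm,n_\pm]$ and $I=[e_1,e_2]$.

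First I would apply Proposition~\ref{p78} for each $j=1,2$ and each fixed $x\in\mathbb{R}$: since $r_1,r_2\in L^2_z(\mathbb{R})$ satisfy \eqref{smallr} and $r_\pm\in L^2_z(\mathbb{R})$, one has $S(x,k)\in L^2_z(\mathbb{R})\cap L^\infty_z(\mathbb{R})$ and $F_j(x,k)\in L^2_z(\mathbb{R})$. The latter is exactly where the $r_\pm\in L^2_z$ hypothesis enters, since the off-diagonal and weighted entries of $F_j$ are precisely $r_\pm(z)e^{\mp 2izx}$ up to bounded factors. Proposition~\ref{p78} then furnishes a unique $N_{j,-}(x,k)\in L^2_z(\mathbb{R})$ solving \eqref{4.13}. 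Applying $\mathcal{C}^+$ and using the boundedness \eqref{4.4} of the Cauchy projection together with $N_{j,-}S+F_j\in L^2_z(\mathbb{R})$, the projection formula \eqref{4.12} yields $N_{j,+}(x,k)\in L^2_z(\mathbb{R})$ as well. Hence $N_{1,\pm},N_{2,\pm}\in L^2_z(\mathbb{R})$.

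Next I would extract the two columns of $M_\pm-I$, taking in each case the weight-one entry. The first column of $N_{1,\pm}$ gives $m_\pm(x,z)-e_1\in L^2_z(\mathbb{R})$, and the second column of $N_{2,\pm}$ gives $n_\pm(x,z)-e_2\in L^2_z(\mathbb{R})$. Combining these with $M_\pm-I=[\,m_\pm-e_1,\;n_\pm-e_2\,]$ proves $M_\pm(x,z)-I\in L^2_z(\mathbb{R})$, which is the assertion.

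There is no genuine analytic obstacle beyond Proposition~\ref{p78}; the only point demanding care is the singular weight $2ik\sim\sqrt{z}$, which vanishes at $z=0$ and grows as $|z|\to\infty$. This is exactly why both indices $j=1,2$ are needed: recovering $n_\pm-e_2$ from the second column of $N_{1,\pm}$ would require dividing by $2ik$ (bad near $z=0$), and recovering $m_\pm-e_1$ from the first column of $N_{2,\pm}$ would require multiplying by $2ik$ (bad as $|z|\to\infty$). Selecting the weight-free column from each of $N_{1,\pm}$ and $N_{2,\pm}$ sidesteps both issues, so the bare $L^2_z$ hypotheses on the reflection coefficients suffice and no additional regularity of $kr_\pm$ or $k^{-1}r_\pm$ is invoked.
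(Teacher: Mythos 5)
Your proposal is correct and follows essentially the same route as the paper, which obtains the corollary directly from Proposition~\ref{p78} together with the relation \eqref{3.29}, reading off $m_{\pm}-e_{1}$ from $N_{1,\pm}$ and $n_{\pm}-e_{2}$ from $N_{2,\pm}$ exactly as in the identities \eqref{4.23}--\eqref{4.26}. Your closing observation about the weight $2ik$ makes explicit the reason the paper introduces both RH problems $j=1,2$, but it is the same argument.
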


\subsection{Reconstruction formulas for the potential}
According to the solution \eqref{3.22} for the RH problem \ref{RH2} and the relation \eqref{3.25} it follows that
\begin{align*}
\Gamma_{+}(x,z)=[\frac{\mu_-(x,z)}{a(z)},\gamma_+(x,z)]=\Psi_{\infty}(x)M_{+}(x,z),\\
\Gamma_{-}(x,z)=[\mu_+(x,z),\frac{\gamma_{-}(x,z)}{d(z)}]=\Psi_{\infty}(x)M_{-}(x,z).
\end{align*}
By using the limits
\begin{equation*}
\lim_{|z|\rightarrow\infty}4z\gamma_{+}^{(1)}(x,z)=-u(x)\nu_{+}^{\infty}(x),\ \ \text{and} \ \ \lim_{|z|\rightarrow\infty}z\mu_{+}^{(2)}(x,z)=\alpha_{+}^{(2)}(x)
\end{equation*}
in Lemma \ref{l2}, we have
\begin{align}
\lim_{|z|\rightarrow\infty}zn_{+}^{(1)}(x,z)=&-\frac{1}{4}u(x)e^{i\int^{+\infty}_{x}u(y)v(y)dy},\label{4.70}\\
\lim_{|z|\rightarrow\infty}zm_{-}^{(2)}(x,z)=&-\frac{1}{2i}e^{\frac{1}{2i}\int^{+\infty}_{x}u(y)v(y)dy}\partial_{x}[v(x)e^{\frac{1}{2i}\int^{+\infty}_{x}u(y)v(y)dy}].\label{4.69}
\end{align}

Because $r_{\pm}(z)\in H_{z}^{1}(\mathbb{R})\cap L_{z}^{2,1}(\mathbb{R})$, it is easy to know that $\forall x\in \mathbb{R}, R(x,z)\in L_{z}^{1}(\mathbb{R})\cap L_{z}^{2}(\mathbb{R})$ and furthermore from $M_{\pm}(x,z)-I\in L_{z}^{2}(\mathbb{R})$ we know that $M_{\pm}(x,z)R(x,z)\in L_{z}^{1}(\mathbb{R})$. Thus for any $x\in \mathbb{R}$, it follows that
\begin{equation*}
\lim_{|z|\rightarrow\infty}z\mathcal{C}(M_{-}R)(z)=-\frac{1}{2\pi i}\int_{\mathbb{R}}M_{-}(x,z)R(x,z)dz
\end{equation*}
by \eqref{4.5}.
With the help of \eqref{MR}, we can rewrite \eqref{4.28} as
\begin{align}
\begin{bmatrix}
m_{\pm}^{(1)}-1&n_{\pm}^{(1)}\\
m_{\pm}^{(2)}&n_{\pm}^{(2)}-1
\end{bmatrix}
=\mathcal{C}
\begin{bmatrix}
n_{+}^{(1)}r_{-}e^{2izx}&m_{-}^{(1)}r_{+}e^{-2izx}\\
n_{+}^{(2)}r_{-}e^{2izx}&m_{-}^{(2)}r_{+}e^{-2izx}
\end{bmatrix},\ \ z\in\mathbb{C}\setminus\mathbb{R},
\end{align}
so that the reconstruction
formulas \eqref{4.70} and \eqref{4.69} can be rewritten as
\begin{align}
u(x)e^{i\int^{+\infty}_{x}u(y)v(y)dy}=\frac{2}{\pi i}\int_{\mathbb{R}}m_{-}^{(1)}(x,z)r_{+}(z)e^{-2izx}dz,\label{4.72}\\
e^{\frac{1}{2i}\int^{+\infty}_{x}u(y)v(y)dy}\partial_{x}[v(x)e^{\frac{1}{2i}\int^{+\infty}_{x}u(y)v(y)dy}]=\frac{1}{\pi }\int_{\mathbb{R}}n_{+}^{(2)}(x,z)r_{-}(z)e^{2izx}dz.\label{4.71}
\end{align}

In fact, we cannot get any more information from the RH problem \ref{RH2}, \ref{RH3} in the $z$-plane, and the reconstruction formulas \eqref{4.72} and \eqref{4.71} are not sufficient to recover the potential $u(x)$ on the whole line $\mathbb{R}$. So next we introduce the scalar RH problem $\delta(z)$ to transform the jump matrix $R(x,z)$ of the RH problem in the $z$-plane to obtain more information.

\begin{rhp}\label{RH5}
Find a scalar function $\delta(z)$ that satisfies the following conditions
\begin{itemize}
  \item Analyticity: $\delta(z)$  is analytical in $\mathbb{C}\setminus \mathbb{R}$.
  \item Jump condition:  $\delta(z)$ has continuous boundary values $\delta_{\pm}(z)$ on $\mathbb{R}$ and
  \begin{equation}\label{4.84}
  \delta_{+}(z)-\delta_{-}(z)=\delta_{-}(z)r_{+}(z)r_{-}(z),\ \ z\in\mathbb{R}.
  \end{equation}
  \item Asymptotic condition:
  \begin{equation}\label{4.84+}
  \delta(z)\rightarrow 1,\ \ |z|\rightarrow\infty.
  \end{equation}
\end{itemize}
\end{rhp}

It follows from the properties of the Cauchy projection operator that
\begin{equation}
\delta_{\pm}(z)=e^{\mathcal{C}^{\pm}(\log(1+r_{+}r_{-}))(z)},\ \ z\in\mathbb{R},
\end{equation}
satisfy the jump condition \eqref{4.84}, where $\log(1+r_{+}(z)r_{-}(z))\in\L_{z}^{2}(\mathbb{R})$, which is proved in the Proposition \ref{p9}. The analytic continuation of functions $\delta_{\pm}(z)$ in $\mathbb{C}\setminus\mathbb{R}$ can be expressed by the Cauchy operator
\begin{align}
\delta(z)=e^{\mathcal{C}(\log(1+r_{+}r_{-}))(z)},\ \ \mathbb{C}\setminus\mathbb{R},
\end{align}
which is the unique solution to RH problem \ref{RH5}.

We set
\begin{equation}
M_{\delta}(x,z):=M(x,z)\delta^{-\sigma_{3}}(z)=M(x,z)\begin{bmatrix}\delta^{-1}(z)&0\\0&\delta(z)\end{bmatrix},
\end{equation}
and
\begin{equation}\label{rdelta}
r_{\delta,+}(z):=\delta_{+}(z)\delta_{-}(z)r_{+}(z), \ \ r_{\delta,-}(z):=\delta_{+}^{-1}(z)\delta_{-}^{-1}(z)r_{-}(z), \ \ z\in\mathbb{R}.
\end{equation}
Then, $M_{\delta}(x,z)$ satisfies the new RH problem:

\begin{rhp}\label{RH6}
Find a matrix-valued function $M_{\delta}(x,z)$ that satisfies the following conditions
\begin{itemize}
  \item Analyticity: $M_{\delta}(x,z)$  is analytical in $\mathbb{C}\setminus \mathbb{R}$.
  \item Jump condition:  $M_{\delta}(x,z)$ has continuous boundary values $M_{\delta,\pm}(x,z)$ on $\mathbb{R}$ and
     \begin{equation}\label{4.88}
      M_{\delta,+}(x,z)-M_{\delta,-}(x,z)=M_{\delta,-}(x,z)R_{\delta}(x,z),\ \ z\in\mathbb{R},
      \end{equation}
      where
      \begin{align}
      R_{\delta}(x,z)=\begin{bmatrix}0&r_{\delta,+}(z)e^{-2izx}\\r_{\delta,-}(z)e^{2izx}&r_{\delta,+}(z)r_{\delta,-}(z)
      \end{bmatrix}.
       \end{align}
  \item Asymptotic condition:
       \begin{equation}
       M_{\delta}(x,z)\rightarrow I,\ \ |z|\rightarrow\infty.
       \end{equation}
\end{itemize}
\end{rhp}

In the next analysis, we define
\begin{equation*}
\widehat{f}(\lambda)=\frac{1}{2\pi}\int_{\mathbb{R}}f(\xi)e^{-i\lambda\xi}d\xi
\end{equation*}
as the Fourier transform of the function $f$, then
\begin{equation*}
\|f\|_{L^{2}}^{2}=2\pi\|\widehat{f}\|_{L^{2}}^{2},\ \ \|f\|_{H^{1}}=\sqrt{2\pi}\|\widehat{f}\|_{L^{2,1}}.
\end{equation*}
\begin{proposition}\label{p9}
If $r_{\pm}(z)\in H_{z}^{1}(\mathbb{R})\cap L_{z}^{2,1}(\mathbb{R})$ and $r_{j}(k),\  j=1,2$ satisfy the condition \eqref{smallr}, then $r_{\delta,\pm}(z)\in H_{z}^{1}(\mathbb{R})\cap L_{z}^{2,1}(\mathbb{R})$.
\end{proposition}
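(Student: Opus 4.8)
The plan is to reduce the whole statement to the scalar function $\kappa(z):=\log(1+r_{+}(z)r_{-}(z))$ and to control the boundary factors in \eqref{rdelta} through the Hilbert transform. First I would record the algebraic fact that, by the definitions \eqref{3.17}, one has $r_{+}(z)r_{-}(z)=-r_{1}(k)r_{2}(k)$, so the smallness condition \eqref{smallr} gives $|r_{+}r_{-}|=|r_{1}r_{2}|\le c_{0}^{2}<1$ pointwise. Consequently $1+r_{+}r_{-}$ lies in the disc $\{|w-1|\le c_{0}^{2}\}$, which is bounded away from $0$ and from the branch cut of the principal logarithm; hence $\kappa$ is well defined, $|\kappa|\le C|r_{+}r_{-}|$, and $|(1+r_{+}r_{-})^{-1}|\le(1-c_{0}^{2})^{-1}$. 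Using $r_{\pm}\in H_{z}^{1}(\mathbb{R})\cap L_{z}^{2,1}(\mathbb{R})\hookrightarrow L_{z}^{\infty}(\mathbb{R})$ together with the Banach algebra property of $H_{z}^{1}(\mathbb{R})$, the bounds $|\kappa|\le C|r_{+}r_{-}|$ and $\partial_{z}\kappa=(1+r_{+}r_{-})^{-1}(r_{+}'r_{-}+r_{+}r_{-}')$ then yield $\kappa\in H_{z}^{1}(\mathbb{R})\cap L_{z}^{2,1}(\mathbb{R})$; in particular $\kappa\in L_{z}^{2}(\mathbb{R})$, which is the claim deferred from the construction of $\delta(z)$.

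The crux is to show that the exponential factors in \eqref{rdelta} are bounded perturbations of $1$ lying in $H_{z}^{1}(\mathbb{R})$. Here I would avoid estimating $\delta_{\pm}$ individually, since the mere $L_{z}^{2}$-boundedness of $\mathcal{C}^{\pm}$ in Proposition \ref{p4} does not control $e^{\mathcal{C}^{\pm}\kappa}$ pointwise. Instead I use the identity $\mathcal{C}^{+}+\mathcal{C}^{-}=-i\mathcal{H}$ to write $\delta_{+}\delta_{-}=e^{-i\mathcal{H}\kappa}$ and $\delta_{+}^{-1}\delta_{-}^{-1}=e^{i\mathcal{H}\kappa}$, so that $r_{\delta,+}=e^{-i\mathcal{H}\kappa}r_{+}$ and $r_{\delta,-}=e^{i\mathcal{H}\kappa}r_{-}$. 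Since $\mathcal{H}$ is bounded on $L_{z}^{2}(\mathbb{R})$ and commutes with $\partial_{z}$, it maps $H_{z}^{1}(\mathbb{R})$ into itself; combined with the Sobolev embedding $H_{z}^{1}(\mathbb{R})\hookrightarrow L_{z}^{\infty}(\mathbb{R})$ this gives $\mathcal{H}\kappa\in H_{z}^{1}(\mathbb{R})\subset L_{z}^{\infty}(\mathbb{R})$. Writing $\psi:=\pm i\mathcal{H}\kappa\in H_{z}^{1}(\mathbb{R})$, the elementary bounds $|e^{\psi}-1|\le e^{\|\psi\|_{L^{\infty}}}|\psi|$ and $\partial_{z}(e^{\psi}-1)=\psi'e^{\psi}$ then show $e^{\pm i\mathcal{H}\kappa}-1\in H_{z}^{1}(\mathbb{R})\subset L_{z}^{\infty}(\mathbb{R})$.

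Finally I would split $r_{\delta,+}=r_{+}+(e^{-i\mathcal{H}\kappa}-1)r_{+}$ and $r_{\delta,-}=r_{-}+(e^{i\mathcal{H}\kappa}-1)r_{-}$. The first summand lies in $H_{z}^{1}(\mathbb{R})\cap L_{z}^{2,1}(\mathbb{R})$ by hypothesis. For the second summand, membership in $H_{z}^{1}(\mathbb{R})$ follows from the Banach algebra property because both factors lie in $H_{z}^{1}(\mathbb{R})$, while membership in $L_{z}^{2,1}(\mathbb{R})$ follows because $e^{\pm i\mathcal{H}\kappa}-1\in L_{z}^{\infty}(\mathbb{R})$ and $r_{\pm}\in L_{z}^{2,1}(\mathbb{R})$, so that $\langle z\rangle(e^{\pm i\mathcal{H}\kappa}-1)r_{\pm}\in L_{z}^{2}(\mathbb{R})$. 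This yields $r_{\delta,\pm}\in H_{z}^{1}(\mathbb{R})\cap L_{z}^{2,1}(\mathbb{R})$.

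I expect the main obstacle to be precisely this pointwise control of the exponential factors: a direct attack through $\delta_{\pm}=e^{\mathcal{C}^{\pm}\kappa}$ stalls because $\mathcal{C}^{\pm}$ is only bounded on $L_{z}^{2}(\mathbb{R})$ and need not land in $L_{z}^{\infty}(\mathbb{R})$. The resolution is the regularity upgrade $\kappa\in H_{z}^{1}(\mathbb{R})\Rightarrow\mathcal{H}\kappa\in H_{z}^{1}(\mathbb{R})\hookrightarrow L_{z}^{\infty}(\mathbb{R})$, which rests on $\mathcal{H}$ commuting with differentiation; once this is in hand the remaining steps are routine product estimates in $H_{z}^{1}(\mathbb{R})\cap L_{z}^{2,1}(\mathbb{R})$.
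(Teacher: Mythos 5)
Your proof is correct, and at the decisive step it takes a genuinely different, somewhat leaner route than the paper. The opening step coincides: both you and the paper reduce to $\kappa:=\log(1+r_{+}r_{-})$, use $r_{+}r_{-}=-r_{1}r_{2}$ and \eqref{smallr} to keep $1+r_{+}r_{-}$ in a disc where the logarithm is analytic, and conclude $\kappa\in H_{z}^{1}(\mathbb{R})$. The difference lies in how the exponential factors in \eqref{rdelta} are tamed. The paper first proves the stronger auxiliary fact that $\delta_{\pm}=e^{\mathcal{C}^{\pm}\kappa}$ are \emph{individually} in $L_{z}^{\infty}(\mathbb{R})$, by a Fourier computation: via the residue theorem and Jordan's lemma it identifies $\mathcal{C}^{+}\kappa$ with the positive-frequency part $\int_{0}^{\infty}\widehat{\kappa}(\lambda)e^{i\lambda z}\,d\lambda$, bounded by $\|\widehat{\kappa}\|_{L^{1}}\leq\frac{1}{\sqrt{2}}\|\kappa\|_{H_{z}^{1}}$; only afterwards does it invoke $\delta_{+}\delta_{-}=e^{-i\mathcal{H}\kappa}$ and $\partial_{z}\mathcal{H}=\mathcal{H}\partial_{z}$ for the $H^{1}$ bound. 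You bypass the individual bounds entirely, observing that \eqref{rdelta} only involves the products $\delta_{+}\delta_{-}=e^{-i\mathcal{H}\kappa}$ and $\delta_{+}^{-1}\delta_{-}^{-1}=e^{i\mathcal{H}\kappa}$, and you obtain $L^{\infty}$ control from the regularity upgrade $\mathcal{H}\kappa\in H_{z}^{1}\hookrightarrow L_{z}^{\infty}$; the splitting $r_{\delta,\pm}=r_{\pm}+(e^{\mp i\mathcal{H}\kappa}-1)r_{\pm}$ together with the Banach algebra property of $H^{1}$ then settles both memberships. Your route stays entirely within Proposition \ref{p4} and avoids the residue/Jordan computation; moreover, by property (ii) there, $\mathcal{C}^{\pm}\kappa=\pm\frac{1}{2}\kappa-\frac{i}{2}\mathcal{H}\kappa\in H_{z}^{1}\subset L_{z}^{\infty}$, so your argument in fact recovers the paper's auxiliary fact $\delta_{\pm}\in L_{z}^{\infty}(\mathbb{R})$ in one extra line --- worth recording explicitly, since the paper reuses that fact after \eqref{4.69+}. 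One small correction to your closing diagnosis: the ``direct attack'' through $\delta_{\pm}$ does not actually stall --- the paper carries it out --- because $H^{1}$ regularity yields $\widehat{\kappa}\in L^{1}$ and hence a pointwise bound on $\mathcal{C}^{\pm}\kappa$; what is true, exactly as you say, is that $L^{2}$-boundedness of $\mathcal{C}^{\pm}$ alone would be insufficient.
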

\begin{proof}
We first prove that $\log(1+r_{+}(z)r_{-}(z))\in H_{z}^{1}(\mathbb{R})$. Let $\omega=r_{+}(z)r_{-}(z)$, then $\omega\in\mathbb{C}$ and $|\omega|=|r_{1}r_{2}|\leq c_{0}^{2}<1$. Since $\text{Log} \omega$ can define a single-valued branch satisfying $\log 1=0$ in $\omega\in\mathbb{C}\setminus(-\infty,0]$, so $f(\omega)=\log(1+\omega)$ is analytic in $\omega\in\mathbb{C}\setminus(-\infty,-1]$ and $f'(\omega)$ is bounded on the disc $B(0,c_{0}^{2})$. It follows from the complex mean value theorem
that $\forall z_{1}, z_{2}\in\mathbb{R}$, $\omega_{1}=r_{+}(z_{1})r_{-}(z_{1})$, $\omega_{2}=r_{+}(z_{2})r_{-}(z_{2})$, there exists two points $\xi_{1},\xi_{2}$ on the segment of the line joining $\omega_{1}, \omega_{2}$ such that
\begin{equation*}
|f(\omega_{1})-f(\omega_{2}))|\leq 2\sup_{\omega\in B(0,c_{0}^{2})}|f'(\omega)||\omega_{1}-\omega_{2}|.
\end{equation*}
Because $r_{+}(0)r_{-}(0)=-r_{1}(0)r_{2}(0)=0$, it follows that
\begin{equation*}
|\log(1+r_{+}(z)r_{-}(z))|\leq C|r_{+}(z)r_{-}(z)|,
\end{equation*}
where $C$ is a constant associated with $c_{0}^{2}$.
Therefore, we have $\log(1+r_{+}r_{-})\in L_{z}^{2}(\mathbb{R})$. It is easy to prove $\partial_{z}\log(1+r_{+}r_{-})\in L_{z}^{2}(\mathbb{R})$, so $\log(1+r_{+}r_{-})\in H_{z}^{1}(\mathbb{R})$.

Next we prove that $\delta_{\pm}(z)\in L_{z}^{\infty}(\mathbb{R})$. Based on Fourier theory, we rewrite the following integral equation
\begin{align*}
\mathcal{C}^{+}(\log(1+r_{+}r_{-}))(z)=&\lim_{\varepsilon\rightarrow 0}\frac{1}{2\pi i}\int_{\mathbb{R}}\frac{\log(1+r_{+}(s)r_{-}(s))}{s-(z+\varepsilon i)}ds\\
=&\lim_{\varepsilon\rightarrow 0}\frac{1}{2\pi i}\int_{\mathbb{R}}\int_{\mathbb{R}}\widehat{\log(1+r_{+}r_{-})}(\lambda)e^{i\lambda s}d\lambda\frac{1}{s-(z+\varepsilon i)}ds\\
=&\lim_{\varepsilon\rightarrow  0}\int_{\mathbb{R}}\widehat{\log(1+r_{+}r_{-})}(\lambda)\frac{1}{2\pi i}\int_{\mathbb{R}}\frac{e^{i\lambda s}}{s-(z+\varepsilon i)}dsd\lambda.
\end{align*}
Notice that the integrand function of the integral
\begin{align*}
\frac{1}{2\pi i}\int_{\mathbb{R}}\frac{e^{i\lambda s}}{s-(z+\varepsilon i)}ds
\end{align*}
has first order pole $z+i\varepsilon\in\mathbb{C}^{+}$. When $\lambda>0$, we make a sufficiently large semicircle $C_{R}$ of radius $R$ centered at zero in $\mathbb{C}^{+}$
such that $z+i\varepsilon$ is included in the semicircular disk $D_{R}$. By using the residue theorem, we have
\begin{align*}
\frac{1}{2\pi i}\int_{-R}^{R}\frac{e^{i\lambda s}}{s-(z+\varepsilon i)}ds+\frac{1}{2\pi i}\int_{C_{R}}\frac{e^{i\lambda s}}{s-(z+\varepsilon i)}ds=e^{i\lambda(z+i\varepsilon)},\ \ \lambda>0.
\end{align*}
Let $R\rightarrow\infty$, we have
\begin{align*}
\frac{1}{2\pi i}\int_{\mathbb{R}}\frac{e^{i\lambda s}}{s-(z+\varepsilon i)}ds=e^{i\lambda(z+i\varepsilon)},\ \ \lambda>0
\end{align*}
by Jordan theorem. Similarly, when $\lambda<0$, we have
\begin{align*}
\frac{1}{2\pi i}\int_{\mathbb{R}}\frac{e^{i\lambda s}}{s-(z+\varepsilon i)}ds=0,\ \ \lambda<0.
\end{align*}
Therefore, we have
\begin{align*}
&\|\mathcal{C}^{+}(\log(1+r_{+}r_{-}))(z)\|_{L_{z}^{\infty}}=\|\int_{0}^{+\infty}\widehat{\log(1+r_{+}r_{-})}(\lambda)e^{i\lambda z}d\lambda\|_{L_{z}^{\infty}}\\
\leq &\|\widehat{\log(1+r_{+}r_{-})}\|_{L^{1}}
\leq\sqrt{\pi}\|\widehat{\log(1+r_{+}r_{-})}\|_{L^{2,1}}
=\frac{1}{\sqrt{2}}\|\log(1+r_{+}r_{-})\|_{H_{z}^{1}},
\end{align*}
which means that
\begin{equation*}
\delta_{+}(z)=e^{\mathcal{C}^{+}(\log(1+r_{+}r_{-}))(z)}\in L_{z}^{\infty}(\mathbb{R}).
\end{equation*}
A similar proof can be given for $\delta_{-}(z)\in L_{z}^{\infty}(\mathbb{R})$.

Finally, we prove that the conclusion $r_{\delta,\pm}(z)\in H_{z}^{1}(\mathbb{R})\cap L_{z}^{2,1}(\mathbb{R})$.
We prove the statement for $r_{\delta,+}(z)$ only. The proof for $r_{\delta,-}(z)$ is analogous. From the definition \eqref{rdelta} of $r_{\delta,+}(z)$ and the conclusions $\delta_{\pm}(z)\in L_{z}^{\infty}(\mathbb{R})$ and $r_{+}(z)\in L_{z}^{2,1}(\mathbb{R})$, it immediately follows that $r_{\delta,+}(z)\in L_{z}^{2,1}(\mathbb{R})$.
By the property (iii) of the Cauchy projection operator in the Proposition \ref{p4} we get
\begin{equation*}
\delta_{+}(z)\delta_{-}(z)=e^{-i\mathcal{H}(\log(1+r_{+}r_{-}))(z)}.
\end{equation*}
By the properties of the Hilbert transform $\mathcal{H}$, we have
\begin{align*}
\|\partial_{z}\mathcal{H}(\log(1+r_{+}r_{-}))\|_{L_{z}^{2}}=\|\mathcal{H}(\partial_{z}\log(1+r_{+}r_{-}))\|_{L_{z}^{2}}\leq\|\partial_{z}\log(1+r_{+}r_{-})\|_{L_{z}^{2}},
\end{align*}
and therefore $r_{\delta,+}(z)\in H_{z}^{1}(\mathbb{R})$.

\end{proof}
From the Proposition \ref{p9} it follows that the unique solution to RH problem \ref{RH6} can be represented by the Cauchy operator
\begin{equation}
M_{\delta}(x,z)=I+\mathcal{C}(M_{\delta,-}(x,\cdot)R_{\delta}(x,\cdot))(z),\ \ z\in\mathbb{C}\setminus \mathbb{R},
\end{equation}
which projections on the $\mathbb{R}$ are
\begin{equation}\label{4.27+}
M_{\delta,\pm}(x,z)=I+\mathcal{C}^{\pm}(M_{\delta,-}(x,\cdot)R_{\delta}(x,\cdot))(z),\ \ z\in\mathbb{R}.
\end{equation}
Similarly, we denote
\begin{equation*}
M_{\delta,\pm}(x,z):=[m_{\delta,\pm}(x,z),n_{\delta,\pm}(x,z)],
\end{equation*}
then $M_{\delta,-}(x,z)R_{\delta}(x,z)$ can be rewritten as
\begin{equation}\label{MR+}
M_{\delta,-}(x,z)R_{\delta}(x,z)=[n_{\delta,-}(x,z)r_{\delta,-}(z)e^{2izx},m_{\delta,+}(x,z)r_{\delta,+}(z)e^{-2izx}].
\end{equation}
We have
\begin{align}
\lim_{|z|\rightarrow\infty}zn_{\delta,+}^{(1)}(x,z)=&-\frac{1}{4}u(x)e^{i\int^{+\infty}_{x}u(y)v(y)dy},\label{4.70+}\\
\lim_{|z|\rightarrow\infty}zm_{\delta,-}^{(2)}(x,z)=&-\frac{1}{2i}e^{\frac{1}{2i}\int^{+\infty}_{x}u(y)v(y)dy}\partial_{x}[v(x)e^{\frac{1}{2i}\int^{+\infty}_{x}u(y)v(y)dy}],\label{4.69+}
\end{align}
from \eqref{4.70} and \eqref{4.69}, where \eqref{4.84+} is used. Since $\delta_{\pm}(z)\in L_{z}^{\infty}(\mathbb{R})$ and $r_{\delta,\pm}(z)\in H_{z}^{1}(\mathbb{R})\cap L_{z}^{2,1}(\mathbb{R})$, we know that $\forall x\in \mathbb{R}, R_{\delta}(x,z)\in L_{z}^{1}(\mathbb{R})\cap L_{z}^{2}(\mathbb{R})$ and $M_{\delta, -}(x,z)R_{\delta}(x,z)\in L_{z}^{1}(\mathbb{R})$. Then the formulas \eqref{4.70+} and \eqref{4.69+} can be rewritten as
\begin{align}
u(x)e^{i\int^{+\infty}_{x}u(y)v(y)dy}&=\frac{2}{\pi i}\int_{\mathbb{R}}m_{\delta,+}^{(1)}(x,z)r_{\delta,+}(z)e^{-2izx}dz,\label{4.91}\\
e^{\frac{1}{2i}\int^{+\infty}_{x}u(y)v(y)dy}\partial_{x}[v(x)e^{\frac{1}{2i}\int^{+\infty}_{x}u(y)v(y)dy}]&=\frac{1}{\pi}\int_{\mathbb{R}}n_{\delta,-}^{(2)}(x,z)r_{\delta,-}(z)e^{2izx}dz.\label{4.90}
\end{align}
according to the character \eqref{4.5} for Cauchy operator $\mathcal{C}$ in Proposition \ref{p4}.

\subsection{Estimates of the potential}
We shall estimate the potential $u(x)$ with the help of two sets of reformulation formulas \eqref{4.72}, \eqref{4.71} and \eqref{4.91}, \eqref{4.90}. This is preceded by some lemmas and corollaries.
\begin{lemma}\label{p7}
If $r_{\pm}(z)\in H^{1}_{z}(\mathbb{R})$, then
\begin{align}
&\sup_{x\in\mathbb{R}}\|\mathcal{C}^{\pm}(r_{+}(z)e^{-2izx})\|_{L^{\infty}_{z}}\leq\frac{1}{\sqrt{2}}\|r_{+}\|_{H_{z}^{1}},\label{4.40}\\
&\sup_{x\in\mathbb{R}}\|\mathcal{C}^{\pm}(r_{-}(z)e^{2izx})\|_{L^{\infty}_{z}}\leq\frac{1}{\sqrt{2}}\|r_{-}\|_{H_{z}^{1}},\label{4.41}
\end{align}
and
\begin{align}
&\sup_{x\in\mathbb{R}^{+}}\|\langle x\rangle\mathcal{C}^{+}(r_{+}(z)e^{-2izx})\|_{L^{2}_{z}}\leq\|r_{+}\|_{H_{z}^{1}},\label{4.38}\\
&\sup_{x\in\mathbb{R}^{+}}\|\langle x\rangle\mathcal{C}^{-}(r_{-}(z)e^{-2izx})\|_{L^{2}_{z}}\leq\|r_{-}\|_{H_{z}^{1}}.\label{4.39}
\end{align}
\end{lemma}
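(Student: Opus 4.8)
The plan is to reduce all four inequalities to the explicit Fourier representation of the Cauchy projections that was already established inside the proof of Proposition~\ref{p9}. With the convention $\widehat f(\lambda)=\frac{1}{2\pi}\int_{\mathbb R}f(\xi)e^{-i\lambda\xi}\,d\xi$, the residue and Jordan-lemma computation there gives
\[
\mathcal C^{+}(f)(z)=\int_{0}^{\infty}\widehat f(\lambda)e^{i\lambda z}\,d\lambda,\qquad
\mathcal C^{-}(f)(z)=-\int_{-\infty}^{0}\widehat f(\lambda)e^{i\lambda z}\,d\lambda,
\]
the second identity following from the first via $\mathcal C^{+}-\mathcal C^{-}=I$. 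Beyond these, the only inputs I need are the Plancherel relations $\|f\|_{L^{2}}^{2}=2\pi\|\widehat f\|_{L^{2}}^{2}$ and $\|f\|_{H^{1}}=\sqrt{2\pi}\,\|\widehat f\|_{L^{2,1}}$ recorded just before the proposition, together with the fact that modulation by $e^{\mp 2izx}$ merely translates the transform, $\widehat{r_{\pm}(\cdot)e^{\mp 2i(\cdot)x}}(\lambda)=\widehat{r_{\pm}}(\lambda\pm 2x)$.

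For the two $L^{\infty}$ bounds \eqref{4.40}--\eqref{4.41} I would simply take absolute values in the half-line representation. For instance
\[
\bigl|\mathcal C^{\pm}(r_{+}e^{-2izx})(z)\bigr|\le\int_{\mathbb R}\bigl|\widehat{r_{+}}(\lambda+2x)\bigr|\,d\lambda=\|\widehat{r_{+}}\|_{L^{1}},
\]
uniformly in $x$ and $z$, and a Cauchy--Schwarz split $\|\widehat{r_{+}}\|_{L^{1}}\le\|\langle\cdot\rangle^{-1}\|_{L^{2}}\|\langle\cdot\rangle\widehat{r_{+}}\|_{L^{2}}=\sqrt{\pi}\,\|\widehat{r_{+}}\|_{L^{2,1}}=\tfrac{1}{\sqrt 2}\|r_{+}\|_{H^{1}}$ gives \eqref{4.40}. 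Estimate \eqref{4.41} is identical after replacing $r_{+}$ by $r_{-}$ and using the opposite shift. This part is routine and parallels the bound $\delta_{\pm}\in L^{\infty}_{z}$ already carried out in Proposition~\ref{p9}.

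The weighted $L^{2}$ bounds \eqref{4.38}--\eqref{4.39} are the substantive part, and I expect the weight-transfer step to be the only delicate point. Applying Plancherel to the half-line representation discards exactly the portion of the transform that the projection kills; for \eqref{4.38},
\[
\|\mathcal C^{+}(r_{+}e^{-2izx})\|_{L^{2}_{z}}^{2}=2\pi\int_{0}^{\infty}\bigl|\widehat{r_{+}}(\lambda+2x)\bigr|^{2}\,d\lambda=2\pi\int_{2x}^{\infty}\bigl|\widehat{r_{+}}(\mu)\bigr|^{2}\,d\mu .
\]
Here the restriction $x\in\mathbb R^{+}$ is essential: for $x>0$ every surviving frequency satisfies $\mu\ge 2x>0$, hence $\langle x\rangle^{2}=1+x^{2}\le 1+\tfrac{\mu^{2}}{4}\le\langle\mu\rangle^{2}$, and pushing the constant weight inside the integral yields
\[
\langle x\rangle^{2}\,\|\mathcal C^{+}(r_{+}e^{-2izx})\|_{L^{2}_{z}}^{2}\le 2\pi\int_{2x}^{\infty}\langle\mu\rangle^{2}\bigl|\widehat{r_{+}}(\mu)\bigr|^{2}\,d\mu\le 2\pi\|\widehat{r_{+}}\|_{L^{2,1}}^{2}=\|r_{+}\|_{H^{1}}^{2},
\]
which is \eqref{4.38}. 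The companion estimate \eqref{4.39} follows by the same template with $r_{-}$ and the matching projection/modulation pair, which again confines the Fourier mass to the half-line $\{\,|\mu|\ge 2x\,\}$. The crux, and the reason the statement is limited to $x\in\mathbb R^{+}$, is precisely that the Cauchy projection truncates the transform to the half-line on which the surviving frequency obeys $|\mu|\ge 2x$; only there does positivity of $x$ upgrade $|\mu|\ge 2x$ into $\langle x\rangle\le\langle\mu\rangle$. For $x<0$, or for a mismatched pairing of projection with exponent, the surviving frequencies would accumulate near $\mu=0$ and no gain in the $\langle x\rangle$ weight would be available.
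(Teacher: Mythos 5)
Your proof is correct and essentially reproduces the paper's argument: the same residue/Jordan half-line Fourier representation already derived in the proof of Proposition~\ref{p9}, the same chain $\|\widehat{r}_{+}\|_{L^{1}}\le\sqrt{\pi}\,\|\widehat{r}_{+}\|_{L^{2,1}}=\frac{1}{\sqrt{2}}\|r_{+}\|_{H^{1}_{z}}$ for \eqref{4.40}--\eqref{4.41}, while for \eqref{4.38}--\eqref{4.39} you simply inline, via Plancherel and the weight transfer $\langle x\rangle\le\langle\mu\rangle$ on $\{\,|\mu|\ge 2x\,\}$, exactly the estimate the paper delegates to Proposition 1 of \cite{Pel}. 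Your closing remark about the matching projection/modulation pair is also well taken: as printed, \eqref{4.39} pairs $\mathcal{C}^{-}$ with $e^{-2izx}$, for which $\|\mathcal{C}^{-}(r_{-}e^{-2izx})\|_{L^{2}_{z}}^{2}=2\pi\int_{-\infty}^{2x}|\widehat{r}_{-}(\mu)|^{2}\,d\mu\rightarrow\|r_{-}\|_{L^{2}}^{2}$ as $x\rightarrow+\infty$, so no $\langle x\rangle$ gain is possible and the exponent should read $e^{2izx}$ (the version actually invoked at \eqref{4.59} to obtain \eqref{4.53}), which is precisely the corrected statement your argument establishes.
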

\begin{proof}
We only give a detailed proof for $\mathcal{C}^{+}(r_{+}(z)e^{-2izx})$. The proofs for the remaining cases are similar.
Using the same method as for Proposition \ref{p9}, we can rewrite $\mathcal{C}^{+}(r_{+}(z)e^{-2izx})$ as
\begin{align*}
\begin{aligned}
\mathcal{C}^{+}\left(r_{+}(z)e^{-2izx}\right)(z)&=\lim_{\varepsilon\rightarrow0}\frac{1}{2\pi i}\int_{\mathbb{R}}\frac{r_{+}(s)e^{-2isx}}{s-(z+i\varepsilon)}ds \\
&=\lim_{\varepsilon\rightarrow0}\frac{1}{2\pi i}\int_{\mathbb{R}}\widehat{r}_{+}(\lambda)\int_{\mathbb{R}}\frac{e^{i(\lambda-2 x) s}}{s-(z+i \varepsilon)} ds d\lambda \\
&=\int_{2x}^{\infty}\widehat{r}_{+}(\lambda) e^{i(\lambda-2x)z}d\lambda
\end{aligned}
\end{align*}
according to the residue theorem and Jordan theorem.
Hence, we have
\begin{align*}
\sup_{x\in\mathbb{R}}\left\|\mathcal{C}^{+}\left(r_{+}(z) \mathrm{e}^{-2izx}\right)\right\|_{L_z^{\infty}} \leq\left\|\widehat{r}_{+}\right\|_{L^1} \leq\sqrt{\pi}\left\|\widehat{r}_{+}\right\|_{L^{2,1}} = \frac{1}{\sqrt{2}}\left\|r_{+}\right\|_{H_{z}^1}.
\end{align*}
Since $\widehat{r}_{+}\in L^{2,1}(\mathbb{R})$, we have
\begin{align*}
\sup_{x\in\mathbb{R}^{+}}\|\langle x\rangle\mathcal{C}^{+}(r_{+}(z)e^{-2izx})\|_{L^{2}_{z}}=&\sup_{x \in\mathbb{R}^{+}}\left\|\langle x\rangle \int_{2x}^{\infty}\widehat{r}_{+}(\lambda) \mathrm{e}^{i(\lambda-2x)z}d\lambda\right\|_{L_z^2}\\
\leq& \sqrt{2\pi}\left\|\widehat{r}_{+}\right\|_{L^{2,1}}=\left\|r_{+}\right\|_{H^1}
\end{align*}
by Proposition 1 in reference \cite{Pel}.
\end{proof}
\begin{corollary}\label{p7+}
If $r_{\delta,\pm}(z)\in H^{1}_{z}(\mathbb{R})$, then
\begin{align}
&\sup_{x\in\mathbb{R}}\|\mathcal{C}^{\pm}(r_{\delta,+}(z)e^{-2izx})\|_{L^{\infty}_{z}}\leq\frac{1}{\sqrt{2}}\|r_{\delta,+}\|_{H_{z}^{1}},\label{4.40+}\\
&\sup_{x\in\mathbb{R}}\|\mathcal{C}^{\pm}(r_{\delta,-}(z)e^{2izx})\|_{L^{\infty}_{z}}\leq\frac{1}{\sqrt{2}}\|r_{\delta,-}\|_{H_{z}^{1}},\label{4.41+}
\end{align}
and
\begin{align}
&\sup_{x\in\mathbb{R}^{-}}\|\langle x\rangle\mathcal{C}^{-}(r_{\delta,+}(z)e^{-2izx})\|_{L^{2}_{z}}\leq\|r_{\delta,+}\|_{H_{z}^{1}},\label{4.38+}\\
&\sup_{x\in\mathbb{R}^{-}}\|\langle x\rangle\mathcal{C}^{+}(r_{\delta,-}(z)e^{2izx})\|_{L^{2}_{z}}\leq\|r_{\delta,-}\|_{H_{z}^{1}}.\label{4.39+}
\end{align}
\end{corollary}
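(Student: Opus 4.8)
The plan is to run the Fourier-analytic argument of Lemma~\ref{p7} essentially verbatim with $r_{\pm}$ replaced by $r_{\delta,\pm}$, which is legitimate because Proposition~\ref{p9} already supplies $r_{\delta,\pm}(z)\in H_{z}^{1}(\mathbb{R})$, and hence $\widehat{r}_{\delta,\pm}\in L^{2,1}(\mathbb{R})$. The only genuine differences from \eqref{4.38}--\eqref{4.39} are that the weighted $L^{2}$ estimates \eqref{4.38+}--\eqref{4.39+} are taken over the left half-line $\mathbb{R}^{-}$ and with the \emph{opposite} Cauchy projector, so the one thing I must redo is the corresponding Fourier representation.

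First I would dispatch the $L^{\infty}$ bounds \eqref{4.40+} and \eqref{4.41+}. These take the supremum over all of $\mathbb{R}$ and treat $\mathcal{C}^{+}$ and $\mathcal{C}^{-}$ on an equal footing, so they follow word for word from the proof of \eqref{4.40}--\eqref{4.41}: expanding $r_{\delta,+}(s)=\int_{\mathbb{R}}\widehat{r}_{\delta,+}(\lambda)e^{i\lambda s}\,d\lambda$ and evaluating the inner $s$-integral by the residue theorem and Jordan's lemma gives $\mathcal{C}^{+}(r_{\delta,+}(z)e^{-2izx})(z)=\int_{2x}^{\infty}\widehat{r}_{\delta,+}(\lambda)e^{i(\lambda-2x)z}\,d\lambda$, whence $\sup_{x}\|\mathcal{C}^{+}(r_{\delta,+}e^{-2izx})\|_{L^{\infty}_{z}}\le\|\widehat{r}_{\delta,+}\|_{L^{1}}\le\sqrt{\pi}\|\widehat{r}_{\delta,+}\|_{L^{2,1}}=\tfrac{1}{\sqrt{2}}\|r_{\delta,+}\|_{H^{1}_{z}}$; the remaining sign and projector combinations are identical.

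The substantive step is the weighted $L^{2}$ bound \eqref{4.38+}, where I need the $\mathcal{C}^{-}$ version of the representation. Now the Cauchy kernel has its pole at $z-i\varepsilon\in\mathbb{C}^{-}$, so closing the $s$-contour in the upper half-plane when $\lambda-2x>0$ encloses nothing, while for $\lambda-2x<0$ one closes in the lower half-plane and collects the residue with reversed orientation; this produces $\mathcal{C}^{-}(r_{\delta,+}(z)e^{-2izx})(z)=-\int_{-\infty}^{2x}\widehat{r}_{\delta,+}(\lambda)e^{i(\lambda-2x)z}\,d\lambda$. For $x\in\mathbb{R}^{-}$ we have $2x\le0$, so Plancherel in $z$ bounds the $L^{2}_{z}$-norm by a constant times $\|\widehat{r}_{\delta,+}\mathbf{1}_{(-\infty,2x)}\|_{L^{2}_{\lambda}}$, a left tail that recedes as $x\to-\infty$; the weight $\langle x\rangle$ is then absorbed by the reflected form ($x\mapsto-x$) of Proposition~1 in \cite{Pel}, giving \eqref{4.38+}. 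Estimate \eqref{4.39+} is the mirror image: for $r_{\delta,-}e^{2izx}$ the $\mathcal{C}^{+}$ projector yields $\int_{-2x}^{\infty}\widehat{r}_{\delta,-}(\lambda)e^{i(\lambda+2x)z}\,d\lambda$, a right tail over $(-2x,\infty)$ that likewise recedes for $x\in\mathbb{R}^{-}$, and the same weighted estimate applies.

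The only real obstacle is bookkeeping: keeping straight the sign of the residue and the direction of the truncation interval---$(-\infty,2x)$ versus $(-2x,\infty)$---as the projector and the half-line are switched, and checking that the weighted tail estimate cited from \cite{Pel} transfers to $\mathbb{R}^{-}$ under the reflection $x\mapsto-x$. Once these Fourier representations are in hand, the estimates reduce to Plancherel together with the $L^{2,1}$-control on $\widehat{r}_{\delta,\pm}$ furnished by Proposition~\ref{p9}.
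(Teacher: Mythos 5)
Your proposal is correct and is exactly the argument the paper intends: the corollary is stated without proof as the mirror image of Lemma~\ref{p7}, and you supply precisely the needed modifications --- the $\mathcal{C}^{-}$ (resp.\ $\mathcal{C}^{+}$) residue computation giving the truncation $(-\infty,2x)$ (resp.\ $(-2x,\infty)$), which for $x\in\mathbb{R}^{-}$ satisfies $\langle x\rangle\lesssim\langle\lambda\rangle$ on the tail so that the reflected form of Proposition~1 in \cite{Pel} applies, with Plancherel and the $L^{2,1}$ control of $\widehat{r}_{\delta,\pm}$ from Proposition~\ref{p9} closing the estimates. Your Fourier representations and signs check out, so nothing further is needed.
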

\begin{lemma}\label{l10}
If $r_{\pm}(z)\in H^{1}_{z}(\mathbb{R})$ and $r_{j}(k), j=1,2$ satisfy the condition \eqref{smallr}, then there exists a positive constant $C$ such that
\begin{align}
&\sup_{x\in\mathbb{R}^{+}}\|\langle x\rangle m_{-}^{(2)}(x,z)\|_{L_z^{2}}\leq C\|r_{-}\|_{H_{z}^{1}},\label{4.53}\\
&\sup_{x\in\mathbb{R}^{+}}\|\langle x\rangle n_{+}^{(1)}(x,z)\|_{L_z^{2}}\leq C\|r_{+}\|_{H_{z}^{1}}.\label{4.54}
\end{align}
If in addition, $r_{\pm}(z)\in L_{z}^{2,1}(\mathbb{R})$, then there exists another positive constant $C$ such that
\begin{align}
&\sup_{x\in\mathbb{R}}\|\partial_{x}m_{-}^{(2)}(x,z)\|_{L_z^{2}}\leq C(\|r_{+}\|_{H_{z}^{1}\cap L_{z}^{2,1}}+\|r_{-}\|_{H_{z}^{1}\cap L_{z}^{2,1}}),\label{4.55}\\
&\sup_{x\in\mathbb{R}}\|\partial_{x}n_{+}^{(1)}(x,z)\|_{L_z^{2}}\leq C(\|r_{+}\|_{H_{z}^{1}\cap L_{z}^{2,1}}+\|r_{-}\|_{H_{z}^{1}\cap L_{z}^{2,1}}),\label{4.56}
\end{align}
where the constant $C$ depends on $\|r_{\pm}\|_{H_{z}^{1}\cap L_{z}^{2,1}}$.
\end{lemma}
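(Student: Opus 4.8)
The plan is to convert the four quantities in \eqref{4.53}--\eqref{4.56} into solutions of a closed singular-integral system and then push the $\langle x\rangle$-weight and the $x$-derivative through the resolvent of that system. First I would combine the component equations \eqref{4.23}, \eqref{4.26}, \eqref{4.31}, \eqref{4.32} with the factorization \eqref{MR} of $M_-R$ to obtain, for the pair $(m_-^{(2)},n_+^{(2)})$, the coupled equations $m_-^{(2)}=\mathcal{C}^-(n_+^{(2)}r_-e^{2izx})$ and $n_+^{(2)}-1=\mathcal{C}^+(m_-^{(2)}r_+e^{-2izx})$, together with the analogous pair for $(n_+^{(1)},m_-^{(1)})$. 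Eliminating one unknown gives $(I-T)m_-^{(2)}=\mathcal{C}^-(r_-e^{2izx})$, where $Tf:=\mathcal{C}^-(\mathcal{C}^+(f\,r_+e^{-2izx})\,r_-e^{2izx})$, and similarly for $n_+^{(1)}$. The point of using \eqref{MR} rather than the raw jump matrix is that every inhomogeneous term then carries an oscillatory factor $e^{\pm2izx}$, so that Lemma \ref{p7} becomes applicable; the non-oscillatory entry $r_+r_-$ of $R$ has been absorbed into the oscillatory columns.

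Next I would record that $I-T$ is invertible on $L_z^2(\mathbb{R})$ with a bound uniform in $x$. This is not a consequence of a naive Neumann series (the product $\|r_+\|_{L_z^\infty}\|r_-\|_{L_z^\infty}$ need not lie below $1$), but follows from the positive-definiteness estimate \eqref{4.7}: repeating the Cauchy--Goursat vanishing argument of Proposition \ref{p78} for the scalar round-trip operator $T$ shows that $I-T$ is injective and Fredholm of index zero, hence boundedly invertible, with operator norm controlled by $\eta_-^{-1}$ from \eqref{4.15} independently of $x$. Since the scalar multiplier $\langle x\rangle$ acts only in $x$, it commutes with $T$ and with $\mathcal{C}^\pm$, so applying $(I-T)^{-1}$ to $(I-T)(\langle x\rangle m_-^{(2)})=\langle x\rangle\mathcal{C}^-(r_-e^{2izx})$ and invoking the weighted bound \eqref{4.39} of Lemma \ref{p7} gives $\sup_{x\in\mathbb{R}^+}\|\langle x\rangle m_-^{(2)}\|_{L_z^2}\le C\|r_-\|_{H_z^1}$, which is \eqref{4.53}; the estimate \eqref{4.54} for $n_+^{(1)}$ follows symmetrically from \eqref{4.38}.

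For \eqref{4.55}--\eqref{4.56} I would differentiate the integral equations in $x$. The derivative falls either on a component, reproducing the operator $T$ acting on $(\partial_x m_-^{(2)},\partial_x n_+^{(2)})$, or on the exponentials $e^{\pm2izx}$, producing a factor $\pm2iz$. Thus $(I-T)\partial_x m_-^{(2)}$ equals a source built from $\mathcal{C}^-(z\,n_+^{(2)}r_-e^{2izx})$ together with a companion term coming from $\partial_x n_+^{(2)}$. Splitting $n_+^{(2)}=1+(n_+^{(2)}-1)$, the leading piece $\mathcal{C}^-(z\,r_-e^{2izx})$ is bounded in $L_z^2$ by $\|zr_-\|_{L_z^2}\le\|r_-\|_{L_z^{2,1}}$ — this is exactly where the hypothesis $r_\pm\in L_z^{2,1}(\mathbb{R})$ enters. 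Inverting $I-T$ once more with its uniform bound then yields \eqref{4.55}, and \eqref{4.56} follows identically.

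The hard part is the remaining mixed source term $\mathcal{C}^-(z\,(n_+^{(2)}-1)\,r_-e^{2izx})$: because $z(n_+^{(2)}-1)$ tends to a nonzero constant as $|z|\to\infty$ (it is governed by the reconstruction limit \eqref{4.70}), it is not in $L_z^2$, so one cannot bound the product by Cauchy--Schwarz directly. I would move the weight through the inner Cauchy projection via the identity $z\,\mathcal{C}^+(g)=\mathcal{C}^+(zg)-\tfrac{1}{2\pi i}\int_{\mathbb{R}}g$ from \eqref{4.5}; the resulting commutator constant is bounded uniformly in $x$ by $\|m_-^{(2)}\|_{L_z^2}\|r_+\|_{L_z^2}$, while the remaining $\mathcal{C}^+(z\,m_-^{(2)}r_+e^{-2izx})$ is controlled through $zr_+\in L_z^\infty$ (a consequence of $kr_\pm,\,2ikzr_+\in L_z^\infty$ in Proposition \ref{l6}) and the already-established $L_z^2$ bound on $m_-^{(2)}$. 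Tracking uniformity in $x$ throughout, and feeding in the weighted estimates \eqref{4.53}--\eqref{4.54} wherever a bound on $\langle x\rangle m_-^{(2)}$ or $\langle x\rangle n_+^{(1)}$ is required, closes \eqref{4.55}--\eqref{4.56} with constants depending on $\|r_\pm\|_{H_z^1\cap L_z^{2,1}}$, as asserted.
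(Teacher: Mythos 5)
Your reduction of the system to the scalar round--trip equation $(I-T)m_{-}^{(2)}=\mathcal{C}^{-}(r_{-}e^{2izx})$ is algebraically correct, and your weighted step (commuting $\langle x\rangle$ through $T$ and invoking \eqref{4.39}, \eqref{4.38}) as well as your derivative scheme (splitting $n_{+}^{(2)}=1+(n_{+}^{(2)}-1)$, using \eqref{4.5} to handle the non-$L^{2}$ piece, and $zr_{+}\in L_{z}^{\infty}$ via Proposition \ref{l6}) all track the paper's mechanics. But the load-bearing step --- that $I-T$ is invertible on $L_{z}^{2}(\mathbb{R})$ with an $x$-uniform bound $\eta_{-}^{-1}$ ``by repeating the Cauchy--Goursat vanishing argument of Proposition \ref{p78}'' --- has a genuine gap. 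That argument rests on the positivity \eqref{4.7} of the Hermitian part of $I+S(x,k)$, which is a $k$-plane statement made available by $|r_{j}(k)|\le c_{0}<1$ in \eqref{smallr}. Your operator $T$ lives in the $z$-plane: a homogeneous solution $(I-T)f=0$ produces (setting $h=(f,\,\mathcal{C}^{-}(fr_{+}e^{-2izx}))$) a row-vector solution of $h=\mathcal{C}^{-}(hR)$ with the jump $R(x,z)$ of \eqref{3.23}, and the quadratic form the contour argument generates is $\int h(I+R)h^{H}$, whose real part has no lower bound: by \eqref{3.17}, $r_{-}=2ikr_{1}$ and $r_{+}=-r_{2}/(2ik)$ redistribute half-power weights, so nothing in \eqref{smallr} forces $|r_{\pm}(z)|<1$ and the Hermitian part of $I+R$ need not be positive definite. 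Nor can you conjugate back to the $S$-system by multiplying $h$ by $\rho_{1}$ or $\rho_{2}$: multiplication by $2ik=2i\sqrt{z}$ (or its inverse) neither preserves $L_{z}^{2}$ for a generic row vector nor commutes with $\mathcal{C}^{\pm}$ --- the identities \eqref{tk1}--\eqref{tk2} are proved only for the special boundary functions arising from $N_{j,\pm}$. Finally, even granting injectivity, Fredholm index zero gives invertibility but no quantitative resolvent bound; \eqref{4.15} is a bound for $(I-\mathcal{C}_{S}^{-})^{-1}$, not for $(I-T)^{-1}$, so uniformity in $x$ is unproven exactly where you need it.

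The paper closes this point differently, and its detour is the missing idea: instead of eliminating to a scalar equation, it keeps the row-vector system $J-\mathcal{C}^{-}(JR)=F$ of \eqref{4.57} with $J=\tilde{M}(I-R_{+})$, multiplies on the right by $\rho_{1}(k)$ (resp.\ $\rho_{2}(k)$), and uses $R\rho_{j}=\rho_{j}S$ from \eqref{3.27-} together with \eqref{tk1}--\eqref{tk2} to arrive at $(I-\mathcal{C}_{S}^{-})(J\rho_{j})^{(R_{j})}=(F\rho_{j})^{(R_{j})}$, where the $x$-uniform bound \eqref{4.15} \emph{is} available because the positivity \eqref{4.7} holds for $S$. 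This yields the a priori estimates \eqref{4.59} and \eqref{4.62}, in which the $k$-weights are already paired so that $2ikr_{+}=-r_{2}$ and $(2ik)^{-1}r_{-}=r_{1}$ are controlled by \eqref{smallr}; the bounds \eqref{4.53}--\eqref{4.54} then follow with Lemma \ref{p7} exactly as in your weighted step, and the differentiated system is treated the same way using \eqref{4.59}, \eqref{4.62} and Proposition \ref{l6}, paralleling your commutator computation. Your proposal becomes correct if you replace the unproven uniform bound on $(I-T)^{-1}$ by this $\rho_{1},\rho_{2}$ reduction to $(I-\mathcal{C}_{S}^{-})$; as written, the scalar shortcut skips precisely the obstruction the paper's $N_{j}$/$\rho_{j}$ machinery was built to overcome.
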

\begin{proof}
We define matrix $\tilde{M}(x,z):=[m_{-}(x,z)-e_{1}, n_{+}(x,z)-e_{2}]$ according to the results \eqref{4.38} and \eqref{4.39}, and
we know that
\begin{align}\label{4.49}
[m_{-}-e_{1}, n_{+}-e_{2}]=[\mathcal{C}^{-}(n_{+}r_{-}e^{2izx}), \mathcal{C}^{+}(m_{-}r_{+}e^{-2izx})]
\end{align}
by \eqref{4.27} and \eqref{MR}. Also, starting from the expression \eqref{3.23} for $R(x,z)$, we define the matrices
\begin{align}\label{4.51}
R_{-}(x,z):=\begin{bmatrix}0&0\\r_{-}(z)e^{2izx}&0\end{bmatrix},\ \ R_{+}(x,z):=\begin{bmatrix}0&r_{+}(z)e^{-2izx}\\0&0\end{bmatrix},
\end{align}
then
\begin{equation}\label{R+R}
R_{-}(x,z)+R_{+}(x,z)=(I-R_{+}(x,z))R(x,z).
\end{equation}
By calculation we can obtain
\begin{equation}\label{4.50}
\tilde{M}-\mathcal{C}^{+}(\tilde{M}R_{+})-\mathcal{C}^{-}(\tilde{M}R_{-})=F,
\end{equation}
where
\begin{align*}
F(x,z)=\begin{bmatrix}
0&\mathcal{C}^{+}(r_{+}(z)e^{-2izx})(z)\\ \mathcal{C}^{-}(r_{-}(z)e^{2izx})(z)&0
\end{bmatrix}.
\end{align*}
The left-hand side of the equation \eqref{4.50} can be rewritten as
\begin{align*}
\tilde{M}-\mathcal{C}^{+}(\tilde{M}R_{+})-\mathcal{C}^{-}(\tilde{M}R_{-})=&\tilde{M}-\tilde{M}R_{+}-\mathcal{C}^{-}(\tilde{M}R_{-}+\tilde{M}R_{+})\\
=&\tilde{M}(I-R_{+})-\mathcal{C}^{-}(\tilde{M}(I-R_{+})R)
\end{align*}
according to $\mathcal{C}^{+}-\mathcal{C}^{-}=I$ and \eqref{R+R}. We note that $\tilde{M}(x,z)(I-R_{+}(x,z))\triangleq J(x,z)$,  then \eqref{4.50} can be rewritten as
\begin{equation}\label{4.57}
J-\mathcal{C}^{-}(JR)=F,
\end{equation}
where
\begin{align*}
J(x,z)=\begin{bmatrix}m_{-}^{(1)}-1&n_{+}^{(1)}-(m_{-}^{(1)}-1)r_{+}e^{-2izx}\\ m_{-}^{(2)}&n_{+}^{(2)}-1-m_{-}^{(2)}r_{+}e^{-2izx}.
\end{bmatrix}.
\end{align*}

We denote the superscripts $(R_{1}), (R_{2})$ represent the first and second rows of the square matrix, respectively. Multiplying the matrix $\rho_{1}(k)$ on both sides of the equation \eqref{4.57} and considering the second row of the matrices give
\begin{equation}\label{R2}
(J\rho_{1})^{(R_{2})}-(\mathcal{C}^{-}(JR)\rho_{1})^{(R_{2})}=(F\rho_{1})^{(R_{2})}.
\end{equation}
The relation \eqref{3.27-} $R\rho_{1}=\rho_{1}S$ and the equation \eqref{tk1} yield
\begin{equation*}
(\mathcal{C}^{-}(JR)\rho_{1})^{(R_{2})}=\mathcal{C}^{-}((J\rho_{1})^{(R_{2})}S),
\end{equation*}
so \eqref{R2} can be rewritten as
\begin{equation}
(I-\mathcal{C}_{S}^{-})(J\rho_{1})^{(R_2)}=(F\rho_{1})^{(R_2)}.
\end{equation}
By the \eqref{4.15}, we know that for every $x\in\mathbb{R}$, there exists a positive constant $\eta$ such that
\begin{equation*}
\|(J\rho_{1})^{(R_2)}\|_{L_{z}^2}\leq \eta\|(F\rho_{1})^{(R_2)}\|_{L_{z}^2},
\end{equation*}
which implies
\begin{equation}\label{4.59}
\|m_{-}^{(2)}\|_{L_{z}^{2}}+\|2ik(n_{+}^{(2)}-1-m_{-}^{(2)}r_{+}e^{-2izx})\|_{L_{z}^{2}}\leq\eta\|\mathcal{C}^{-}(r_{-}e^{2izx})\|_{L_{z}^2}.
\end{equation}
From \eqref{4.39} we immediately get \eqref{4.53}, and in addition, from \eqref{smallr} we have
\begin{align}\label{4.61}
\|2ik(n_{+}^{(1)}-1)\|_{L_{z}^2}\leq (\eta+1)\|\mathcal{C}^{-}(r_{-}e^{2izx})\|_{L_z^2}.
\end{align}

Similarly, by multiplying the matrix $\rho_{2}(k)$ on both sides of the equation \eqref{4.57} and considering the first row of the matrices, we can obtain for every $x\in\mathbb{R}$, there exists
a positive constant $\eta$ such that
\begin{equation}\label{4.62}
\|\frac{1}{2ik}(m_{-}^{(1)}(x,z)-1)\|_{L_{z}^{2}}+\|n_{+}^{(1)}-(m_{-}^{(1)}-1)r_{+}e^{-2izx}\|_{L_{z}^{2}}\leq\eta \|\mathcal{C}^{+}(r_{+}e^{-2izx})\|_{L_{z}^{2}}.
\end{equation}
Further, from \eqref{smallr} we have
\begin{align*}
\|n_{+}^{(1)}\|_{L^2_{z}}&\leq\|\frac{1}{2ik}(m_{-}^{(1)}-1)2ikr_{+}e^{-2izx}\|_{L_z^2}+\eta\|\mathcal{C}^{+}(r_{+}e^{-2izx})\|_{L_z^{2}}\\
&\leq 2\eta\|\mathcal{C}^{+}(r_{+}e^{-2izx})\|_{L_z^2},
\end{align*}
using \eqref{4.38} we can get \eqref{4.54}.

We take the derivative of both sides of the equation \eqref{4.50} with respect to the variable $x$ to obtain
\begin{equation}\label{4.65}
\partial_{x}\tilde{M}-\mathcal{C}^{+}((\partial_{x}\tilde{M})R_{+})-\mathcal{C}^{-}((\partial_{x}\tilde{M})R_{-})=\hat{F},
\end{equation}
where
\begin{align*}
\hat{F}(x,z)=&\partial_{x}F+\mathcal{C}^{+}(\tilde{M}\partial_{x}R_{+})-\mathcal{C}^{-}(\tilde{M}\partial_{x}R_{-})\\
=&2i\begin{bmatrix}0&-\mathcal{C}^{+}(zr_{+}e^{-2izx})(z)\\ \mathcal{C}^{-}(zr_{-}e^{2izx})(z)&0\end{bmatrix}\\
&+2i\begin{bmatrix}\mathcal{C}^{-}(n_{+}^{(1)}zr_{-}e^{2izx})(z)&-\mathcal{C}^{+}((m_{-}^{(1)}-1)zr_{+}e^{-2izx})(z)\\
\mathcal{C}^{-}((n_{+}^{(2)}-1)zr_{-}e^{2izx})(z)&-\mathcal{C}^{+}(m_{-}^{(2)}zr_{+}e^{-2izx})(z)\end{bmatrix}.
\end{align*}
Similarly, we can rewrite \eqref{4.65} as
\begin{equation}
\hat{J}-\mathcal{C}^{-}(\hat{J}R)=\hat{F},
\end{equation}
where
\begin{align*}
\hat{J}(x,z)&:=\partial_{x}\tilde{M}(x,z)(I-R_{+}(x,z))\\
&=\begin{bmatrix}\partial_{x}m_{-}^{(1)}&-r_{+}e^{-2izx}\partial_{x}m_{-}^{(1)}+\partial_{x}n_{+}^{(1)}\\
\partial_{x}m_{-}^{(2)}&-r_{+}e^{-2izx}\partial_{x}m_{-}^{(2)}+\partial_{x}n_{+}^{(2)}
\end{bmatrix}.
\end{align*}
Repeating the same procedure as in the above proof and using \eqref{4.59},\eqref{4.62} and the Proposition \ref{l6} yields the results \eqref{4.55} and \eqref{4.56}.
\end{proof}

\begin{corollary}\label{l10+}
If $r_{\delta,\pm}(z)\in H^{1}_{z}(\mathbb{R})$ and $r_{j}(k), j=1,2$ satisfy the condition \eqref{smallr}, then there exists a positive constant $C$ such that
\begin{align}
&\sup_{x\in\mathbb{R}^{-}}\|\langle x\rangle m_{\delta,+}^{(2)}(x,z)\|_{L_z^{2}}\leq C\|r_{\delta,-}\|_{H^{1}},\label{4.53+}\\
&\sup_{x\in\mathbb{R}^{-}}\|\langle x\rangle n_{\delta,-}^{(1)}(x,z)\|_{L_z^{2}}\leq C\|r_{\delta,+}\|_{H^{1}}.\label{4.54+}
\end{align}
If in addition, $r_{\delta,\pm}(z)\in L_{z}^{2,1}(\mathbb{R})$, then there exists another positive constant $C$ such that
\begin{align}
&\sup_{x\in\mathbb{R}}\|\partial_{x}m_{\delta,+}^{(2)}(x,z)\|_{L_z^{2}}\leq C(\|r_{\delta,+}\|_{H_{z}^{1}\cap L_{z}^{2,1}}+\|r_{\delta,-}\|_{H_{z}^{1}\cap L_{z}^{2,1}}),\label{4.55+}\\
&\sup_{x\in\mathbb{R}}\|\partial_{x}n_{\delta,-}^{(1)}(x,z)\|_{L_z^{2}}\leq C(\|r_{\delta,+}\|_{H_{z}^{1}\cap L_{z}^{2,1}}+\|r_{\delta,-}\|_{H_{z}^{1}\cap L_{z}^{2,1}}),\label{4.56+}
\end{align}
where the constant $C$ depends on $\|r_{\delta,\pm}\|_{H_{z}^{1}\cap L_{z}^{2,1}}$.
\end{corollary}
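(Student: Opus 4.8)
The plan is to reproduce the argument of Lemma \ref{l10} verbatim, now for the RH problem \ref{RH6} governing $M_{\delta}$, since the two problems have an identical structure once the jump $R$ is replaced by the conjugated jump $R_{\delta}$. First I would record the two column identities coming from \eqref{4.27+} and \eqref{MR+},
\[
m_{\delta,+}-e_{1}=\mathcal{C}^{+}(n_{\delta,-}r_{\delta,-}e^{2izx}),\qquad n_{\delta,-}-e_{2}=\mathcal{C}^{-}(m_{\delta,+}r_{\delta,+}e^{-2izx}),
\]
and assemble them into $\tilde{M}_{\delta}(x,z):=[m_{\delta,+}-e_{1},\,n_{\delta,-}-e_{2}]$. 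In contrast to Lemma \ref{l10}, the Cauchy projectors attached to the two columns are interchanged here (the first column carries $\mathcal{C}^{+}$ and the second $\mathcal{C}^{-}$), because the reconstruction now uses the boundary values $m_{\delta,+}$ and $n_{\delta,-}$. This swap is exactly what forces all the weighted estimates onto $\mathbb{R}^{-}$ rather than $\mathbb{R}^{+}$, in agreement with Corollary \ref{p7+}.

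Next I would split $R_{\delta}$ into its strictly triangular parts $R_{\delta,-}=\begin{bmatrix}0&0\\r_{\delta,-}e^{2izx}&0\end{bmatrix}$ and $R_{\delta,+}=\begin{bmatrix}0&r_{\delta,+}e^{-2izx}\\0&0\end{bmatrix}$, and verify the factorization identity $R_{\delta,-}+R_{\delta,+}=(I-R_{\delta,-})R_{\delta}$, the analog of \eqref{R+R} (the factor $I-R_{\delta,-}$ cancels the lower-right entry $r_{\delta,+}r_{\delta,-}$ of $R_{\delta}$, whereas in \eqref{R+R} the factor $I-R_{+}$ cancelled the upper-left entry). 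Using $\mathcal{C}^{+}=I+\mathcal{C}^{-}$ together with this identity, the two column relations combine into the single singular integral equation
\[
J_{\delta}-\mathcal{C}^{-}(J_{\delta}R_{\delta})=F_{\delta},\qquad J_{\delta}:=\tilde{M}_{\delta}(I-R_{\delta,-}),
\]
where $F_{\delta}$ has off-diagonal entries $\mathcal{C}^{-}(r_{\delta,+}e^{-2izx})$ and $\mathcal{C}^{+}(r_{\delta,-}e^{2izx})$. These are precisely the quantities whose $L^{\infty}_{z}$ and weighted $L^{2}_{z}$ norms over $\mathbb{R}^{-}$ are controlled by Corollary \ref{p7+}.

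To solve this equation I would multiply on the right by $\rho_{j}(k)$ and extract the appropriate row. Since $r_{\delta,\pm}$, like $r_{\pm}$, are even in $k$ (by the construction of $\delta$ in Proposition \ref{p9}), the commutation relations \eqref{tk1}--\eqref{tk2} persist, and using $R_{\delta}\rho_{j}=\rho_{j}S_{\delta}$ with $I+S_{\delta}=\delta_{-}^{\sigma_{3}}(I+S)\delta_{+}^{-\sigma_{3}}$ recasts the equation as $(I-\mathcal{C}^{-}_{S_{\delta}})$ acting on a single row, with forcing $(F_{\delta}\rho_{j})^{(R_{j})}$. Invoking a uniform bound of the type \eqref{4.15} for the inverse of this operator and inserting the Corollary \ref{p7+} estimates for $F_{\delta}$ yields \eqref{4.53+} and \eqref{4.54+}. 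The derivative bounds \eqref{4.55+} and \eqref{4.56+} then follow by differentiating the integral equation in $x$: the forcing acquires terms $\mathcal{C}^{\pm}(z\,r_{\delta,\pm}e^{\mp 2izx})$, controlled by $r_{\delta,\pm}\in L^{2,1}_{z}(\mathbb{R})$ from Proposition \ref{p9}, together with terms built from the already-estimated columns.

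The step I expect to be the main obstacle is the bounded invertibility of the singular integral operator for the conjugated problem. The clean reduction in Lemma \ref{l10} rested on $R\rho_{j}=\rho_{j}S$ together with Proposition \ref{p78}, which supplies $(I-\mathcal{C}^{-}_{S})^{-1}$ from the uniform positivity \eqref{4.7} of $I+S$. For $R_{\delta}$ the congruence is two-sided and non-symmetric, so positivity of $I+S_{\delta,H}$ is not inherited automatically; one must either verify a uniform positivity estimate for the conjugated matrix using that $\delta_{\pm}\in L^{\infty}_{z}(\mathbb{R})$ and are bounded below (Proposition \ref{p9}), or, equivalently, rerun the vanishing-lemma argument of Proposition \ref{p78} directly for $R_{\delta}$. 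An alternative that sidesteps the positivity question is to treat the coupled Cauchy system in $(m_{\delta,+}^{(2)},n_{\delta,-}^{(1)})$ as a fixed-point equation whose off-diagonal coupling carries two factors of $r_{\delta,\pm}$ and is therefore a contraction by the small-norm bound \eqref{smallr}. Once this operator bound is secured, the remainder is bookkeeping identical to Lemma \ref{l10}, the sole substantive change being the systematic replacement of $\mathbb{R}^{+}$ by $\mathbb{R}^{-}$ dictated by the swapped projectors.
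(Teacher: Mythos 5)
Your plan coincides with the paper's treatment of this corollary: the paper offers no separate proof, presenting the statement as the mirror image of Lemma \ref{l10} for RH problem \ref{RH6}, and every structural adaptation you make is the correct one. I checked the factorization $R_{\delta,-}+R_{\delta,+}=(I-R_{\delta,-})R_{\delta}$ (indeed $R_{\delta,-}R_{\delta}=\mathrm{diag}(0,\,r_{\delta,+}r_{\delta,-})$, so it is the lower-right entry that gets cancelled, exactly as you say), the swap of projectors dictated by \eqref{4.27+} and \eqref{MR+} (namely $m_{\delta,+}-e_{1}=\mathcal{C}^{+}(n_{\delta,-}r_{\delta,-}e^{2izx})$ and $n_{\delta,-}-e_{2}=\mathcal{C}^{-}(m_{\delta,+}r_{\delta,+}e^{-2izx})$), the resulting forcing with off-diagonal entries $\mathcal{C}^{-}(r_{\delta,+}e^{-2izx})$ and $\mathcal{C}^{+}(r_{\delta,-}e^{2izx})$ — precisely the objects estimated on $\mathbb{R}^{-}$ in Corollary \ref{p7+} — and the row extraction via $\rho_{j}(k)$, which goes through because $r_{\delta,\pm}$ are functions of $z$ alone.

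The one point where you go beyond the paper is the bounded invertibility of the conjugated singular integral operator, and your suspicion is justified: the paper is silent on it, and it does not follow verbatim from Proposition \ref{p78}. Since $r_{\delta,+}r_{\delta,-}=r_{+}r_{-}$, the diagonal of the conjugated Hermitian part is unchanged, but the off-diagonal entries are rescaled by $|\delta_{+}\delta_{-}|^{\pm1}$. In the local DNLS setting of \cite{Pel} one has $1+r_{+}r_{-}=1+|r|^{2}>0$, so $\log(1+r_{+}r_{-})$ is real, $\delta_{+}\delta_{-}=e^{-i\mathcal{H}(\log(1+r_{+}r_{-}))}$ is unimodular, and positivity transfers with the same constants; here, absent that symmetry, $|\delta_{+}\delta_{-}|=e^{\mathcal{H}(\arg(1+r_{+}r_{-}))}$ is merely bounded, not close to $1$, so \eqref{smallr} alone does not yield positive definiteness of the conjugated jump. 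Note that your contraction fallback fails for the same reason: iterating the two column identities bounds the composed operator by $\|r_{\delta,+}\|_{L^{\infty}_{z}}\|r_{\delta,-}\|_{L^{\infty}_{z}}$, a product of suprema possibly attained at different points, whereas \eqref{smallr} only controls the pointwise product $|r_{\delta,+}(z)r_{\delta,-}(z)|=|r_{+}(z)r_{-}(z)|\leq c_{0}^{2}$. What does transfer cleanly is uniqueness: if $f=\mathcal{C}^{-}(fR_{\delta})$, then $\mathcal{C}(fR_{\delta})(z)\,\delta(z)^{\sigma_{3}}$ is an $L^{2}$ null solution of the unconjugated homogeneous problem (because $I+R_{\delta}=\delta_{-}^{\sigma_{3}}(I+R)\delta_{+}^{-\sigma_{3}}$ and $\delta^{\pm1}$ are bounded and analytic off $\mathbb{R}$), so the vanishing argument of Proposition \ref{p78} applies and, together with Fredholmness of index zero, gives invertibility of $I-\mathcal{C}^{-}_{R_{\delta}}$ for each fixed $x$. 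The uniform-in-$x$ resolvent bound analogous to \eqref{4.15} — which is what \eqref{4.53+}--\eqref{4.56+} actually consume — still requires a further argument (for instance a smallness hypothesis keeping $|\delta_{+}\delta_{-}|$ near $1$, or a continuity-in-$x$ compactness argument in the spirit of \cite{Zho}). On this point your proposal is exactly as incomplete as the paper itself, and your diagnosis of the gap is the most valuable part of it; the bookkeeping surrounding it is correct.
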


\begin{proposition}\label{l11}
If $r_{\pm}(z)\in H_{z}^{1}(\mathbb{R})\cap L_{z}^{2,1}(\mathbb{R})$ and $r_{j}(k),\  j=1,2$ satisfy the condition \eqref{smallr}, then $u\in H^{2}(\mathbb{R})\cap H^{1,1}(\mathbb{R})$ satisfies
\begin{align}\label{4.73}
\|u\|_{H^{2}(\mathbb{R})\cap H^{1,1}(\mathbb{R})}\leq C(\|r_{+}\|_{ H_{z}^{1}(\mathbb{R})\cap L_{z}^{2,1}(\mathbb{R})}+\|r_{-}\|_{ H_{z}^{1}(\mathbb{R})\cap L_{z}^{2,1}(\mathbb{R})}),
\end{align}
where C is a constant depend on $\|r_{\pm}\|_{H_{z}^{1}\cap L_{z}^{2,1}}$. Moreover, the map
\begin{equation}\label{4.80}
[r_-(z),r_+(z)]\mapsto u
\end{equation}
is Lipschitz continuous from $H_{z}^{1}(\mathbb{R})\cap L_{z}^{2,1}(\mathbb{R})$ to $H^{2}(\mathbb{R})\cap H^{1,1}(\mathbb{R})$.
\end{proposition}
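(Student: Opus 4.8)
The plan is to read the potential off the two pairs of reconstruction formulas and to control it half-line by half-line, since the weighted Cauchy-operator bounds of Lemma \ref{p7} and Corollary \ref{p7+} are one-sided. Write $U(x):=u(x)e^{i\int_x^{+\infty}uv\,dy}$ for the quantity appearing on the left of \eqref{4.72} and \eqref{4.91}, and let $V(x)$ denote the quantity reconstructed in \eqref{4.71} and \eqref{4.90}, which encodes $\partial_x v$ and hence, through the reduction $v(x)=i\overline{u}(-x)$, the derivative $\partial_x u$. On $\mathbb{R}^{+}$ I would use the untransformed formulas \eqref{4.72}, \eqref{4.71} together with the estimates \eqref{4.53}--\eqref{4.56} of Lemma \ref{l10} and \eqref{4.38}, \eqref{4.39} of Lemma \ref{p7}; on $\mathbb{R}^{-}$ I would use the $\delta$-transformed formulas \eqref{4.91}, \eqref{4.90} with the matching estimates \eqref{4.53+}--\eqref{4.56+} of Corollary \ref{l10+} and \eqref{4.38+}, \eqref{4.39+} of Corollary \ref{p7+}. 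The scalar RH problem \ref{RH5} is precisely what supplies the second half-line, which is needed here because the nonlocal reduction destroys the symmetry that would otherwise let a single formula cover all of $\mathbb{R}$.

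For the norm bound \eqref{4.73} I would argue in three layers of regularity. First, splitting $m_{-}^{(1)}=1+(m_{-}^{(1)}-1)$ in the reconstruction integrals (and likewise for the $\delta$-quantities), the leading term is a rescaled inverse Fourier transform of $r_{\pm}$, whose $L^{2}_{x}$ norm is controlled by $\|r_{\pm}\|_{L^{2}_{z}}$ via Plancherel, while the remainder is estimated by pairing the $L^{2}_{z}$-bounds on $m_{-}^{(1)}-1$ and $n_{+}^{(1)}$ coming from Lemma \ref{l10} against $r_{\pm}\in L^{2}_{z}$; this yields $U,V\in L^{2}(\mathbb{R})$. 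Next, the weight $\langle x\rangle$ is traded for a $z$-derivative on $r_{\pm}$ by Fourier duality and combined with the weighted bounds \eqref{4.53}, \eqref{4.54}, \eqref{4.53+}, \eqref{4.54+}, so that $r_{\pm}\in H^{1}_{z}$ controls $\langle x\rangle U$ and $\langle x\rangle V$ in $L^{2}$. Finally the $x$-derivatives are produced either by differentiating the reconstruction integrals, which brings down a factor $z$ and calls on $r_{\pm}\in L^{2,1}_{z}$, or by invoking the $\partial_x$-estimates \eqref{4.55}, \eqref{4.56}, \eqref{4.55+}, \eqref{4.56+}; here the extra powers of $k$ built into $r_{-}=2ikr_{1}$ and $r_{+}=-r_{2}/(2ik)$ via Lemma \ref{l4} are what make the count close at the level $H^{2}\cap H^{1,1}$ instead of forcing a higher weight on $r_{\pm}$. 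Assembling the two half-lines gives $U,V\in H^{2}(\mathbb{R})\cap H^{1,1}(\mathbb{R})$.

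It then remains to pass from $U,V$ to $u$ and to establish Lipschitz continuity. The factor $e^{i\int_x^{+\infty}uv\,dy}$ is read off from the subleading large-$z$ asymptotics of $M$ and, by Corollaries \ref{l9} and \ref{csmall} together with the smallness \eqref{smallr}, is bounded above and away from zero uniformly in $x$; differentiating $u=Ue^{-i\int_x^{+\infty}uv\,dy}$ and feeding in $V$ to supply $\partial_x v$ (hence $\partial_x u$), with the Banach-algebra property of $H^{1}$ absorbing the nonlinear products $uv$ and $u^{2}v^{2}$, recovers $u\in H^{2}(\mathbb{R})\cap H^{1,1}(\mathbb{R})$ obeying \eqref{4.73}. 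For the map \eqref{4.80} I would rerun the same estimates on differences: Proposition \ref{p9} gives that $r_{\pm}\mapsto r_{\delta,\pm}$ is Lipschitz, the second-resolvent identity together with the uniform bound \eqref{4.15} on $(I-\mathcal{C}^{-}_{S})^{-1}$ gives Lipschitz dependence of $M_{\pm}$ and $M_{\delta,\pm}$ on the reflection coefficients, and substituting these differences back through the reconstruction formulas and the bounds above yields $\|u-\tilde{u}\|_{H^{2}\cap H^{1,1}}\lesssim\|r_{\pm}-\tilde{r}_{\pm}\|_{H^{1}_{z}\cap L^{2,1}_{z}}$.

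I expect the main obstacle to be twofold. The delicate part of the regularity count is the second derivative: closing $\partial_x^{2}u\in L^{2}$ with only $r_{\pm}\in L^{2,1}_{z}$, rather than the naive $L^{2,2}_{z}$ that a direct double differentiation of the reconstruction integral would demand, forces one to exploit the $\partial_x$-estimates \eqref{4.55}, \eqref{4.56} and the hidden $k$-factors of the DNLS reflection coefficients. The second difficulty, tied directly to the nonlocal structure emphasized in the Remark, is that $uv=iu(x)\overline{u}(-x)$ is not real, so the exponential factor is genuinely non-unimodular; controlling it, and thereby breaking the apparent circularity between $u$ and $e^{i\int_x^{+\infty}uv\,dy}$, is exactly where the small-norm hypothesis \eqref{small+} is indispensable.
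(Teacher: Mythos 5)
Your proposal is correct and follows essentially the same route as the paper's own proof: the paper likewise splits $m_{-}^{(1)}=1+(m_{-}^{(1)}-1)$ (and its $\delta$-analogue) so that the leading term is the Fourier transform of $r_{\pm}$ controlled by Plancherel, bounds the remainders $h_{1},h_{2}$ on $\mathbb{R}^{+}$ via \eqref{4.38}, \eqref{4.54} and on $\mathbb{R}^{-}$ via the $\delta$-transformed formulas \eqref{4.91}, \eqref{4.90} with \eqref{4.38+}, \eqref{4.54+} and Proposition \ref{p9}, obtains $\|u\|_{H^{1,1}}$ from the $\partial_{x}[v\,e^{\cdot}]$ reconstruction together with the embedding $H^{1}\hookrightarrow L^{\infty}$, closes $\|u\|_{H^{2}}$ by differentiating \eqref{4.71}, \eqref{4.90} in $x$ (costing exactly $r_{\pm}\in L^{2,1}_{z}$, as you predicted), and proves Lipschitz continuity by rerunning the estimates on differences. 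Your explicit attention to the non-unimodular exponential factor and to the $(I-\mathcal{C}^{-}_{S})^{-1}$ resolvent bound \eqref{4.15} only makes explicit steps the paper treats tersely, so there is nothing substantive to correct.
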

\begin{proof}
We first prove that $u\in L^{2,1}(\mathbb{R})$. The reconstruction formula \eqref{4.72} can be rewritten as
\begin{equation*}
u(x)e^{i\int^{+\infty}_{x}u(y)v(y)dy}=\frac{2}{\pi i}\int_{\mathbb{R}}r_{+}(z)e^{-2izx}dz+h_{1}(x),
\end{equation*}
where
\begin{equation*}
h_{1}(x):=\frac{2}{\pi i}\int_{\mathbb{R}}[m_{-}^{(1)}(x,z)-1]r_{+}(z)e^{-2izx}dz.
\end{equation*}
From \eqref{4.27} and \eqref{MR}, we have
\begin{align*}
h_{1}(x)=&\frac{2}{\pi i}\int_{\mathbb{R}}\mathcal{C}^{-}(n_{+}^{(1)}(x,z)r_{-}(z)e^{2izx})(z)r_{+}(z)e^{-2izx}dz\\
=&-\frac{2}{\pi i}\int_{\mathbb{R}}n_{+}^{(1)}(x,z)r_{-}(z)e^{2izx}\mathcal{C}^{+}(r_{+}e^{-2izx})(z)dz,
\end{align*}
and therefore
\begin{align*}
\sup_{x\in\mathbb{R}^{+}}|\langle x\rangle^{2}h_{1}(x)|
&\leq  \frac{2}{\pi} \|r_{-}\|_{L_{z}^{\infty}}\sup_{x\in\mathbb{R}^{+}}\|\langle x\rangle n_{+}^{(1)}\|_{L_{z}^{\infty}}\sup_{x\in\mathbb{R}^{+}}\|\langle x\rangle\mathcal{C}^{+}(r_{+}e^{-2izx})\|_{L_{z}^{2}}\\
&\lesssim_{\|r_{\pm}\|_{H^{1}}}\|r_{+}\|_{H^{1}}
\end{align*}
from \eqref{4.38} and \eqref{4.54}. Furthermore,
\begin{align*}
\|u\|_{L^{2,1}(\mathbb{R}^{+})}&=\frac{2}{\pi}\|\langle x\rangle\int_{\mathbb{R}}r_{+}(z)e^{-2izx}dz\|_{L^{2}(\mathbb{R}^{+})}+\|\langle x\rangle h_{1}(x)\|_{L^{2}(\mathbb{R}^{+})}\\
&\lesssim \|\langle x\rangle\widehat{r}_{+}\|_{L^{2}}+\sup_{x\in\mathbb{R}^{+}}|\langle x\rangle^{2}h_{1}(x)|\|\langle x\rangle^{-1}\|_{L^{2}}\\
&\lesssim_{\|r_{\pm}\|_{H^{1}}}\|r_{+}\|_{H^{1}}.
\end{align*}
Similarly, we can rewrite \eqref{4.91} as
\begin{equation*}
u(x)e^{i\int^{+\infty}_{x}u(y)v(y)dy}=\frac{2}{\pi i}\int_{\mathbb{R}}r_{\delta,+}(z)e^{-2izx}dz+h_{2}(x)
\end{equation*}
where
\begin{equation*}
h_{2}(x):=\frac{2}{\pi i}\int_{\mathbb{R}}[m_{\delta,+}^{(1)}(x,z)-1]r_{\delta,+}(z)e^{-2izx}dz.
\end{equation*}
Similar to the above proof we have
\begin{equation*}
\|u\|_{L^{2,1}(\mathbb{R}^{-})}\lesssim_{\|r_{\delta,\pm}\|_{H^{1}}}\|r_{\delta,+}\|_{H^{1}}\lesssim_{\|r_{\pm}\|_{H^{1}}}\|r_{+}\|_{H^{1}}+\|r_{-}\|_{H^{1}}
\end{equation*}
from \eqref{4.27}, \eqref{MR}, \eqref{4.38+}, \eqref{4.54+} and Proposition \ref{p9}. Thus we have completed the proof of $u\in L^{2,1}(\mathbb{R})$.

Next we prove that $u\in H^{2}(\mathbb{R})\cap H^{1,1}(\mathbb{R})$. By the same analytical technique we can obtain
\begin{align*}
&\|\partial_{x}[v(x)e^{\frac{1}{2i}\int^{+\infty}_{x}u(y)v(y)dy}]\|_{L^{2,1}(\mathbb{R}^{+})}\lesssim_{\|r_{\pm}\|_{H^{1}}}\|r_{-}\|_{H^{1}},\\
&\|\partial_{x}[v(x)e^{\frac{1}{2i}\int^{+\infty}_{x}u(y)v(y)dy}]\|_{L^{2,1}(\mathbb{R}^{-})}\lesssim_{\|r_{\pm}\|_{H^{1}}}\|r_{+}\|_{H^{1}}+\|r_{-}\|_{H^{1}}
\end{align*}
from the reconstruction formulas \eqref{4.71} and \eqref{4.90}. Because $H^{1}(\mathbb{R})$ is embedded into $L^{\infty}(\mathbb{R})$, we have
\begin{equation*}
\|u\|_{H^{1,1}(\mathbb{R})}\lesssim_{\|r_{\pm}\|_{H^{1}}}\|r_{+}\|_{H^{1}}+\|r_{-}\|_{H^{1}}.
\end{equation*}
We take the derivative of \eqref{4.71} and \eqref{4.90} with respect to the variable $x$, and using the same analysis we can get
\begin{equation*}
\|u\|_{H^{2}(\mathbb{R})}\lesssim_{\|r_{\pm}\|_{H^{1}\cap L^{2,1}}}\|r_{+}\|_{H^{1}\cap L^{2,1}}+\|r_{-}\|_{H^{1}\cap L^{2,1}}
\end{equation*}
by Lemma \ref{p7}, \ref{l10}, and the corollary \ref{p7+}, \ref{l10+}.

Finally, we prove that the map \eqref{4.80} is Lipschitz continuous. Suppose that $r_{\pm}, \tilde{r}_{\pm} \in H^{1}(\mathbb{R})\cap L^{2,1}(\mathbb{R})$ satisfy $\|r_{\pm}\|_{H^{1}\cap L^{2,1}}, \|\tilde{r}_{\pm}\|_{H^{1}\cap L^{2,1}}\leq \gamma$ for some $\gamma>0$. Denote the corresponding potentials by $u$ and $\tilde{u}$ respectively. Using the reconstruction formulas \eqref{4.72}, \eqref{4.71}, \eqref{4.91}, \eqref{4.90} and repeating the above proof we can obtain that there exists $\gamma$-dependent constant $C(\gamma)$ such that
\begin{equation*}
\|u-\tilde{u}\|_{H^{2}(\mathbb{R})\cap H^{1,1}(\mathbb{R})}\leq C(\gamma)(\|r_{+}-\tilde{r}_{+}\|_{H^{1}\cap L^{2,1}}+\|r_{-}-\tilde{r}_{-}\|_{H^{1}\cap L^{2,1}}).
\end{equation*}
\end{proof}
\section{Time evolution and global solutions}

Suppose that the fundamental vector solution $\phi(x,t,k)$ satisfying the Lax pair \eqref{1.9} and \eqref{1.10} associated with the potential $u(x,t)$ has the following form
\begin{equation*}
\varphi_{\pm}(x,t,k)e^{-ik^{2}x-2ik^{4}t}, \ \ \psi_{\pm}(x,t,k)e^{ik^{2}x+2ik^{4}t}.
\end{equation*}
For any fixed $t$, $\varphi_{\pm}(x,t,k)$ and $ \psi_{\pm}(x,t,k)$ has the same conclusion as the Corollary \ref{c2}, and satisfies the same boundary conditions
\begin{equation*}
\varphi_{\pm}(x,t,k)\rightarrow e_1,\ \
\psi_{\pm}(x,t,k)\rightarrow e_2,\ \ \text{as}\  x\rightarrow \pm\infty.
\end{equation*}
We define matrices
\begin{align*}
&J_{-}(x, t,k):=[\varphi_{-}(x,t,k),\psi_{-}(x,t,k)]e^{-ik^2\sigma_{3}x},\\
&J_{+}(x, t,k):=[\varphi_{+}(x,t,k),\psi_{+}(x,t,k)]e^{-ik^2\sigma_{3}x},
\end{align*}
that satisfy equation \eqref{1.9}, then by the theory of ODE it follows that there exists $\Lambda(t,k)=\begin{bmatrix}a(t,k)&c(t,k)\\b(t,k)&d(t,k)\end{bmatrix}$ such that $J_{-}(x,t,k)=J_{+}(x,t,k)\Lambda(t,k)$, i.e.
\begin{align}\label{5.1}
\begin{aligned}
&\begin{bmatrix}\varphi_-(x,t,k)e^{-ik^2x}&\psi_-(x,t,k)e^{ik^2x}\end{bmatrix}\\
=&\begin{bmatrix}\varphi_+(x,t,k)e^{-ik^2x}&\psi_+(x,t,k)e^{ik^2x}\end{bmatrix}\begin{bmatrix}a(t,k)&c(t,k)\\b(t,k)&d(t,k)\end{bmatrix}.
\end{aligned}
\end{align}
Thus a version of the scattering relation \eqref{2.59} with time $t$ is established.
And we can define the time-dependent reflection coefficients
\begin{equation}
r_1(t,k):=\frac{b(t,k)}{a(t,k)},\ \ r_2(t,k):=\frac{c(t,k)}{d(t,k)},\ \ k\in \mathbb{R}\cup i\mathbb{R}
\end{equation}
and
\begin{equation}
r_{-}(t,z):=2ik r_{1}(t,k),\ \ r_{+}(t,z):=-\frac{r_2(t,k)}{2ik},\ \ z\in\mathbb{R}.
\end{equation}

Some properties of the reflection coefficients are given in the following lemma.
\begin{lemma}\label{p10}
If $u_{0}\in H^{2}(\mathbb{R})\cap H^{1,1}(\mathbb{R})$ satisfies the condition \eqref{small+}, then for every $t\in\mathbb{R}$
\begin{equation}\label{5.3a-}
\|r_{j}(t,k)\|_{L_{z}^{2,1}\cap L_{z}^{\infty}}=\|r_{j}(0,k)\|_{L_{z}^{2,1}\cap L_{z}^{\infty}},\ \ j=1,2,
\end{equation}
and $\forall  t\in[0,T]$
\begin{equation}\label{5.3a}
\|r_{\pm}(t,z)\|_{H_{z}^{1}(\mathbb{R})\cap L_{z}^{2,1}(\mathbb{R})}\leq C(T)\|r_{\pm}(0,z)\|_{H_{z}^{1}(\mathbb{R})\cap L_{z}^{2,1}(\mathbb{R})},
\end{equation}
where $C(T)$ is a constant that depends on $T$ and is linear with respect to $T$.
\end{lemma}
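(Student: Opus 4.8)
The plan is to first determine the explicit time dependence of the scattering data from the temporal part \eqref{1.10} of the Lax pair, and then read off the estimates \eqref{5.3a-} and \eqref{5.3a} from the fact that this dependence is a pure, unimodular phase. The main content is the first step; the estimates are then elementary.

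First I would exploit that as $x\to\pm\infty$ the potential and its derivatives vanish, so $\tilde{Q}(u)\to 0$ and the temporal equation \eqref{1.10} degenerates to the free system $\Phi_t=-2ik^4\sigma_3\Phi$. Consequently the columns of the simultaneous Jost solutions $\varphi_\pm(x,t,k)e^{-ik^2x-2ik^4t}$ and $\psi_\pm(x,t,k)e^{ik^2x+2ik^4t}$ carry exactly the asymptotic time factors $e^{-2ik^4t}$ and $e^{2ik^4t}$; equivalently, they differ from the purely spatial Jost matrices $J_\pm=[\varphi_\pm e^{-ik^2x},\psi_\pm e^{ik^2x}]$ of \eqref{5.1} only by the right factor $e^{-2ik^4t\sigma_3}$. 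Since $u(x,t)$ solves \eqref{1.1}, the Lax pair \eqref{1.9}--\eqref{1.10} is compatible, so each simultaneous solution satisfies \eqref{1.10} exactly: indeed $\partial_t\Phi-V\Phi$ again solves the spatial equation and vanishes at the normalization end, hence is identically zero on $z\in\mathbb{R}$ where the spatial exponentials are oscillatory. Differentiating the scattering relation in $t$ then forces the scattering matrix of the simultaneous solutions to be $t$-independent, and transporting this back to the matrix $\Lambda(t,k)$ of \eqref{5.1} yields the conjugation identity $\Lambda(t,k)=e^{-2ik^4t\sigma_3}\Lambda(0,k)e^{2ik^4t\sigma_3}$, equivalently $\partial_t\Lambda=2ik^4[\Lambda,\sigma_3]$.

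Reading off the entries gives $a(t,k)=a(0,k)$, $d(t,k)=d(0,k)$, $b(t,k)=b(0,k)e^{4ik^4t}$ and $c(t,k)=c(0,k)e^{-4ik^4t}$. Hence, from the definitions \eqref{3.1} and \eqref{3.17} together with $k^4=z^2$,
\begin{equation*}
r_1(t,k)=r_1(0,k)e^{4iz^2t},\quad r_2(t,k)=r_2(0,k)e^{-4iz^2t},
\end{equation*}
and
\begin{equation*}
r_-(t,z)=r_-(0,z)e^{4iz^2t},\quad r_+(t,z)=r_+(0,z)e^{-4iz^2t}.
\end{equation*}
Because $|e^{\pm4iz^2t}|=1$ for $z\in\mathbb{R}$, the moduli of all these coefficients are preserved pointwise in $t$, which immediately gives the conservation of the $L_z^\infty$ and $L_z^{2,1}$ norms in \eqref{5.3a-} as well as the $L_z^2$ and $L_z^{2,1}$ parts of \eqref{5.3a}. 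For the remaining $H_z^1$ part I would differentiate the phase, using $\partial_z r_-(t,z)=e^{4iz^2t}[\partial_z r_-(0,z)+8izt\,r_-(0,z)]$, to obtain
\begin{equation*}
\|\partial_z r_-(t,\cdot)\|_{L_z^2}\le\|\partial_z r_-(0,\cdot)\|_{L_z^2}+8t\,\|r_-(0,\cdot)\|_{L_z^{2,1}};
\end{equation*}
for $t\in[0,T]$ this is at most $(1+8T)\|r_-(0,\cdot)\|_{H_z^1\cap L_z^{2,1}}$, and the same estimate holds for $r_+$. Collecting the pieces produces \eqref{5.3a} with a constant $C(T)$ that is affine, hence linear in $T$.

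The main obstacle is the first step: rigorously justifying that the time evolution of the scattering data is a pure phase. This rests on the compatibility of \eqref{1.9}--\eqref{1.10} (which presupposes that $u$ solves \eqref{1.1}) and on the uniqueness argument for solutions of the spatial ODE vanishing at infinity, which must be carried out on $z\in\mathbb{R}$ where the exponentials remain bounded. A secondary point specific to the nonlocal setting is that, by Corollary \ref{c2+}(iv), $a(k)$ and $d(k)$ are not related by symmetry, so all four coefficients $a,b,c,d$ must be tracked independently; fortunately the evolution is diagonal in the sense that $a,d$ are conserved while $b,c$ acquire conjugate phases, so this asymmetry does not obstruct the computation. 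Once the phase law is in hand, the norm estimates are routine.
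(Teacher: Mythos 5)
Your proposal is correct and follows essentially the same route as the paper: attach the factor $e^{-2ik^{4}\sigma_{3}t}$ to the Jost matrices to obtain simultaneous solutions of \eqref{1.9}--\eqref{1.10}, deduce $\partial_{t}\tilde{\Lambda}=0$ (your explicit uniqueness argument for why the simultaneous solutions satisfy \eqref{1.10} just fills in a step the paper leaves implicit), read off the phase law $a,d$ constant, $b,c\mapsto b\,e^{4ik^{4}t},c\,e^{-4ik^{4}t}$ with $k^{4}=z^{2}$, and then derive the norm estimates from unimodularity of the phase plus the product-rule bound $\|\partial_{z}r_{\pm}(t,\cdot)\|_{L_{z}^{2}}\leq\|\partial_{z}r_{\pm}(0,\cdot)\|_{L_{z}^{2}}+8t\|r_{\pm}(0,\cdot)\|_{L_{z}^{2,1}}$. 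Your final estimate even states this bound in corrected form, where the paper's displayed inequality carries a typo ($\|\partial_{z}r_{\pm}(0,z)\|_{L_{z}^{2,1}}$ in place of $\|r_{\pm}(0,z)\|_{L_{z}^{2,1}}$).
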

\begin{proof}
We define the matrices
\begin{align*}
\tilde{J}_{-}(x,t,k):=J_{-}(x, t,k)e^{-2ik^{4}\sigma_{3}t},\ \
\tilde{J}_{+}(x,t,k):=J_{+}(x, t,k)e^{-2ik^{4}\sigma_{3}t},
\end{align*}
that satisfy the linear systems \eqref{1.9} and \eqref{1.10}. It follows from relation \eqref{5.1} that there exists $\tilde{\Lambda}(t,k)$ such that
\begin{equation*}
\tilde{J}_{-}(x,t,k)=\tilde{J}_{+}(x,t,k)\tilde{\Lambda}(t,k),
\end{equation*}
where
\begin{align*}
\tilde{\Lambda}(t,k)=e^{2ik^{4}\sigma_{3}t}\Lambda(t,k)e^{-2ik^{4}\sigma_{3}t}=\begin{bmatrix}a(t,k)&c(t,k)e^{4ik^4t}\\b(t,k)e^{-4ik^4t}&d(t,k)\end{bmatrix}.
\end{align*}
Substituting $\tilde{J}_{+}(x,t,k)\tilde{\Lambda}(t,k)$ into \eqref{1.10} and using the zero trace property of \eqref{1.9}, it follows that $\partial_{t}\tilde{\Lambda}(t,k)=0$, which means that
\begin{align*}
&a(t,k)=a(0,k),\ b(t,k)=b(0,k)e^{4ik^{4}t},\\ &d(t,k)=d(0,k),\ c(t,k)=c(0,k)e^{-4ik^{4}t}.
\end{align*}
So we have
\begin{align}
&r_{1}(t,k)=r_{1}(0,k)e^{4iz^{2}t},\ r_{2}(t,k)=r_{2}(0,k)e^{-4iz^{2}t},\label{5.3-}\\
&r_{-}(t,z)=r_{-}(0,z)e^{4iz^{2}t},\ r_{+}(t,z)=r_{+}(0,z)e^{-4iz^{2}t},\label{5.3}
\end{align}
which imply that $\forall t\in\mathbb{R}$,  $|r_{j}(t,k)|=|r_{j}(0,k)|,\ j=1,2$ and $|r_{\pm}(t,z)|=|r_{\pm}(0,z)|, z\in\mathbb{R}$.
Because of
\begin{align*}
\|\partial_{z} r_{\pm}(t,z)\|_{L_{z}^{2}}=&\|\partial_{z} r_{\pm}(0,z)\mp 8iztr_{\pm}(0,z)\|_{L_{z}^{2}}\\
\leq &\|\partial_{z} r_{\pm}(0,z)\|_{L_{z}^{2}}+8t\|\partial_{z} r_{\pm}(0,z)\|_{L_{z}^{2,1}},
\end{align*}
we have that the inequality \eqref{5.3a} holds.
\end{proof}

We denote $M(x,t,z)$ as the solution for the RH problem \ref{RH3} under time-dependent reflection coefficients $r_{\pm}(t,z)$. Since the properties \eqref{5.3a-} and \eqref{5.3a} of the time-dependent reflection coefficients, it follows from Proposition \ref{l9} that $M(x,t,z)$ exists uniquely. We denote $\Psi(x,t,z)$ as the unique solution for the RH problem \ref{RH1} under time-dependent reflection coefficients $r_{j}(t,k), \ k=1,2$. By inverse scattering theory \cite{Fok}, \cite{Liu1} it can be verified that the $\Psi(x,t,z)e^{-ik^2\sigma_{3}x-2ik^{4}\sigma_{3}t}$ satisfies the Lax pair \eqref{1.9} and \eqref{1.10}. Therefore the function $u(x,t)$ reconstituted from the reconstruction formulas is the solution of \eqref{1.1}. The following proposition shows that $u(x,t)$ can be controlled by $u_{0}$ when $t\in[0,T]$.

\begin{proposition}
If $u_{0}\in H^{2}(\mathbb{R})\cap H^{1,1}(\mathbb{R})$ satisfies the condition \eqref{small+}, then for every $t\in[0,T]$ we have
\begin{equation}\label{5.5}
\left\|u(\cdot,t)\right\|_{H^{2}(\mathbb{R})\cap H^{1,1}(\mathbb{R})}\leq C(T)\left\|u_{0}\right\|_{H^{2}(\mathbb{R})\cap H^{1,1}(\mathbb{R})},
\end{equation}
and the mapping
\begin{equation}\label{5.7}
H^{2}(\mathbb{R})\cap H^{1,1}(\mathbb{R})\ni u_{0}\mapsto u(x,t)\in C([0,T], H^{2}(\mathbb{R})\cap H^{1,1}(\mathbb{R}))
\end{equation}
is Lipschitz continuous.
\end{proposition}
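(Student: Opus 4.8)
The plan is to factor the solution map through the scattering transform and exploit the explicit, modulus-preserving time evolution of the reflection coefficients. Concretely, I would write $u_0 \mapsto r_\pm(0,z) \mapsto r_\pm(t,z) \mapsto u(\cdot,t)$ as a composition of three Lipschitz maps and then chain the corresponding constants. The two estimates in the statement will follow from controlling each link uniformly for $t \in [0,T]$.

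First I would apply Proposition \ref{l6} at $t=0$: since $u_0 \in H^2(\mathbb{R}) \cap H^{1,1}(\mathbb{R})$ satisfies \eqref{small+}, the reflection coefficients $r_\pm(0,z)$ lie in $H^1_z(\mathbb{R}) \cap L^{2,1}_z(\mathbb{R})$, and because the zero potential yields vanishing reflection coefficients, the Lipschitz continuity of the forward map gives $\|r_\pm(0,z)\|_{H^1_z \cap L^{2,1}_z} \lesssim \|u_0\|_{H^2 \cap H^{1,1}}$. Next I would invoke the explicit time dependence \eqref{5.3} of Lemma \ref{p10}, namely $r_\pm(t,z) = r_\pm(0,z)e^{\mp 4iz^2 t}$. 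The decisive structural fact is that $|r_j(t,k)| = |r_j(0,k)|$, so the smallness condition \eqref{smallr} --- and hence the positive definiteness \eqref{pd} underlying the solvability of the RH problems (Proposition \ref{p78}, Corollary \ref{l9}) --- is preserved for every $t$. The bound \eqref{5.3a} then controls $\|r_\pm(t,z)\|_{H^1_z \cap L^{2,1}_z}$ by $C(T)$ times its value at $t=0$, with $C(T)$ linear in $T$ because $\partial_z$ acting on the oscillatory factor produces $\mp 8izt$.

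Finally I would feed the time-evolved data into the reconstruction estimate \eqref{4.73} of Proposition \ref{l11}, obtaining $\|u(\cdot,t)\|_{H^2 \cap H^{1,1}} \lesssim \|r_+(t,\cdot)\|_{H^1_z \cap L^{2,1}_z} + \|r_-(t,\cdot)\|_{H^1_z \cap L^{2,1}_z}$; combining the three steps yields \eqref{5.5}. For the Lipschitz continuity of the map \eqref{5.7}, I would run the identical composition for the difference of two admissible data $u_0, \tilde u_0$: Proposition \ref{l6} bounds $\|r_\pm(0) - \tilde r_\pm(0)\|_{H^1_z \cap L^{2,1}_z}$, the time evolution is linear in the data and transports this difference with the same constant $C(T)$ as in \eqref{5.3a}, and the Lipschitz map \eqref{4.80} of Proposition \ref{l11} transfers it back to $\|u(\cdot,t) - \tilde u(\cdot,t)\|_{H^2 \cap H^{1,1}}$. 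Taking the supremum over $t \in [0,T]$, together with the continuity of $t \mapsto r_\pm(t,\cdot)$ in $H^1_z \cap L^{2,1}_z$ (dominated convergence applied to the explicit formula), gives both $u \in C([0,T], H^2(\mathbb{R}) \cap H^{1,1}(\mathbb{R}))$ and the stated Lipschitz bound.

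The routine part is the chaining of constants; the point requiring care is that the inverse map must remain applicable along the entire flow. This rests entirely on the time-invariance $|r_j(t,k)| = |r_j(0,k)|$ from \eqref{5.3-}: without it the small-norm hypothesis could deteriorate in time, the jump matrix could lose positive definiteness, and the reconstruction estimate of Proposition \ref{l11} would no longer be available. Hence I expect the main conceptual content to be exactly this conservation of the modulus of the reflection coefficients, which is what allows the global-in-time argument to close.
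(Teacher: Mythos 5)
Your proposal is correct and follows essentially the same route as the paper's proof: you chain Proposition \ref{l6} (forward map), Lemma \ref{p10} (explicit evolution \eqref{5.3}, with the modulus preservation \eqref{5.3-} keeping the small-norm condition \eqref{smallr}, and hence the solvability of the RH problem, valid along the flow), and Proposition \ref{l11} (reconstruction), both for the a priori bound \eqref{5.5} and, applied to differences of two data, for the Lipschitz estimate. The only cosmetic deviation is that the paper proves the continuity of $t\mapsto r_{\pm}(t,\cdot)$ in $H^{1}_{z}(\mathbb{R})\cap L^{2,1}_{z}(\mathbb{R})$ by an explicit frequency cutoff at $|z|\leq N$ rather than by dominated convergence, but the content is identical.
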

\begin{proof}
It follows from Proposition \ref{l11}, Proposition \ref{l6} and Lemma \ref{p10} that
\begin{align*}
\left\|u(\cdot,t)\right\|_{H^{2}\cap H^{1,1}}&\leq C_{1}(\left\|r_{+}(t,z)\right\|_{H_{z}^{1}\cap L_{z}^{2,1}}+\left\|r_{-}(t,z)\right\|_{H_{z}^{1}\cap L_{z}^{2,1}})\\
&\leq C_{2}(T)(\left\|r_{+}(0,z)\right\|_{H_{z}^{1}\cap L_{z}^{2,1}}+\left\|r_{-}(0,z)\right\|_{H_{z}^{1}\cap L_{z}^{2,1}})\\
&\leq C(T)\left\|u_0\right\|_{H^{2}\cap H^{1,1}}.
\end{align*}
The proof of \eqref{5.5} is completed.

Before proving the Lipschitz continuity of mapping \eqref{5.7}, we show that $u(x,t)$ is continuous with respect to time $t$ in the sense of the norm $H^{2}(\mathbb{R})\cap H^{1,1}(\mathbb{R})$.
In fact, for any $t_{1},t_{2}\in [0,T]$
\begin{equation*}
\|r_{-}(t_{1},\cdot)-r_{-}(t_{2},\cdot)\|_{H^{1}(\mathbb{R})\cap L^{2,1}(\mathbb{R})}\lesssim\|r_{-}(0,\cdot)\|_{H^{1}(\mathbb{R})\cap L^{2,1}(\mathbb{R})}.
\end{equation*}
For any $\varepsilon>0$, there exists $N>0$ such that
\begin{equation*}
\left\|r_{-}(0,\cdot)\right\|_{H^{1}(|z|>N)\cap L^{2,1}(|z|>N)}<\varepsilon,
\end{equation*}
when $|z|\leq N$, we have
\begin{align*}
|e^{4iz^{2}(t_1-t_{2})}-1|&\leq 4N^{2}|t_1-t_2|,\\ |t_1e^{4iz^{2}t_1}-t_2e^{4iz^{2}t_2}|&\leq (4N^{2}T+1)|t_1-t_2|,
\end{align*}
and
\begin{align*}
&\left\|r_{-}(t_{1},\cdot)-r_{-}(t_{2},\cdot)\right\|_{H^{1}(|z|\leq N)\cap L^{2,1}(|z|\leq N)}\\
\leq & C(N,T)|t_1-t_2|\left\|r_{-}(0,\cdot)\right\|_{H^{1}(|z|\leq N))\cap L^{2,1}(|z|\leq N)},
\end{align*}
thus
\begin{equation*}
\left\|r_{-}(t_{1},\cdot)-r_{-}(t_{2},\cdot)\right\|_{H^{1}(\mathbb{R})\cap L^{2,1}(\mathbb{R})}<\varepsilon.
\end{equation*}
We can get similar result for $r_{+}(t,z)$, and
Proposition \ref{l11} gives
\begin{equation*}
\left\|u(\cdot,t_1)-u(\cdot,t_{2})\right\|_{H^{2}(\mathbb{R})\cap H^{1,1}(\mathbb{R})}<\varepsilon.
\end{equation*}

Let $u_{0}, \tilde{u}_{0} \in H^{2}(\mathbb{R})\cap H^{1,1}(\mathbb{R})$ satisfy $\left\|u_{0}\right\|_{H^{2}\cap H^{1,1}}, \left\|\tilde{u}_{0}\right\|_{H^{2}\cap H^{1,1}}\leq U$  for some $U>0$. Denote the corresponding scattering data by $r_{\pm}, \tilde{r}_{\pm}$ respectively. Estimates \eqref{5.3a} and Lipschitz continuity in Proposition \ref{l6} and Proposition \ref{l11} shows that
\begin{align*}
&\left\|u-\tilde{u}\right\|_{C([0,T], H^{2}\cap H^{1,1})}
= \left\|u(\cdot,t^{*})-\tilde{u}(\cdot,t^*)\right\|_{H^{2}\cap H^{1,1}}\\
\leq &C_{1}(U,T)(\|r_{+}(t^{*},\cdot)-\tilde{r}_{+}(t^{*},\cdot)\|_{H^{1}\cap L^{2,1}}+\|r_{-}(t^{*},\cdot)-\tilde{r}_{-}(t^{*},\cdot)\|_{H^{1}\cap L^{2,1}})\\
\leq & C_{2}(U,T)(\|r_{+}(0,\cdot)-\tilde{r}_{+}(0,\cdot)\|_{H^{1}\cap L^{2,1}}+\|r_{-}(0,\cdot)-\tilde{r}_{-}(0,\cdot)\|_{H^{1}\cap L^{2,1}})\\
\leq & C(U,T)\|u_{0}-\tilde{u}_{0}\|_{H^{2}\cap H^{1,1}},
\end{align*}
where $t^{*}\in [0,T]$, and $C(U,T)$ is a polynomial function with respect to $T$.
\end{proof}

Finally, we give the proof of the Theorem \ref{t1}.

\noindent\textbf{Proof of Theorem \ref{t1}:}
Let $T_{max}>0$ be the maximal existence time of the local solution $u(x,t)$. Suppose that $T_{max}<\infty$, then by the estimate \eqref{5.5}, there exists a constant $M$ and a time series ${t_{n}}\rightarrow T_{max}$ such that
\begin{equation*}
\|u(\cdot,t_{n})\|_{H^{2}\cap H^{1,1}}\leq M.
\end{equation*}
Then there exists $\delta>0$ and sufficiently large $n$ such that $t_n+\delta>T_{max}$ and the solution $u(x,t)$ exists on $[0,t_n+\delta]$, which contradicts the fact that $T_{max}$ is the maximal existence time.

Let $u_{0}, \tilde{u}_{0} \in H^{2}(\mathbb{R})\cap H^{1,1}(\mathbb{R})$ satisfy $\left\|u_{0}\right\|_{H^{2}\cap H^{1,1}}, \left\|\tilde{u}_{0}\right\|_{H^{2}\cap H^{1,1}}\leq U$ for some $U>0$. Define
\begin{align*}
d(u(x,t), \tilde{u}(x,t)):=\sum_{n=1}^{\infty}\frac{\|u-\tilde{u}\|_{n}}{2^{n}(1+\|u-\tilde{u}\|_{n})},
\end{align*}
where
\begin{equation*}
\left\|u(x,t)\right\|_{n}=\left\|u(x,t)\right\|_{C([0,n],H^{2}\cap H^{1,1})},\ \  n\in\mathbb{N}.
\end{equation*}
Lipschitz continuity \eqref{5.7} shows that
\begin{align*}
d(u(x,t), \tilde{u}(x,t))&\leq \sum_{n=1}^{\infty}\frac{C(U,n)\left\|u_{0}-\tilde{u}_{0}\right\|_{H^2\cap H^{1,1}}}{2^{n}(1+C(U,n)\left\|u_0-\tilde{u}_0\right\|_{H^2\cap H^{1,1}})}\\
&\leq \sum_{n=1}^{\infty}\frac{C(U,n)}{2^{n}}\left\|u_0-\tilde{u}_0\right\|_{H^2\cap H^{1,1}}\\
&\leq C(U)\left\|u_0-\tilde{u}_0\right\|_{H^2\cap H^{1,1}}.
\end{align*}
Thus the proof of Theorem \ref{t1} is achieved.

    \vspace{5mm}
    \noindent\textbf{Acknowledgements}

    This work is supported by  the National Natural Science
    Foundation of China (Grant No. 11671095,51879045).\vspace{2mm}

    \noindent\textbf{Data Availability Statements}

    The data which support the findings of this study are available within the article.\vspace{2mm}

    \noindent\textbf{Conflict of Interest}

    The authors have no conflicts to disclose.\vspace{5mm}

\noindent\textbf{References}

\end{document}